    \def\independenT#1#2{\mathrel{\setbox0\hbox{$#1#2$}%
    \copy0\kern-\wd0\mkern4mu\box0}}
\newcommand{\el}{{ ~~\it Example: \ }}
\newcommand{\mat}[1]{\boldsymbol{#1}}
\title{Optimal Server Assignment in Multi-Server Queueing Systems with Random Connectivities}  
\author{Hassan~Halabian,~\IEEEmembership{Student Member,~IEEE,}
        Ioannis~Lambadaris,~\IEEEmembership{Member,~IEEE,}
        Yannis Viniotis,
        and~Chung-Horng~Lung,~\IEEEmembership{Member,~IEEE,}
\thanks{H. Halabian, I. Lambadaris, C-H Lung are with the Department of Systems and
Computer Engineering, Carleton University, Ottawa, ON, K1S 5B6 Canada (e-mail:
hassanh@sce.carleton.ca; ioannis@sce.carleton.ca; chlung@sce.carleton.ca).

Y. Viniotis is with the Department of Electrical and Computer Engineering,
North Carolina State University, Raleigh, North Carolina (e-mail:
candice@ncsu.edu).

}\vspace{-2mm}
}
\begin{document}             
\maketitle                   
\newtheorem{theor}{Theorem}
\newtheorem{lem}{Lemma}
\newtheorem{pro}{Proposition}
\newtheorem{defin}{Definition}
\newtheorem{conj}{Conjecture}
\newtheorem{cor}{Corollary}

\IEEEpeerreviewmaketitle
\begin{abstract}
We study the problem of assigning $K$ identical servers to a set of $N$ parallel queues in a time-slotted queueing system. The \textit{connectivity} of each queue to each server is randomly changing with time; each server can serve \textit{at most one} queue and each queue can be served by \textit{at most one} server during each time slot. Such a queueing model has been used in addressing resource allocation  problems in wireless networks. It has been previously proven that Maximum Weighted Matching (MWM) is a \textit{throughput-optimal} server assignment policy for such a queueing system. In this paper, we prove that for a system with i.i.d. Bernoulli packet arrivals and connectivities, MWM minimizes, in stochastic ordering sense, a broad range of cost functions of the queue lengths such as total queue occupancy (which implies minimization of average queueing delays). Then, we extend the model by considering imperfect services where it is assumed that the service of a scheduled packet fails randomly with a certain probability. We prove that the same policy is still optimal for the extended model. We finally show that the results are still valid for more general connectivity and arrival processes which follow conditional permutation invariant distributions.  


 
\vspace{-2mm}
\end{abstract}

\section{Introduction}
\IEEEPARstart{O}{ptimal} stochastic control is one of the main objectives in the design of emerging wireless networks. One of the primary goals in stochastic control and optimization of wireless networks is to distribute the shared resources in the physical (e.g., power) and MAC layers (e.g., radio interfaces, relay stations and orthogonal sub-channels) among multiple users such that certain stochastic performance attributes are optimized.
While various performance criteria including the stable throughput region, power consumption and utility functions of the admitted traffic rates have been studied in several papers \cite{Tassiulas92,Tassiulas93,sarkar,Now,mckeown,Tassiulas97,Tassiulas98,Leonardi,Lin06atutorial,bambos2002,Tsibonis2004, Luo1999,Pappas2010, Naware2005, hassan_isit10,hassan_isit11,hassan-milcom2010,v3,v4}, average queueing delay has received less attention.
The inherent randomness in wireless channels makes delay-optimal resource allocation a challenging problem in wireless networks.

In this paper, we focus   on delay-optimal server assignment in a time-slotted, multi-queue, multi-server system with random connectivities. Random connectivities can model unreliable and randomly varying wireless channels. Our queueing model can be applied to study resource allocation in wireless access networks where the wireless users are modeled by the queues; the shared resources are modeled by the servers and the wireless channels are modeled by the random connectivities between the queues and the servers.
Although this model is a simplified representation of a real wireless system, nevertheless it does provide valuable intuition for the performance optimization of real systems. Similar modeling approaches have already appeared in \cite{Tassiulas93,bambos95,bambos2002,Koole2001,javidi2,hassan_isit10,hassan_isit11,ganti,sarkar,hassan-milcom2010}.

\vspace{-2mm}
\subsection{Related Work and Our Contributions}\label{relatedwork}

The problem of \textit{throughput-optimal} server allocation in \textit{multi-queue, single-server} systems with \textit{random connectivities} was addressed in \cite{Tassiulas93,bambos95,bambos2002,Koole2001}. 
In \cite{Tassiulas93}, the authors considered a time-slotted, multi-queue single-server system with Bernoulli packet arrivals and connectivities from each of the queues to a single server. They introduced LCQ (Longest Connected Queue) policy as a throughput-optimal policy and also characterized the stability region by a set of linear inequalities.
The authors in \cite{bambos95} considered a continuous-time version of the model studied in \cite{Tassiulas93} with finite buffer space and showed that under stationary ergodic input job flow and modulation
processes, LCQ policy maximizes the stable throughput region of this system. In \cite{bambos2002}, C-FES (Connected queue with the Fewest Empty Spaces) policy, a policy that allocates the server to the connected queue with the
fewest empty spaces, was introduced for this system. It was shown that C-FES stochastically minimizes the loss flow and maximizes the throughput of the system.
In \cite{Koole2001}, a model similar to the model of \cite{bambos2002,Tassiulas93} was studied   and it was shown that the Best User (BU) policy maximizes the expected discounted number of successful transmissions.



While in throughput-optimal server allocation the objective is to find a policy that maximizes the throughput region of the system and keeps the queues stable \cite{Tassiulas92,Now}, in \textit{delay-optimal} server allocation the goal is to determine a policy that minimizes the average queueing delay. Thus, the objective in delay optimality is more stringent than the objective in throughput optimality. A server allocation policy may be throughput-optimal but not delay-optimal; however, a delay-optimal policy (for all the arrival rates) is always throughput-optimal. 
In \cite{Tassiulas93}, the authors (other than proving the throughput optimality of LCQ as mentioned earlier) proved that for a \textit{multi-queue, single-server} system with i.i.d. Bernoulli arrival and connectivity processes, the LCQ policy 
is also delay-optimal. The extension of this result for non-i.i.d. case is still an open problem.

In generalizing the results to \textit{multi-queue, multi-server} (MQMS) systems, various multi-server systems have been studied \cite{javidi2,hassan_isit10,hassan_isit11,sarkar,ganti,javidi,hussein}. 
In \cite{javidi2}, Maximum Weight (MW) policy was proposed as a throughput-optimal server allocation policy for an MQMS queueing system with general, stationary channel processes. MW policy can be considered as a special case of back-pressure algorithm which was proven in \cite{Tassiulas92,Now} to be a throughput-optimal resource allocation algorithm in a general queueing system.
In \cite{hassan_isit10}, the authors characterized the network stability region of multi-queue, multi-server systems with \textit{time-varying, independent connectivities}. 
The results were further extended in \cite{hassan_isit11} for more general, stationary channel distributions (and not just independent Bernoulli channels). 
In all the models studied in \cite{javidi2,hassan_isit10,hassan_isit11}, there is no restriction on the number of servers that can be allocated to a queue. For ease of reference, we will call such an MQMS system as MQMS-Type1.
In \cite{sarkar}, it was shown that for an MQMS system in which \textit{the queues are restricted to get service from at most one server during each time slot}, Maximum Weighted Matching (MWM) policy is throughput-optimal. For ease of reference, we will call such an MQMS system with this extra assumption as MQMS-type2.  The authors also considered the effect of infrequent channel state measurements on the network stability region of MQMS systems. Similar to MQMS-Type1, for MQMS-Type2 the throughput-optimal policy (MWM) can be considered as a special case of the back-pressure algorithm.

In contrast to the single-server system (where LCQ was both throughput-optimal and delay-optimal), in MQMS-Type1 system the MW policy is not necessarily delay-optimal. More specifically, in \cite{hassan_isit10} it was also shown that although MW policy is throughput-optimal, even for a system with i.i.d. Bernoulli arrivals and connectivity processes, MW policy in its general form, is not delay-optimal.





The delay-optimal server allocation problem in multi-server systems was addressed in \cite{ganti,javidi,hussein}. The authors in \cite{ganti} considered a queueing model with a set of parallel queues and i.i.d. Bernoulli packet arrivals that are competing to attract service from $K$ identical servers forming a \textit{server-bank}. The connectivities of the queues to the \textit{entire} server-bank are assumed to be i.i.d. Bernoulli processes. Each queue is restricted to receive service from \textit{at most one server during each time slot}. The authors proposed LCQ policy in which the servers of the server-bank are allocated to the $K$ longest connected queues at each time slot. Using dynamic coupling and stochastic ordering, they proved the delay optimality of LCQ policy for such a system. In our work, the focus would be on delay optimality of MWM policy in MQMS-Type2 system in which the servers are not restricted to form a server-bank. Instead, we assume that each queue has an independent connectivity to each individual server (as seen in Figure \ref{mot}).
The work in \cite{javidi,hussein} focuses on delay optimal server allocation problem in the MQMS-Type1 system. In \cite{javidi}, the authors
introduced MTLB (Maximum-Throughput Load-Balancing) policy and using dynamic programming showed that this policy minimizes a class of cost functions including total average delay for the case of \textit{two queues} with  i.i.d., Bernoulli-distributed arrivals and connectivities. In \cite{javidi}, no general argument was provided for the optimality of MTLB for more than two queues.
The work in \cite{hussein} considers this problem for a general number of queues and servers. In \cite{hussein}, a class of \textit{Most Balancing} (MB) policies was characterized among all work-conserving policies which  minimize, in stochastic ordering sense, a class of cost functions including total queue occupancy (and thus are delay-optimal). However, this class of proposed MB policies is just \textit{characterized by a property of this class}; the authors did not introduce an \textit{explicit implementation} for the optimal policy.
In this paper, we focus on MWM policy and prove that this throughput-optimal policy \textit{is also delay-optimal} for an MQMS-Type2 system with i.i.d. arrival and connectivity processes. 
Our work extends the results derived in \cite{Tassiulas93,ganti}. In particular, the researchers in \cite{Tassiulas93,ganti} have considered queueing models where a \textit{single server} or a \textit{server-bank} is randomly connected to a set of parallel queues. In this paper, we consider a more general model where \textit{each individual server is randomly connected} to the queues (as seen in Figure \ref{mot}). 
Although the two models bear certain similarities, extending the results from single-server (server-bank) system to multi-server system is not a straightforward procedure. Our work is different from the work in \cite{Tassiulas93,ganti} from both the modeling power and the difficulty in proof points of view.
%
%
%

%

For more information on optimal scheduling and resource allocation problems in wireless networks the reader is encouraged to also consult with \cite{Now,Liu2003,Agrawal2002,atila,Stolyar2005,Lu1999}.




\begin{figure}[tp]
    \centering
    \psfrag{1}[][][.9]{$1$}
    \psfrag{2}[][][.9]{$2$}
    \psfrag{K}[][][.9]{$K$}
    \psfrag{j}[][][.9]{$\lambda$}
    \psfrag{f}[][][.9]{a) The model studied in \cite{Tassiulas93,ganti}}  
    \psfrag{r}[][][.9]{b) MQMS-Type2 system in this paper.}
    \psfrag{a}[][][1]{$p$}
    \psfrag{g}[][][1]{$X_1(t)$}
    \psfrag{h}[][][1]{$X_2(t)$}
    \psfrag{i}[][][1]{$X_N(t)$}
    \psfrag{k}[][][.9]{Single server or server-bank}
    \includegraphics[width=0.96\textwidth]{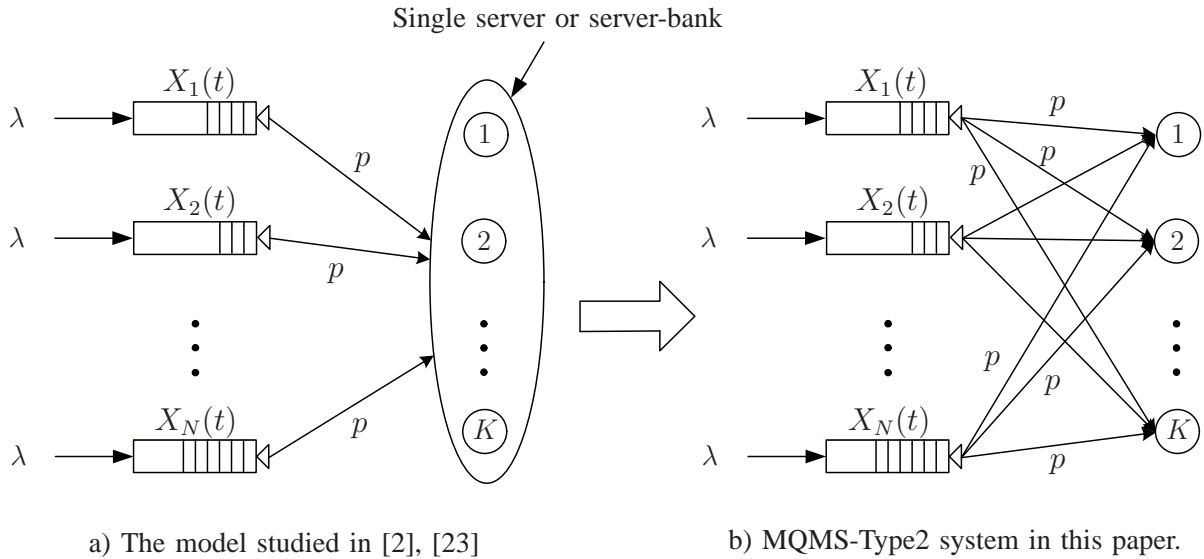}
	\caption{Previous models vs. our model. ($\lambda$ is the arrival probability and $p$ is the connectivity probability)}\vspace{-7mm}
	\label{mot}
\end{figure}


Our contributions in this paper are summarized as follows: 
First, for an MQMS-Type2 system 
we prove that during each time slot, Maximum Weighted Matching (MWM) policy will result in the most balanced queue vector in the system, i.e., \textit{maximization of the matching weight and balancing of the queues are equivalent}. Graph theoretic arguments were applied to prove this result that is formally introduced in Lemma \ref{l1} and Lemma \ref{l2} later in the paper. Note that our approach to prove this result is only applicable to the MQMS-Type2 model (due to the structure of the model and the MWM policy) and cannot be easily extended to MQMS-Type1 system.
Second, using this result in conjunction with the notions of stochastic ordering and dynamic coupling, we prove the delay optimality of MWM policy for an MQMS-Type2 system with i.i.d. Bernoulli arrivals and connectivities.
More specifically, we prove that MWM minimizes, in \textit{stochastic ordering} sense, a range of cost functions of queue lengths including total queue occupancy\footnote{The optimality of MWM is proven among all causal server assignment policies.}.
Third, we then extend our model by considering imperfect services where it is assumed that the service of a scheduled packet fails randomly with a certain probability. We prove that MWM is still optimal for the extended model. We finally show that the results are still valid for some more general connectivity and arrival processes which follow conditional permutation invariant distributions.

The rest of this paper is organized as follows. 
In Section \ref{modeldes}, we introduce the queueing model and the required notation. In Section \ref{background}, we describe the Maximum Weighted Matching (MWM) server assignment policy.
In Section \ref{asli}, we prove 
the delay optimality of MWM server assignment policy. 
In Section \ref{sim}, we present simulation results where we compare the performance of MWM policy with the performance of two other server assignment policies in terms of average total queue occupancy (or equivalently average queueing delay). Finally, we summarize our conclusions in Section \ref{conc}.



\section{Model Description}
\label{modeldes}

Throughout the paper, random variables are represented by CAPITAL letters and lower case letters are used to represent sample values of the random variables. Moreover, we use boldface font to represent matrices and vectors.

We consider a time-slotted, MQMS-Type2 system consisting of a set of parallel
queues $\mathcal{N}=\{1,2,\dots,N\}$ with infinite buffer space for each queue (see Figure \ref{model}). 
Packets in this system are assumed to have constant length and require one time slot to complete service. The service to this set of queues is provided by a set of identical servers $\mathcal{K}=\{1,2,\dots,K\}$. 
The connectivity of each queue $n\in \cal N$ to each server $k \in \cal K$ at each time slot $t$ is random and varying across time slots. We denote the connectivity of queue $n$ to server $k$ at time slot $t$ by $C_{n,k}(t) \in \{0,1\}$. 
When $C_{n,k}(t)=1$ ($C_{n,k}(t)=0$), queue $n$ is connected to (disconnected from) server $k$ at time slot $t$.
The connectivity variables $C_{n,k}(t)$ are assumed to be i.i.d. Bernoulli random variables with a fixed parameter $p$\footnote{The actual value of $p$ does not involve in our analysis. We only rely on the fact that the connectivities are i.i.d. Bernoulli processes.}.
 
At any time slot, each server can serve at most one packet from a connected, non-empty queue.
We do not allow server sharing in the system, i.e., a server can serve at most one queue per time slot. 
We also assume that \textit{at most one server} can be assigned to any connected queue during a time slot.
 
\begin{figure}[tp]
    \centering
    \psfrag{a}[][][.9]{$C_{1,1}(t)$}
    \psfrag{b}[][][.9]{$C_{1,2}(t)$}
    \psfrag{c}[][][.9]{$C_{1,K}(t)$}
    \psfrag{d}[][][.9]{$C_{N,1}(t)$}
    \psfrag{e}[][][.9]{$C_{N,2}(t)$}
    \psfrag{f}[][][.9]{$C_{N,K}(t)$}  
    \psfrag{g}[][][1]{$X_1(t)$}
    \psfrag{h}[][][1]{$X_2(t)$}
    \psfrag{i}[][][1]{$X_N(t)$}
    \psfrag{j}[][][1]{$A_1(t)$}
    \psfrag{k}[][][1]{$A_2(t)$}
    \psfrag{l}[][][1]{$A_N(t)$}
    \includegraphics[width=0.45\textwidth]{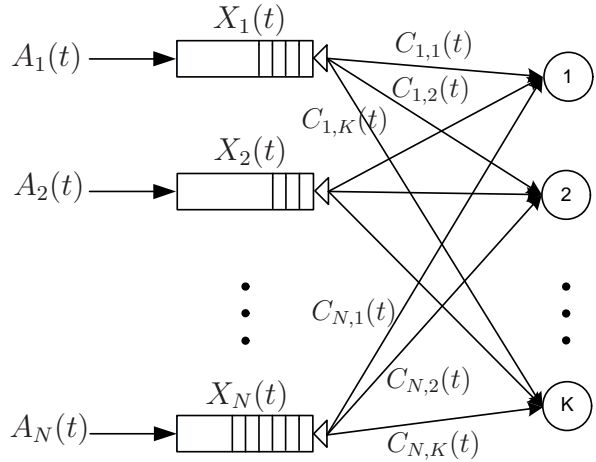}
	\caption{ Discrete-time MQMS-Type2 system with $N$ parallel queues and $K$ servers.}\vspace{-7mm}
	\label{model}
\end{figure}
%

Let $A_n(t)$ denote the number of packet arrivals to queue $n$ at time slot $t$. We assume that new arrivals at each time slot are added to the queues at the end of the time slot. The arrival variables $A_n(t)$ are assumed to be i.i.d. Bernoulli random variables with the same parameter $\lambda$ for all $n$ and $t$\footnote{The actual value of $\lambda$ does not involve in our analysis. We only rely on the fact that the arrivals are i.i.d. Bernoulli processes.}.
 
We denote the length of queue $n$ at the end of time slot $t$ (i.e., after adding the new arrivals) by $X_n(t)$. Hence, $X_n(t)$ represents the number of packets in the $n$th queue at the end of time slot $t$ (or beginning of time slot $t+1$). 




\subsection{Server Assignment Policy}
At each time slot $t$ the server assignment policy has to decide about a \textit{bipartite (graph) matching}\footnote{A matching in a bipartite graph is a sub-graph of the original graph in which no two edges share a common vertex.} between sets $\cal N$ and $\cal K$. We assume that this decision is made in a causal fashion, i.e., based on the available history of arrival processes, service processes, queue states and the connectivity states until time $t$.

A policy $\pi$ is fully determined by its indicator variables $M^{(\pi)}_{n,k}(t)~\forall n\in \mathcal{N}, \forall k\in \mathcal{K}, t=1,2,\dots$ which are defined as
\begin{eqnarray}
M^{(\pi)}_{n,k}(t) = \begin{cases} 1, & \mbox{if server $k$ is assigned to queue $n$ by policy $\pi$ at time slot $t$,} \\ 0, & \mbox{otherwise. }  \end{cases}
\end{eqnarray}
 
We define the $N\times K$ matrix $\mat{M}^{(\pi)}(t)=(M^{(\pi)}_{n,k}(t)), \forall n \in \mathcal{N},\forall k\in \mathcal{K}$ as the \textit{employed matching by policy $\pi$} at time slot $t$.
Hence, a server assignment policy $\pi$ can be defined as the set of all the employed matchings by policy $\pi$ at time slots $t=1,2,\dots$, i.e., $\pi=\{\mat{M}^{(\pi)}(t)\}_{t=1}^{\infty}$.
We denote the \textit{matching space} containing all the possible assignments of the servers to the queues by $\cal M$. 
The set $\mathcal{M}$ is equivalent to the set of all the possible matchings in an $N\times K$ \textit{complete bipartite graph}\footnote{A complete bipartite graph is a bipartite graph in which each vertex in each part is connected to all the vertices in the other part. An $N\times K$ bipartite graph has $NK$ edges.}.

We can observe that $X_n(t)$, the queue length random variable, evolves in time as follows:
\begin{eqnarray} \label{evolution}
X_n(t)= \left (X_n(t-1)-\displaystyle\sum_{k=1}^{K}C_{n,k}(t)M_{n,k}^{(\pi)}(t) \right)^+ + A_n(t)~~~~~~\forall n \in \cal N
\end{eqnarray}

The operator $(\cdot)^+$ returns the term inside the parentheses if it is non-negative and zero otherwise. 

The queueing model introduced in this section is useful in providing intuition for modeling  resource assignment problems in various systems with shared resources \cite{hassan-milcom2010,sarkar}. In wireless communication systems, resources such as communication sub-channels, relay stations, etc. are shared among users.
As an example, we can consider a relaying access network with $N$ users and $K$ shared relays. By modeling the cooperative wireless channel between each user, each relay and the base station as an erasure channel, the performance of such a system can be studied following our model in Figure \ref{model}.

 
\vspace{-2mm}
\section{Maximum Weighted Matching (MWM) Server Assignment Policy}  \label{background}




\subsection{MWM Optimization Problem}
In \cite{Tassiulas92,Now}, it was shown that \textit{back-pressure} algorithm maximizes the stable throughput region of a general data network, i.e., it is throughput-optimal. The reader may refer to \cite{Tassiulas92,Now} for more information about back-pressure algorithm. For the model introduced in Section \ref{modeldes}, the back-pressure algorithm reduces to the following optimization problem at each time slot $t$ \cite{sarkar}. In this integer programming problem, $M_{n,k}(t)$ variables are the optimization variables and $X_n(t-1) $ and $C_{n,k}(t)$ are known parameters.
\begin{eqnarray}\label{matching}
\texttt{Maximize:}_{\substack{\\ \\ \hspace{-2.2cm}\text{$M_{n,k}(t), \forall n,k$}}}&~ \displaystyle\sum_{n=1}^{N} X_n(t-1)\displaystyle\sum_{k=1}^{K} M_{n,k}(t)C_{n,k}(t)~~~~~~~ \nonumber \\
\texttt{Subject to:}&\displaystyle\sum_{k=1}^{K} M_{n,k}(t)\leq 1 ~~ (n=1,2, \dots ,N),~~ \nonumber\\
&\displaystyle\sum_{n=1}^{N} M_{n,k}(t)\leq 1 ~~ (k=1,2,\dots ,K), ~~
\nonumber\\
&~~~~~~~~~~~~~~~~~~~M_{n,k}(t)\in \{0,1\} ~~~ (k=1,2, \dots ,K),(n=1,2, \dots ,N)
\end{eqnarray} 
Finding the solution of problem (\ref{matching}) is equivalent to finding a maximum weighted matching in the $N \times K$ bipartite graph $G_t=(\mathcal{N}, \mathcal{K},\mathcal{E})$ shown in Figure \ref{matching-graph}. 
Hence, the back-pressure algorithm for the queueing model of Figure \ref{model} is also known as 
Maximum Weighted Matching (MWM) algorithm. In $G_t$, $\mathcal{N}$ and $\mathcal{K}$ are the two sets of vertices in each part of the graph and $\mathcal{E}=\{e_{n,k},\forall n \in \mathcal{N}, \forall k \in \mathcal{K}\}$ is the set of edges between these two parts. 
In $G_t$, the associated weight to each edge $e_{n,k}$ is $ X_n(t-1)C_{n,k}(t) $. A matching in graph $G_t$ is a sub-graph of $G_t$ in which no two edges share a common vertex. Any matching $\mat{M}^{(\pi)}(t)$ at any time slot $t$ is corresponding to a sub-graph of $G_t$ namely $G_t^{(\pi)}=(\mathcal{N},\mathcal{K}, \mathcal{E}^{(\pi)})$ in which  $e_{n,k} \in \mathcal{E}^{(\pi)}$ if and only if $M^{(\pi)}_{n,k}(t)=1$.
There are several algorithms to find the maximum weighted matching in bipartite graphs. The most well-known one is the Hungarian algorithm with    $O((\min\{N,K\})(\max\{N,K\})^2)$ complexity \cite{hungarian}.
\subsection{MWM Policy}
Assume that $\mat{M}^{(\text{MWM})}(t)=(M^{(\text{MWM})}_{n,k}(t))~ \forall n \in \mathcal{N},\forall k\in \mathcal{K}$ is the matching whose indicator variables are the solution of the optimization problem (\ref{matching}). $\mat{M}^{(\text{MWM})}(t)$ has the following properties: 
\begin{itemize}
\item[\textit{(a)}] $\mat{M}^{(\text{MWM})}(t)$ always exists at all time slots.
\item[\textit{(b)}]  The maximum weighted matching in a bipartite graph \textit{may not be unique}, i.e., there may be more than one matching $\mat{M}^{(\text{MWM})}(t)$ for the graph of Figure \ref{matching-graph} at each time slot.
\end{itemize}

\begin{figure}[tp]
    \centering
    \psfrag{a}[][][.9]{$1$}
    \psfrag{b}[][][.9]{$2$}
    \psfrag{N}[][][.9]{$N$}
    \psfrag{c}[][][.9]{$1$}
    \psfrag{d}[][][.9]{$2$}
    \psfrag{K}[][][.9]{$K$}  
    \psfrag{e}[][][.85]{$X_1(t-1)C_{1,1}(t)$}
    \psfrag{f}[][][.85]{$X_1(t-1)C_{1,2}(t)$}
    \psfrag{g}[][][.85]{$X_2(t-1)C_{2,2}(t)$}
    \psfrag{h}[][][.85]{$X_2(t-1)C_{2,K}(t)$}
    \psfrag{i}[][][.85]{$X_N(t-1)C_{N,1}(t)$}
    \psfrag{j}[][][.85]{$X_N(t-1)C_{N,2}(t)$}
    \psfrag{l}[][][.85]{$X_N(t-1)C_{N,K}(t)$}
    \includegraphics[width=.4\textwidth]{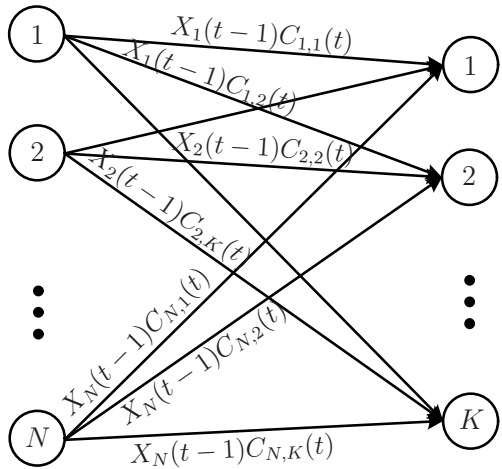}
    \caption{Bipartite graph for the Maximum Weighted Matching (MWM) policy.} \vspace{-7mm}
    \label{matching-graph}
\end{figure}




\begin{defin}
A Maximum Weighted Matching (MWM) server assignment policy is defined as a policy that employs maximum weighted matching $\mat{M}^{(\text{MWM})}(t)$ at all time slots, i.e., $\pi^{(\text{MWM})}=\{\mat{M}^{(\text{MWM})}(t)\}_{t=1}^{\infty}$.
\end{defin}
An MWM policy at each time slot observes the queue lengths $X_n(t-1)$ and the connectivity variables $C_{n,k}(t)$ and determines a maximum weighted matching (the matching indicator variables) in the bipartite graph of Figure \ref{matching-graph}. Note that, by construction, the MWM  policy is causal. 


\begin{defin}
We denote the set of all policies that employ maximum weighted matching at all time slots by $\Pi^\text{MWM}$. 
\end{defin}

According to  property  \textit{(a)}  above, the set $\Pi^\text{MWM}$ is not empty. Moreover, according to property \textit{(b)}, we conclude that $\Pi^\text{MWM}$ may contain an infinite number of policies.



\section{Delay Optimality of MWM Policy}   \label{asli}  


In this section, we  prove the delay optimality of an MWM policy $\pi \in \Pi^{\text{MWM}}$. This result is formally presented in Theorem \ref{th2}. 
More specifically, we show that in an MQMS-Type2 system with i.i.d. Bernoulli arrival and connectivity processes, any MWM policy is optimal in minimizing, \textit{in stochastic ordering sense}, a class of cost functions of queue length processes including average queueing delay. For brevity we will use the term ``\textit{delay optimality}'' to refer to the optimality of MWM in this sense. 


\vspace{-3mm}
\subsection{Delay Optimality of MWM Policy-Outline of the Proof}
\noindent The proof of Theorem \ref{th2} proceeds along the following steps:

\begin{figure}[tp]
    \centering
    \psfrag{b}[][][.9]{$\mathsf{MW}$ index}
    \psfrag{x}[][][.9]{Maximization}
    \psfrag{a}[][][.9]{Queue}
    \psfrag{n}[][][.9]{Balancing}
    \psfrag{c}[][][.75]{Lemma \ref{l1}}
    \psfrag{d}[][][.75]{Lemma \ref{l2}}
    \psfrag{f}[][][.84]{Policy Improvement}
    \psfrag{y}[][][.84]{(Delay Reduction)}
    \psfrag{g}[][][.75]{Lemma \ref{l3}}
    \psfrag{h}[][][.9]{for any $\pi \notin \Pi^{\text{MWM}}$}
    \psfrag{i}[][][.75]{Theorem \ref{th1}}
    \psfrag{j}[][][.9]{Delay-optimal policy}
    \psfrag{z}[][][.9]{belongs to $\Pi^{\text{MWM}}$}
    \psfrag{k}[][][.75]{All MWM policies result}
    \psfrag{p}[][][.75]{in the same average delay}
    \psfrag{l}[][][.8]{Lemmas \ref{lnew} and \ref{lperm}}
    \psfrag{m}[][][.8]{Any MWM policy}
    \psfrag{o}[][][.8]{is delay optimal}
    \includegraphics[width= 1\textwidth]{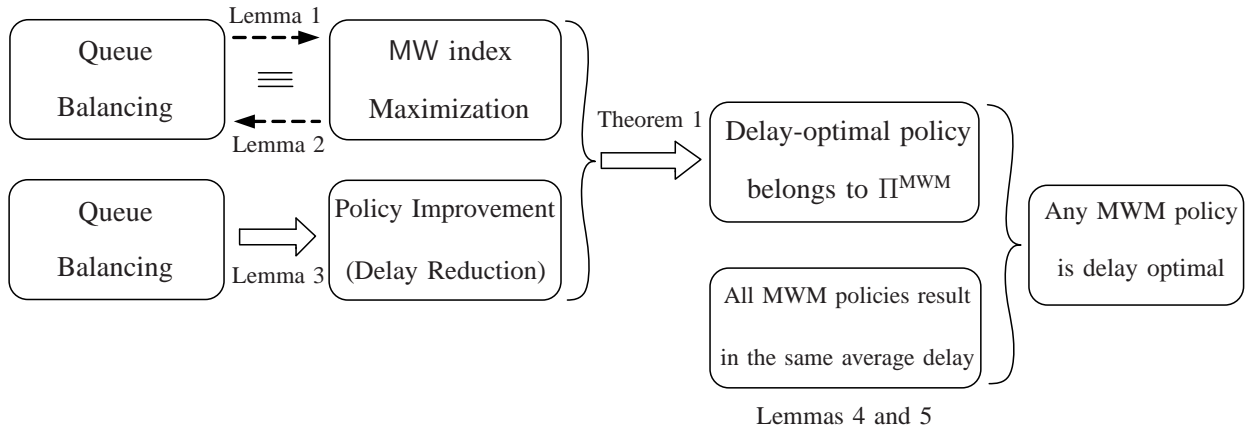}\vspace{-2mm}
    \caption{Outline of the proof} \vspace{-6mm}
    \label{proofroadmap}
\end{figure}

First, We will introduce the notion of \textit{balanced queue vectors} and the corresponding \textit{balancing server reallocation} at time slot $t$ in Definition \ref{balancing}. For any given  policy $\pi$ and a fixed time slot $t$, we will also define the \textit{Matching Weight} index $\mathsf{MW}_{\pi}(t)$ in Definition \ref{mindex}. Note that this index is not directly related to (average) delay; it is, however, a crucial link in comparing arbitrary policies to the MWM ones in the set $\Pi^\text{MWM}$ defined in the previous section. 
We show then in Lemmas \ref{l1} and \ref{l2} that the notions of ``maximizing the Matching Weight index" and ``producing balanced queue vectors" via balancing server reallocations are equivalent. This property allows us to characterize MWM policies as ones that produce the most balanced queue size vectors possible.

Second, we use the balanced queue size property to show in Lemma \ref{l3} that for any arbitrary policy $\pi$ outside the set $\Pi^\text{MWM}$, we may construct a policy in the set $\Pi^\text{MWM}$ that improves $\pi$ in terms of delay. In the words of Theorem \ref{th1}, we prove that the delay-optimal policy belongs to the set of MWM policies $\Pi^\text{MWM}$.

Third, in Lemmas \ref{lnew} and \ref{lperm} we will show that all policies in the set $\Pi^\text{MWM}$ result in the same cost (and hence average delay).
Finally, by using Theorem \ref{th1} and Lemma \ref{lnew} we conclude Theorem \ref{th2} where we show that the policies in the set $\Pi^\text{MWM}$ are all delay-optimal.
Graph theoretic analysis is applied in the proof of Lemmas \ref{l2} and \ref{lperm} and stochastic ordering and dynamic coupling arguments are used to prove Lemmas \ref{l3} and \ref{lnew}.

%
%

\vspace{-3mm}
\subsection{Equivalence of Queue Length Balancing and Maximum Weighted Matching}

We start this section by introducing the \textit{intermediate} queue state in the following definition.
\begin{defin}
Let $\mat{X}'(t)=(X'_1(t),X'_2(t),\dots,X'_N(t))$ denote the queue length vector at time slot $t$ exactly \textit{after serving the queues according to a server assignment policy $\pi$ and before adding the new arrivals} of time slot $t$, i.e.,
\begin{eqnarray} \label{xprim}
X'_n(t)= \left (X_n(t-1)-\displaystyle\sum_{k=1}^{K}C_{n,k}(t)M_{n,k}^{(\pi)}(t) \right)^+.
\end{eqnarray}
We call this vector as a the \textit{intermediate} queue state. Recall that the final state of queue $n$ at time slot $t$ is determined after adding the new arrivals.
\end{defin}

Given $\mat{x}'(t)$ as a sample value of random vector $\mat{X}'(t)$, we define a \textit{balancing server reallocation} at time slot $t$ as follows.
\begin{defin}\label{balancing}
Assume that the employed matching at time slot $t$ (assignment of servers to the queues at time slot $t$) will result in the intermediate queue vector $\mat{x}'(t)$. A balancing server reallocation
at this time slot is \textit{a new matching} resulting in intermediate vector $\tilde{\mat{x}}'(t)$ such that one of the following conditions is satisfied.
\begin{enumerate}[]
\item ($\textbf{C1}$) $\tilde{x}'_n(t) \leq x'_n(t)$ for all $n=1,2,\dots,N$ and there exists an $m \in \{1,2,\dots,N\}$ such that $\tilde{x}'_m(t) < x'_m(t)$.
\item($\textbf{C2}$) $\tilde{\mat{x}}'(t)$ and $\mat{x}'(t)$ are different in only two elements $n$ and $m$ such that $x'_n(t) <\tilde{x}'_n(t)\leq \tilde{x}'_m(t)< x'_m(t)$ and the following constraints are satisfied: $\tilde{x}'_n(t)=x'_n(t)+1$ and $\tilde{x}'_m(t)=x'_m(t)-1$.
\end{enumerate}
\end{defin}
A balancing server reallocation is a crucial tool in defining new policies that improve the delay performance of an arbitrary policy as we will see in the proof later.

\el Consider a system with three queues and three servers. Assume that $\mat{x}(t-1)=(3,2,5)$ is the queue length vector right at the end of time slot $t-1$ (or at the beginning of time slot $t$). We consider two distinct examples to show the definition of balancing server reallocations corresponding to each of the cases $\textbf{C1}$ and $\textbf{C2}$ in Definition \ref{balancing}. 
Figures \ref{examples:1} and \ref{examples:2} show these examples of balancing server reallocations. In each case, we also show the weight of each edge $(n,k)$ which is equal to $c_{n,k}(t)x_n(t-1)$. In these figures, since none of the queues is empty, the edges with weight $0$ are the ones which are disconnected. We have specified the original allocations by solid lines and the balancing ones by dashed lines.
For the  system in Figure \ref{examples:1}, the original allocation will result in the intermediate vector $\mat{x}'(t)=(3,1,4)$ while the balancing server reallocation will result in the intermediate vector $\tilde{\mat{x}}'(t)=(2,1,4)$. The vectors $\mat{x}'(t)$ and $\tilde{\mat{x}}'(t)$  satisfy Condition $\textbf{C1}$.
For the system in Figure \ref{examples:2}, the original allocation will result in the intermediate vector $\mat{x}'(t)=(2,1,5)$ while the balancing server reallocation will result in $\tilde{\mat{x}}'(t)=(3,1,4)$. The vectors $\mat{x}'(t)$ and $\tilde{\mat{x}}'(t)$  satisfy Condition $\textbf{C2}$.

\begin{figure}
  \centering
    \psfrag{a}[][][.85]{$x_1(t-1)=3$}
    \psfrag{b}[][][.85]{$x_2(t-1)=2$}
    \psfrag{c}[][][.85]{$x_3(t-1)=5$}
    \psfrag{d}[][][.85]{$1$}
    \psfrag{e}[][][.85]{$2$}
    \psfrag{f}[][][.85]{$3$}  
    \psfrag{g}[][][.85]{$0$}
    \psfrag{h}[][][.85]{$3$}
    \psfrag{i}[][][.85]{$2$}
    \psfrag{j}[][][.85]{$2$}
    \psfrag{k}[][][.85]{$5$}
    \psfrag{l}[][][.85]{$5$}   
    \psfrag{m}[][][.85]{$x_1(t-1)=3$}
    \psfrag{n}[][][.85]{$x_2(t-1)=2$}
    \psfrag{o}[][][.85]{$x_3(t-1)=5$}
    \psfrag{p}[][][.85]{$3$}
    \psfrag{q}[][][.85]{$0$}
    \psfrag{r}[][][.85]{$2$}  
    \psfrag{s}[][][.85]{$2$}
    \psfrag{t}[][][.85]{$5$}
    \psfrag{u}[][][.85]{$0$}
    \psfrag{v}[][][.85]{$1$}
    \psfrag{w}[][][.85]{$2$}
    \psfrag{x}[][][.85]{$3$} 
  \subfloat[Satisfying condition \textbf{C1}]{\label{examples:1}\includegraphics[width=0.35\textwidth]{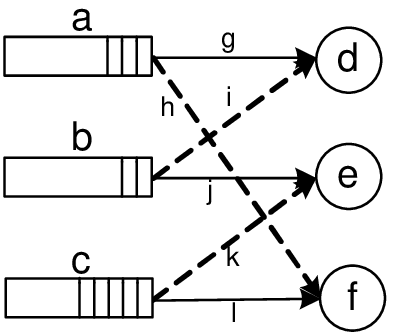}}              
  \subfloat[Satisfying condition \textbf{C2}]{\label{examples:2}\includegraphics[width=0.35\textwidth]{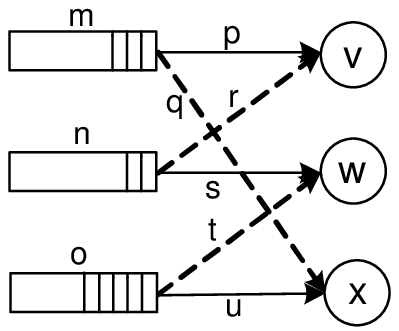}}
  \caption{Examples of balancing server reallocations (the weight $c_{n,k}(t)x_n(t-1)$ of each edge $(n,k)$ is also shown)}
  \label{examples}\vspace{-5mm}
\end{figure}


\begin{defin}\label{mindex}
For a server assignment policy $\pi$ with the allocation variables $\{M_{n,k}^{(\pi)}(t)\}_{t=1}^{\infty}$, $\forall k\in \cal K$ and $\forall n \in \cal N$, 
we define Matching Weight ($\mathsf{MW}$) index at time slot $t$ by 
\begin{eqnarray} 
\mathsf{MW}_{\pi}(t)=\displaystyle\sum_{n=1}^{N} X_n(t-1) \displaystyle\sum_{k=1}^{K}C_{n,k}(t) M_{n,k}^{(\pi)}(t). 
\end{eqnarray}
\end{defin}

$\mathsf{MW}$ index is exactly the objective of the optimization problem (\ref{matching}). $\mathsf{MW}_{\pi}(t)$ is an index associated with policy $\pi$ at time slot $t$ whose value is dependent on the state of the system (queue lengths and connectivities)  as well as the matching employed by policy $\pi$ at time slot $t$. 
In the following lemmas (Lemmas \ref{l1} and \ref{l2}), we relate the notions of balancing server reallocation and Matching Weight index and we prove that maximization of $\mathsf{MW}_{\pi}(t)$ index and balancing of the queues are equivalent. More specifically, we show that if the $\mathsf{MW}_{\pi}(t)$ index for policy $\pi$ is not maximized at time slot $t$ ($\pi$ is not using a maximum weighted matching), then there exists a balancing server reallocation (i.e., a new matching that satisfies either \textbf{C1} or 
\textbf{C2}) that results in a larger $\mathsf{MW}$ index. Furthermore, if $\pi$ is using a maximum weighted matching, then there exists no balancing server reallocation at that time slot, i.e., no matching can be found that satisfies either \textbf{C1} or \textbf{C2}. These facts are formally stated in the following two lemmas.

\begin{lem}\label{l1}
For a given policy $\pi$ employing matching $\mat{M}^{(\pi)}(t)$ at time slot $t$, by applying a balancing server reallocation at time slot $t$ (if there exists any), we can create a new policy $\tilde{\pi}$ (differing from $\pi$ only at time slot $t$) such that $\mathsf{MW}_{\pi}(t) < \mathsf{MW}_{\tilde{\pi}}(t)$.
\end{lem}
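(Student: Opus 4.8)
The plan is to analyze the two cases of Definition \ref{balancing} separately and show directly that a balancing server reallocation strictly increases the $\mathsf{MW}$ index. Fix the time slot $t$ and recall that $\mathsf{MW}_\pi(t) = \sum_{n=1}^N X_n(t-1) \sum_{k=1}^K C_{n,k}(t) M_{n,k}^{(\pi)}(t)$. The key observation is that the "served amount" of queue $n$ at time $t$, namely $s_n := \sum_{k=1}^K C_{n,k}(t) M_{n,k}^{(\pi)}(t) \in \{0,1\}$ (since at most one server is assigned to any queue), relates the pre-service length $x_n(t-1)$ and the intermediate length by $x'_n(t) = (x_n(t-1) - s_n)^+$. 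So a queue $n$ that is actually served ($s_n = 1$) and non-empty satisfies $x'_n(t) = x_n(t-1) - 1$, and one that is not served (or is empty) satisfies $x'_n(t) = x_n(t-1)$. Thus $\mathsf{MW}_\pi(t) = \sum_{n : s_n = 1} x_n(t-1)$ summed over the non-empty served queues, i.e. it equals the sum of the pre-service lengths of exactly those queues whose length actually drops by one under $\pi$.

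First I would handle Case \textbf{C1}. Here $\tilde{x}'_n(t) \le x'_n(t)$ for all $n$ with strict inequality for some $m$. Since each coordinate can change by at most one per step (a queue's length decreases by at most one, as at most one server serves it), and since $x'_n(t) \in \{x_n(t-1), x_n(t-1)-1\}$ for the original matching while $\tilde x'_n(t) \in \{x_n(t-1), x_n(t-1)-1\}$ for the new one, the inequality $\tilde x'_n(t) \le x'_n(t)$ forces: wherever $\pi$ drops queue $n$, so does $\tilde\pi$, and additionally $\tilde\pi$ drops queue $m$ where $\pi$ did not. Hence the set of non-empty served queues under $\tilde\pi$ strictly contains that under $\pi$ (it gains $m$), so $\mathsf{MW}_{\tilde\pi}(t) = \mathsf{MW}_\pi(t) + x_m(t-1) > \mathsf{MW}_\pi(t)$, using $x_m(t-1) > 0$ (which holds because $m$ is served and its length strictly decreases).

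Next I would handle Case \textbf{C2}, where $\tilde{\mat x}'(t)$ and $\mat x'(t)$ differ only at coordinates $n,m$ with $x'_n(t) < \tilde x'_n(t) \le \tilde x'_m(t) < x'_m(t)$, $\tilde x'_n(t) = x'_n(t)+1$, $\tilde x'_m(t) = x'_m(t) - 1$. Translating back: under $\pi$, queue $n$ is served (its length drops) and queue $m$ is not; under $\tilde\pi$, queue $m$ is served and queue $n$ is not; all other queues are served identically. Therefore $\mathsf{MW}_{\tilde\pi}(t) - \mathsf{MW}_\pi(t) = x_m(t-1) - x_n(t-1)$. From $x'_n(t) < x'_m(t)$ together with $x'_n(t) = x_n(t-1)-1$ (queue $n$ non-empty and served under $\pi$) and $x'_m(t) = x_m(t-1)$ (queue $m$ not served under $\pi$), I get $x_n(t-1) - 1 < x_m(t-1)$, hence $x_n(t-1) \le x_m(t-1)$. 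To get the strict inequality $x_n(t-1) < x_m(t-1)$, I would use the chain $x'_n(t) < \tilde x'_n(t) \le \tilde x'_m(t) < x'_m(t)$ which gives $x'_n(t) \le x'_m(t) - 2$, i.e. $x_n(t-1) - 1 \le x_m(t-1) - 2$, so $x_n(t-1) + 1 \le x_m(t-1)$ and thus $x_m(t-1) - x_n(t-1) \ge 1 > 0$. Hence $\mathsf{MW}_{\tilde\pi}(t) > \mathsf{MW}_\pi(t)$.

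The main obstacle I anticipate is not any single calculation but bookkeeping the feasibility of the reallocation: I must confirm that the new intermediate vector $\tilde{\mat x}'(t)$ asserted in Definition \ref{balancing} is actually realizable by some legitimate matching $\mat M^{(\tilde\pi)}(t)$ (respecting the connectivity constraints $C_{n,k}(t)$ and the one-server-per-queue / one-queue-per-server restrictions), so that the difference $\mathsf{MW}_{\tilde\pi}(t) - \mathsf{MW}_\pi(t)$ is computed between two genuine policies. By hypothesis of the lemma we are \emph{given} that a balancing server reallocation exists, so realizability is part of the assumption; the real work is just to make the "served/not served" translation precise and to verify the arithmetic above in both cases. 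Once that dictionary is set up, both cases reduce to the one-line computations shown, and $\tilde\pi$ is defined to agree with $\pi$ at every time slot other than $t$, completing the proof.
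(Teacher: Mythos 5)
Your proposal is correct and follows essentially the same route as the paper's proof: split on conditions \textbf{C1} and \textbf{C2}, translate the change in the intermediate vector into which queues gain or lose service, and compute the resulting change in the $\mathsf{MW}$ index directly (your explicit derivation of $x_m(t-1)-x_n(t-1)\ge 1$ from the chain of inequalities in \textbf{C2} is a welcome detail the paper leaves implicit). The only nitpick is in \textbf{C1}: more than one queue may gain service, so the identity $\mathsf{MW}_{\tilde\pi}(t)=\mathsf{MW}_\pi(t)+x_m(t-1)$ should be a ``$\ge$'' in general, which does not affect the conclusion.
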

The detailed proof of the lemma is given in Appendix \ref{l12}.
Based on Lemma \ref{l1}, we can state the following corollary.
\cor \label{cor1}
For a given policy $\pi$ at time slot $t$, if $\mathsf{MW}_{\pi}(t)$ is maximized, i.e., policy $\pi$ employs a maximum weighted matching at time slot $t$, then there exists no balancing server reallocation at that time slot.

Lemma \ref{l1} states that any balancing server reallocation \textit{strictly} increases the matching weight index. However, \textit{it does not imply the existence} of a balancing server reallocation when $\mathsf{MW}_{\pi}(t)$ is not maximized. In the following, we  prove the existence result i.e., the inverse of Lemma \ref{l1}.
\begin{lem}\label{l2}
For a given policy $\pi$ at time slot $t$, if $\mathsf{MW}_{\pi}(t)$ is not maximized, i.e., if $\mathsf{MW}_{\pi}(t)<\mathsf{MW}_{\text{MWM}}(t)$, then there exists a balancing server reallocation at that time slot.
\end{lem}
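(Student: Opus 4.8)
The plan is to establish this existence result by an alternating-path argument on the bipartite graph $G_t$. Fix the time slot and abbreviate $x_n := X_n(t-1)$, $c_{n,k} := C_{n,k}(t)$; write $w(H) := \sum_{n,k} x_n c_{n,k} H_{n,k}$ for the weight of a matching $H$, so that $\mathsf{MW}_\pi(t) = w(\mat{M}^{(\pi)}(t))$. Let $M := \mat{M}^{(\pi)}(t)$ and let $M^*$ be any maximum weighted matching, so $w(M) < w(M^*)$. First I would normalize: delete from $M$ every assigned edge $e_{n,k}$ with $c_{n,k}=0$ or $x_n=0$, and likewise delete all weight-$0$ edges from $M^*$. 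Neither deletion changes a matching's weight or the intermediate queue vector it produces (such an edge serves no packet), and $M^*$ remains a maximum weighted matching; since a ``balancing server reallocation'' in Definition \ref{balancing} is defined purely through the intermediate vector, it suffices to exhibit one relative to the normalized matchings. After normalizing, both $M$ and $M^*$ assign servers only to connected, non-empty queues; writing $S(H)\subseteq\mathcal{N}$ for the set of queues served by such an $H$, we have $w(H)=\sum_{n\in S(H)}x_n$, and the intermediate length of queue $n$ equals $x_n-1$ if $n\in S(H)$ and $x_n$ otherwise.

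Next I would form the symmetric difference $D=M\triangle M^*$ and decompose it into vertex-disjoint alternating paths and even cycles. Since $M$ and $M^*$ agree off $D$, the surplus $w(M^*)-w(M)$ equals the sum over components $P$ of $w(M^*\cap P)-w(M\cap P)$. The key point is that after normalization every edge of $D$ has weight equal to the length of its queue endpoint, so along any component this difference collapses to its endpoint terms: an \emph{internal} queue vertex is matched inside $P$ by both $M$ and $M^*$ and contributes $x_n-x_n=0$, and server vertices contribute nothing. Hence even cycles and paths with two server endpoints contribute $0$; a path with exactly one queue endpoint $q$ contributes $+x_q$ when the unique $D$-edge at $q$ lies in $M^*$ and $-x_q$ when it lies in $M$; and a path with two queue endpoints $q,q'$ — which has an even number of edges, so its two end-edges lie in different matchings — contributes $x_q-x_{q'}$, where $q$ denotes the endpoint whose end-edge lies in $M^*$. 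Since the surplus is strictly positive, some component $P$ contributes strictly positively, and by this classification $P$ is a path possessing a queue endpoint $q$ whose $D$-edge lies in $M^*\setminus M$ (hence $q\notin S(M)$), with the other endpoint of $P$ being either (i) a server, or (ii) a queue $q'$ with $q'\in S(M)$ and $x_{q'}<x_q$.

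Finally I would set $\tilde{M}:=M\triangle P$. This is again a matching, it coincides with $M$ outside $V(P)$, it leaves every internal queue vertex of $P$ served (each keeps a connected, assigned edge), it adds $q$ to the served set, and in case (ii) it removes $q'$. In case (i), $S(\tilde{M})=S(M)\cup\{q\}$ with $x_q\ge 1$, so the new intermediate vector $\tilde{\mat{x}}'(t)$ satisfies $\tilde{x}'_n(t)\le x'_n(t)$ for every $n$, with strict inequality at $n=q$: this is \textbf{C1}. In case (ii), $S(\tilde{M})=(S(M)\setminus\{q'\})\cup\{q\}$, so $\tilde{\mat{x}}'(t)$ equals $\mat{x}'(t)$ except that queue $q'$ rises from $x_{q'}-1$ to $x_{q'}$ and queue $q$ falls from $x_q$ to $x_q-1$; since $x_{q'}\le x_q-1$ these two coordinates satisfy $x'_{q'}(t)<\tilde{x}'_{q'}(t)\le\tilde{x}'_q(t)<x'_q(t)$ with unit increments, which is \textbf{C2} (taking $n=q'$, $m=q$). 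In either case a balancing server reallocation exists at time slot $t$, as claimed.

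I expect the main obstacle to be the second step: verifying carefully that, once the matchings are normalized to connected non-empty edges, the weight change along each alternating component really is a telescoping sum in the queue lengths and so localizes to the endpoints, and then using the parity of bipartite paths to pin down exactly which endpoint configurations can yield a strictly positive contribution. Once that structural dichotomy is in place — paths with a server endpoint correspond to \textbf{C1}, paths with two queue endpoints to \textbf{C2} — checking the inequalities of Definition \ref{balancing} is routine, and the reduction to normalized matchings needs only the observation that balancing reallocations depend on the employed matching only through the intermediate queue vector it produces.
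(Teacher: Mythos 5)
Your proof is correct. It shares the paper's basic strategy --- compare $\mat{M}^{(\pi)}(t)$ with a maximum weighted matching by superimposing the two and then modify $\mat{M}^{(\pi)}(t)$ along one alternating component --- but the execution is genuinely different and, in my view, cleaner. The paper first \emph{adds} dummy vertices and zero-weight edges so that both matchings become perfect; their union then decomposes into even cycles only, a negative cycle must exist, and the argument proceeds by classifying edge pairs into types \textbf{T0}/\textbf{T1}/\textbf{T2} and running a case analysis on where these types occur around the cycle (including a sub-case that must be iterated when two queue lengths coincide). You instead \emph{delete} the zero-weight edges, so the symmetric difference decomposes into paths as well as cycles, and your telescoping observation --- that after normalization every edge's weight equals its queue endpoint's length, so internal queue vertices cancel and the weight change of each component localizes to its path endpoints --- replaces the paper's edge-pair classification entirely. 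The bipartite parity argument then pins down the only two endpoint configurations that can contribute positively, and these map exactly onto \textbf{C1} (server endpoint) and \textbf{C2} (second queue endpoint with strictly smaller length), with no recursion needed. The supporting details you flag as the main obstacle all check out: removing weight-zero edges changes neither the matching weight nor the intermediate vector (including the case of a connected server assigned to an empty queue, where $(0-1)^+=(0-0)^+$); an endpoint of a component whose unique difference-edge lies in $M^*\setminus M$ is necessarily unmatched in $M$, so $M\triangle P$ is again a matching; and the inequality $x_{q'}<x_q$ extracted from the positive contribution is precisely what \textbf{C2} requires. What the paper's version buys is uniformity (everything is a cycle); what yours buys is a shorter, non-recursive argument in which the strict positivity of $w(M^*)-w(M)$ does all the work.
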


For the detailed proof, please refer to Appendix \ref{l13}.
 
Using Lemmas \ref{l1} and \ref{l2}, we can conclude that maximizing the matching weight is equivalent to balancing the queues (in a sense that there is no further matching that can satisfy \textbf{C1} or \textbf{C2} in Definition \ref{balancing}). Hence, an MWM matching will result in the most balanced intermediate queue state where no balancing server reallocation is possible. This property of an MWM matching will be crucial in the proof of Lemma \ref{l3}.


\vspace{-2mm}
\subsection{Background on Stochastic Ordering and Dynamic Coupling}\label{sto}
In this section, we briefly review the concepts of stochastic ordering (stochastic dominance) and dynamic coupling techniques. These concepts are needed in the proof of delay optimality of MWM policy in the rest of our discussion. The reader is encouraged to consult  \cite{stoyan,ross,Lindvall} for more details about stochastic ordering and dynamic coupling.


\begin{defin}
Consider two real-valued, discrete-time stochastic processes $A=\{A(t)\}_{t=1}^{\infty}$ and $B=\{B(t)\}_{t=1}^{\infty}$ in $\mathbb{R}$. We say $A$ is stochastically smaller than $B$ and we write $A \leq_{st} B$ if $\Pr (A(t)>r) \leq \Pr(B(t)>r)$ \textit{for all} $t=1,2,\dots$ and \textit{all} $r \in \mathbb{R}$ \cite{stoyan,ross}. 
\end{defin}

The following two properties of stochastic ordering are useful: if $A\leq_{st} B$, then
\begin{itemize}
\item[(a)] $E[A(t)] \leq E[B(t)]$
\item[(b)] $f(A)\leq_{st} f(B)$ for all non-decreasing functions $f$.
\end{itemize}
Process $A$ is stochastically smaller than $B$, if there exists a process $\tilde{A}=\{\tilde{A}(t)\}_{t=1}^{\infty}$ defined on the same probability space as $B$, has the same probability distribution as $A$ and satisfies $\tilde{A}(t)\leq B(t)$ almost surely (a.s.) for every $t=1,2,\dots$ \cite{ganti}. The last statement is known as coupling of $A$ and $\tilde{A}$. When applying coupling technique, given the process $A$, we construct a \textit{coupled} process $\tilde{A}$ with the same distribution as $A$ and $\tilde{A}(t)\leq B(t)$ a.s. \textit{for all} $t$. This gives us a tool for comparing the processes $A$ and $B$ stochastically when it is infeasible to derive the distributions of $A$ and $B$ (e.g., in our queueing model when comparing the total occupancy process for different server assignment policies). 
\vspace{-3mm}
\subsection{Delay Optimality of MWM}
In this subsection, we will elaborate on proving the delay optimality of any MWM policy. We first introduce some definitions. We denote by $\mathbb{Z}_+$ the set of non-negative integers and by $\mathbb{Z}_+^N$ the $N$ dimensional Cartesian space of non-negative integers.
We define the relation ``$\preceq$'' on $\mathbb{Z}_+^N$ as follows.

\begin{defin} \label{def_order}
For two vectors $\mat{x}$ , $\tilde{\mat{x}} \in \mathbb{Z}_+^N$, we write $\tilde{\mat{x}} \preceq \mat{x}$ if one of the following relations holds:
\begin{enumerate}[]
\item \textbf{D1}: $\tilde{x}_n \leq x_n$ for all $n=1,2,\dots,N$.
\item \textbf{D2}: $\tilde{\mat{x}}$ is obtained by permutation of two distinct elements of $\mat{x}$, i.e., $\tilde{\mat{x}}$ and $\mat{x}$ are different in only two elements $n$ and $m$ such that $\tilde{x}_{n}=x_m$ and $\tilde{x}_{m}=x_n$. In this case, we say $\tilde{\mat{x}}$ and $\mat{x}$ are \textit{equal in permutation} and we write $\tilde{\mat{x}} \buildrel p\over = \mat{x}$.
\item \textbf{D3}: $\tilde{\mat{x}}$ and $\mat{x}$ are different in only two elements $n$ and $m$ such that $x_n <\tilde{x}_n\leq\tilde{x}_m< x_m$ and the following constraints are satisfied: $\tilde{x}_n=x_n+1$ and $\tilde{x}_m=x_m-1$.
\end{enumerate}
\end{defin}
The three relations \textbf{D1}, \textbf{D2} and \textbf{D3} are mutually exclusive. In \textbf{D3}, we say that $\tilde{\mat{x}}$ is more balanced than $\mat{x}$ and can be obtained by decreasing a larger element of $\mat{x}$ (i.e., $m$) by one and increasing a smaller element (i.e., $n$) by one. We call such an interchange as a \textit{balancing interchange} on vector $\mat{x}$. Thus, the result of a balancing interchange on a vector $\mat{x}$ would be a vector $\tilde{\mat{x}}$ such that $\tilde{\mat{x}} \preceq \mat{x}$. According to Definition \ref{balancing}, a balancing server reallocation satisfying Condition $\textbf{C2}$, will result in a balancing interchange between $\mat{x}'(t)$ and $\tilde{\mat{x}}'(t)$. 

We define the partial order ``$\preceq_p$'' on $\mathbb{Z}_+^N$ as the transitive closure of relation`` $\preceq$'' \cite{lidl}. In other words, $\tilde{\mat{x}} \preceq_p \mat{x}$ if and only if $\tilde{\mat{x}}$ is obtained from $\mat{x}$ by performing a sequence of 
reductions (i.e., reducing an element of the vector $\mat{x}$ such that $\mat{x}$ and $\tilde{\mat{x}}$ satisfy $\textbf{D1}$), permutations of two elements (permutation of two elements of the vector $\mat{x}$ such that $\mat{x}$ and $\tilde{\mat{x}}$ satisfy $\textbf{D2}$) and/or balancing interchanges (such that $\mat{x}$ and $\tilde{\mat{x}}$ satisfy $\textbf{D3}$). When $\mat{x}$ and $\tilde{\mat{x}}$ are two queue length vectors, we write $\tilde{\mat{x}} \preceq_p \mat{x}$ if and only if queue length vector $\tilde{\mat{x}}$ is obtained from $\mat{x}$ by applying a sequence of packet removals, two-queue permutations and balancing interchanges.

\begin{defin}
We define $\cal F$ as the class of real-valued functions on $\mathbb{Z}_+^N$ that are monotone and non-decreasing with respect to the partial order $\preceq_p$, i.e., 
\begin{eqnarray} \label{functions}
f\in \mathcal{F}~~\Longleftrightarrow~~\tilde{\mat{x}}\preceq_p \mat{x} \Rightarrow f(\tilde{\mat{x}}) \leq f(\mat{x}).
\end{eqnarray}
\end{defin}
We can easily check that function $f(\mat{x})= \sum_{n=1}^N x_n $ belongs to $\cal F$. This function represents the total queue occupancy of the system.

\begin{defin}
We define $\Pi_t, t=1,2,\dots,$ as the set of all policies that employ maximum weighted matching in every time slot $\tau=1,\dots,t$.
\end{defin}
 
We observe that $\Pi_{t-1} \supseteq \Pi_{t}$ and $ \Pi^\text{MWM}=\bigcap_{t=1}^{\infty}\Pi_t$.

Consider a policy $\pi \in \Pi_{t-1}$ which is using an arbitrary matching $\mat{M}^{(\pi)}(t)$ at time slot $t$. 
If $\mat{M}^{(\pi)}(t)$ is not a maximum weighted matching,
then from Lemmas \ref{l1} and \ref{l2} we conclude that by applying a sequence of balancing server reallocations\footnote{According to Lemma \ref{l1}, each balancing server reallocation \textit{strictly} increases the matching weight index.} we can create a policy $\pi^{\star} \in \Pi_t$. Let $h_t^{\pi}$ denote the number of balancing server reallocations required to convert the employed matching in policy $\pi$ at time slot $t$ to a maximum weighted matching.  
\begin{defin}
We define the \textit{distance} of policy $\pi \in \Pi_{t-1}$ from the set $\Pi_t$ to be $h_t^{\pi}$ balancing server reallocations.
\end{defin}


According to Lemmas \ref{l1} and \ref{l2}, since by applying each server reallocation, the matching weight index \textit{strictly} increases, the number of balancing server reallocations needed to convert $\pi$ to a maximum weighted matching is bounded, i.e., $h_t^{\pi}\le H <\infty$ for all $t,\pi$.
Hence, after applying the first balancing server reallocation at time slot $t$ we reach a policy $\tilde{\pi}_1$ whose distance from $\Pi_t$ is $h_t^{\pi}-1$ balancing server reallocations. By repeating this procedure we finally identify a policy whose distance to $ \Pi_t $ is zero, i.e., it belongs to $ \Pi_t $. Figure \ref{balancing_reallocation} illustrates the definition of the distance $h_t^{\pi}$ and how balancing server reallocations result in identifying a policy that employs a maximum weighted matching at time slot $t$. In this figure, $\mat{x}'_{\pi}(t)$ is the intermediate queue state due to the employed matching at time slot $t$ and $\mat{x}'_{\tilde{\pi}_1}(t),\mat{x}'_{\tilde{\pi}_2}(t),\dots,\mat{x}'_{\tilde{\pi}_{h_t^{\pi}}}(t)$ are the intermediate queue states after applying the balancing server reallocations.


\begin{defin}
By $\Pi_t^h$ ($0 \leq h \leq H$) we denote the set of all server assignment policies in $ \Pi_{t-1}$ whose distance from $\Pi_t$ is 
$h$ balancing server reallocations. 
\end{defin}
Recall that $\Pi_t^0=\Pi_t$. 


\begin{figure}[tp]
    \centering
    \psfrag{a}[][][.8]{employed matching $\pi$}
    \psfrag{b}[][][.8]{first balancing reallocation $\tilde{\pi}_1$}
    \psfrag{c}[][][.8]{second balancing reallocation $\tilde{\pi}_2$}
    \psfrag{d}[][][.8]{$h_t^{\pi}$th balancing reallocation $\tilde{\pi}_{h_t^{\pi}}$}
    \psfrag{e}[][][1.2]{$h_t^{\pi}$}
    \psfrag{f}[][][.85]{$\mathsf{MW}_{\pi}(t) < \mathsf{MW}_{\text{MWM}}(t)$}
    \psfrag{g}[][][.85]{$\mathsf{MW}_{\pi}(t)<\mathsf{MW}_{\tilde{\pi}_1}(t) < \mathsf{MW}_{\text{MWM}}(t)$}
    \psfrag{h}[][][.85]{$\mathsf{MW}_{\tilde{\pi}_1}(t)<\mathsf{MW}_{\tilde{\pi}_2}(t) < \mathsf{MW}_{\text{MWM}}(t)$}
    \psfrag{i}[][][.85]{$\mathsf{MW}_{\tilde{\pi}_{h_t^{\pi}}}(t) = \mathsf{MW}_{\text{MWM}}(t)$}
    \psfrag{W}[][][1]{$\mat{x}(t)$}
    \psfrag{t}[][][1]{time slot $t$}
    \psfrag{X}[][][1]{$\mat{x}(t-1)$}
    \psfrag{j}[][][1]{$\mat{x}'_{\pi}(t)~~\Longrightarrow$}
    \psfrag{k}[][][1]{$\mat{x}'_{\tilde{\pi}_1}(t)~\Longrightarrow$}
    \psfrag{l}[][][1]{$\mat{x}'_{\tilde{\pi}_2}(t)~\Longrightarrow$}
    \psfrag{m}[][][1]{$\mat{x}'_{\tilde{\pi}_{h_t^{\pi}}}(t)~\Longrightarrow$}
    \psfrag{v}[][][.7]{This balancing reallocation creates maximum weight matching}
    \includegraphics[width=.8\textwidth]{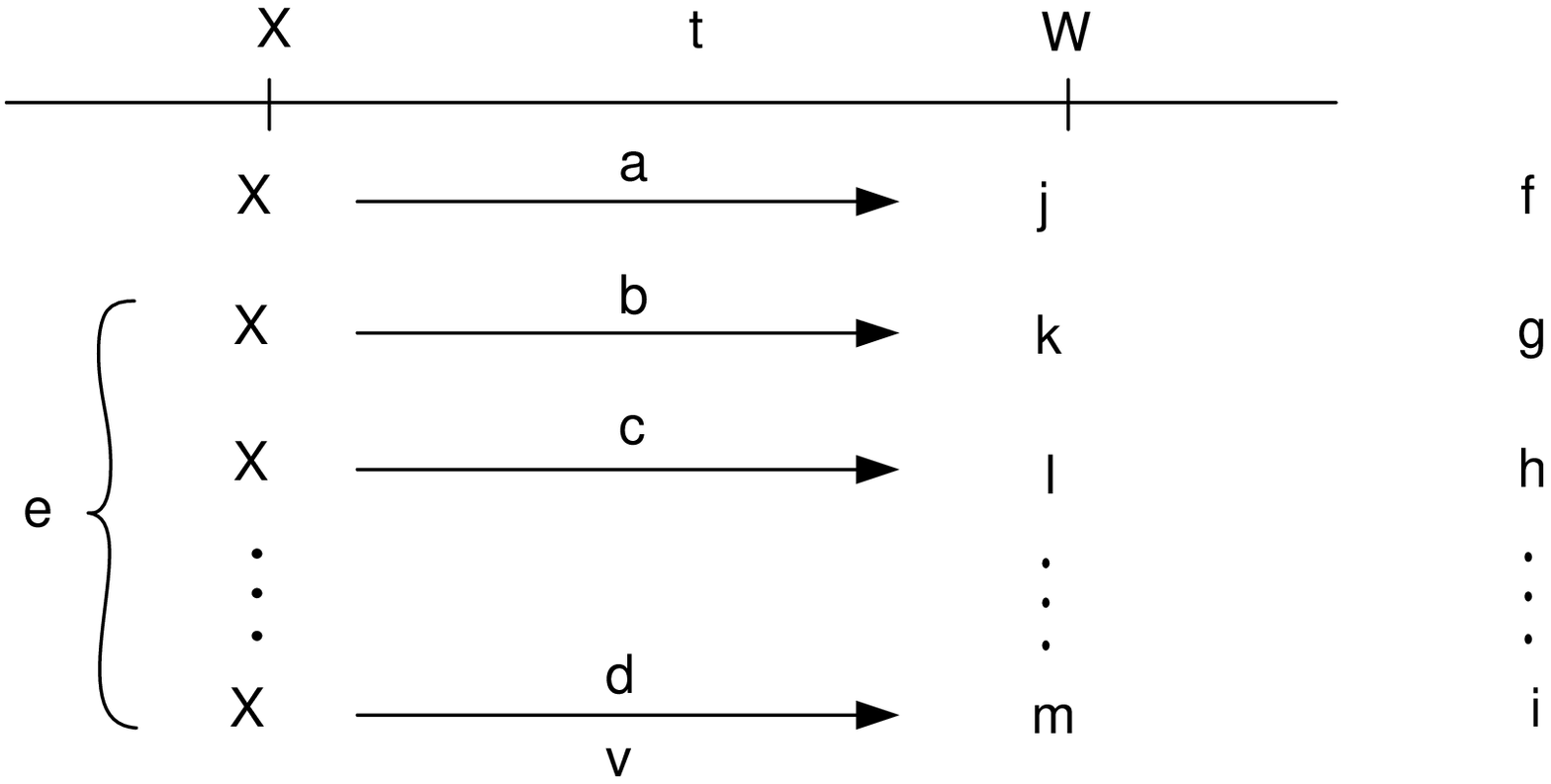}
    \caption{$h_t^{\pi}$ balancing server reallocations are required to create a policy in $\Pi_t$ from policy $\pi \in \Pi_{t-1}$. $\mathsf{MW}$ indices given the state of the system at time slot $t$ are also compared.}
    \label{balancing_reallocation}\vspace{-5mm}
\end{figure}

\begin{defin}
For any two policies $ \pi$ and $\tilde{\pi}$ with queue length processes $\mat{X}=\{\mat{X}(t)\}_{t=1}^{\infty}$ and $\tilde{\mat{X}}=\{\tilde{\mat{X}}(t)\}_{t=1}^{\infty}$, respectively, we say $\tilde{\pi}$ \textit{dominates} $\pi$, if $f(\tilde{\mat{X}}) \leq_{st} f(\mat{X})$, $f\in\cal F$, i.e., the queue length cost (delay) of policy $\tilde{\pi}$ is stochastically less than that of policy $\pi$.
\end{defin}

If $\tilde{\pi}$ dominates $\pi$ we have $ E[f(\tilde{\mat{X}})] \leq E[f(\mat{X})] $\footnote{Choosing $f(\mat{x})= \sum_{n=1}^N x_n $, we conclude that the expected total queue occupancy (or equivalently average queueing delay) of policy $\tilde{\pi}$ is smaller than that of policy $\pi$ in every time slot.}.  
In the following lemma, we will interconnect the notions of ``maximizing the matching weight index'' and ``delay optimality'' and show that maximization of the matching weight index (at any given time $t$) will improve the delay performance (will decrease the queue length cost function $f(\mat{X})$ stochastically). The key element in the interconnection is the notion of balancing server reallocation. In particular, we show that, for any given policy $ \pi \in \Pi_t^h$, $h=h_t^{\pi}$  that does not employ a maximum weighted matching at time slot $t$ (i.e., $h>0$), there exists a balancing server reallocation at time slot $t$. In the following lemma, we show that by using such a balancing server reallocation at time slot $t$ we can construct a new policy $\tilde{\pi}$ that dominates the original policy $\pi$. For the detailed proof, please refer to Appendix \ref{l14}. We used stochastic ordering and dynamic coupling to prove this lemma.

\begin{lem} \label{l3}
For any policy $ \pi \in \Pi_t^h$ 
where $h=h_t^{\pi}>0$, we can construct a policy $\tilde{\pi} \in \Pi_t^{h-1}$ 
such that $\tilde{\pi}$ dominates $\pi$. Thus, $\tilde{\pi}$ outperforms $\pi$ in terms of average queueing delay. 
\end{lem}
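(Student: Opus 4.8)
The plan is to prove Lemma \ref{l3} by an explicit \emph{dynamic coupling} argument. Starting from $\pi \in \Pi_t^h$ with $h = h_t^\pi > 0$, Lemma \ref{l2} guarantees the existence of a balancing server reallocation at time slot $t$; applying it once produces a new matching at time slot $t$ whose intermediate queue vector $\tilde{\mat{x}}'(t)$ satisfies either \textbf{C1} or \textbf{C2} relative to $\mat{x}'(t)$, and by Lemma \ref{l1} this strictly increases the $\mathsf{MW}$ index, so the resulting policy lies one balancing reallocation closer to a maximum weighted matching, i.e.\ in $\Pi_t^{h-1}$. I would define $\tilde\pi$ to coincide with $\pi$ at all time slots $\tau < t$ (legitimate since $\pi \in \Pi_{t-1}$, so both agree there), to use the balanced matching at time slot $t$, and then for $\tau > t$ to be defined by a \emph{coupling / forward-induction} rule that mimics $\pi$'s decisions while maintaining a dominance invariant on the queue vectors.

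The core is to establish and propagate the invariant $\tilde{\mat X}(\tau) \preceq_p \mat X(\tau)$ for all $\tau \ge t$, on a common probability space where the arrival and connectivity processes are identical for both systems. First I would check the base step: after the balancing reallocation at time $t$ and adding the (common) arrivals $\mat A(t)$, we get $\tilde{\mat X}(t) \preceq_p \mat X(t)$ — in case \textbf{C1} this is relation \textbf{D1}, in case \textbf{C2} it is a balancing interchange \textbf{D3}, and adding the same arrival vector to both preserves $\preceq_p$ (this monotonicity of $\preceq_p$ under common arrivals is the kind of routine fact I would verify separately, distinguishing the \textbf{D1} and \textbf{D3} sub-cases — in the \textbf{D3} case a common arrival to one of the two special coordinates can turn the relation into \textbf{D1}, which is fine). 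Then for the inductive step at time $\tau > t$: given $\tilde{\mat X}(\tau-1) \preceq_p \mat X(\tau-1)$ and a common connectivity matrix $\mat C(\tau)$, and given the matching $\mat M^{(\pi)}(\tau)$ that $\pi$ employs, I would construct a matching for $\tilde\pi$ (via an interchange/case analysis on which coordinates differ, parallel to the classical Tassiulas-type coupling) so that the post-service intermediate vectors still satisfy $\tilde{\mat X}'(\tau) \preceq_p \mat X'(\tau)$, and then add the common arrivals $\mat A(\tau)$ to conclude $\tilde{\mat X}(\tau) \preceq_p \mat X(\tau)$. Since every $f \in \mathcal F$ is monotone non-decreasing with respect to $\preceq_p$, the invariant gives $f(\tilde{\mat X}(\tau)) \le f(\mat X(\tau))$ almost surely for all $\tau$, hence $f(\tilde{\mat X}) \le_{st} f(\mat X)$, i.e.\ $\tilde\pi$ dominates $\pi$; choosing $f(\mat x) = \sum_n x_n$ yields the delay statement. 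It remains to confirm $\tilde\pi \in \Pi_t^{h-1}$, which follows because $\tilde\pi$ agrees with a maximum-weighted-matching-compatible history on $\tau < t$ and its time-$t$ matching is exactly one balancing reallocation short of maximal by Lemmas \ref{l1}–\ref{l2}.

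The main obstacle I expect is the inductive step for $\tau > t$: constructing the coupled matching $\mat M^{(\tilde\pi)}(\tau)$ that preserves $\preceq_p$ under \emph{all three} cases of the relation (\textbf{D1}, permutation \textbf{D2}, balancing interchange \textbf{D3}), since the two systems may have served different queues at earlier slots and the matching constraints (at most one server per queue, at most one queue per server, and at most one server per queue in MQMS-Type2) must be respected by the constructed matching. The delicate sub-cases are when $\mat X(\tau-1)$ and $\tilde{\mat X}(\tau-1)$ differ in the two coordinates $n,m$ arising from a balancing interchange and $\pi$ serves queue $m$ (the larger one) but not $n$ while in the coupled system the connectivity to $m$ may or may not be available; here one must let $\tilde\pi$ either track $\pi$'s assignment on the servers other than the one involved and reroute that one server, or leave it idle, and then carefully verify the intermediate vectors remain ordered — this is precisely where the strength of the order $\preceq_p$ (allowing permutations and balancing interchanges, not just coordinatewise domination) is essential, and it is the technical heart of the argument. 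The rest — closing under arrivals, invoking monotonicity of $\mathcal F$, and the bookkeeping that $h_t^{\tilde\pi} = h - 1$ — is comparatively mechanical.
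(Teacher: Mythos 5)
Your overall architecture matches the paper's: agree with $\pi$ for $\tau<t$, apply the balancing reallocation from Lemma \ref{l2} at time $t$, propagate an invariant $\tilde{\mat{x}}(\tau)\preceq_p\mat{x}(\tau)$ by induction for $\tau>t$, and finish via monotonicity of $\mathcal{F}$. However, there is a genuine gap in the coupling you specify: you insist on ``a common probability space where the arrival and connectivity processes are identical for both systems'' and on adding ``the common arrivals $\mat{A}(\tau)$'' in the inductive step. With identical arrivals and connectivities the invariant cannot be maintained in the permutation case \textbf{D2} (nor in some \textbf{D3} sub-cases), no matter how cleverly you choose the matching $\mat{M}^{(\tilde\pi)}(\tau)$. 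Concretely, suppose $\tilde{\mat{x}}(\tau)$ and $\mat{x}(\tau)$ agree except that $\tilde{x}_n=x_m=5$ and $\tilde{x}_m=x_n=0$. If at $\tau+1$ queue $m$ is connected to a server but queue $n$ is connected to none, $\pi$ serves queue $m$ and reaches $(x'_n,x'_m)=(0,4)$, while no matching in the coupled system can serve the $5$ packets sitting at queue $n$, giving $(\tilde{x}'_n,\tilde{x}'_m)=(5,0)$; and $(5,0)$ is not obtainable from $(0,4)$ by any sequence of reductions, two-element permutations, or balancing interchanges. The same failure occurs with common arrivals: a single arrival to queue $n$ only turns $(0,5)$ vs.\ $(5,0)$ into $(1,5)$ vs.\ $(6,0)$, and $(6,0)\not\preceq_p(1,5)$.

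The missing idea is that the coupled sample path must \emph{permute} the arrival and connectivity random variables across the permuted (or interchanged) coordinates rather than copy them: in the \textbf{D2} case one sets $\tilde{c}_{n,k}(\tau+1)=c_{m,k}(\tau+1)$, $\tilde{c}_{m,k}(\tau+1)=c_{n,k}(\tau+1)$ and $\tilde{a}_n(\tau+1)=a_m(\tau+1)$, $\tilde{a}_m(\tau+1)=a_n(\tau+1)$ (and similarly swaps the arrivals to $m$ and $n$ in the critical \textbf{D3} sub-case where $\tilde{x}_n(\tau)=\tilde{x}_m(\tau)$ and only one of the two is served). This is exactly where the i.i.d.\ (permutation-invariance) assumption on arrivals and connectivities is used: the permuted process $\{(\tilde{\mat{C}}(\tau),\tilde{\mat{A}}(\tau))\}$ has the same law as the original, so $\tilde\pi$ is a legitimate policy on an equally distributed input, and the pathwise inequality $f(\tilde{\mat{x}}(\tau))\le f(\mat{x}(\tau))$ then yields $f(\tilde{\mat{X}})\le_{st}f(\mat{X})$. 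Without this permuted coupling the inductive step --- which you correctly identify as the technical heart --- does not go through.
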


Using Lemma \ref{l3}, we can prove the following theorem which states that any MWM policy outperforms any non-MWM policy in terms of average queueing delay.

\begin{theor} \label{th1}
For any server assignment policy $\pi \notin \Pi^{\text{MWM}}$, there exists an MWM policy $\pi^\ast\in \Pi^{\text{MWM}}$ such that $\pi^\ast$ dominates $\pi$.
\end{theor}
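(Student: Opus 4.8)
The plan is to bootstrap Lemma \ref{l3} into an induction over time slots: I produce a sequence of policies, each dominating $\pi$ and each agreeing with some MWM policy on a strictly growing initial segment of time, and take the pointwise limit as $\pi^\ast$. To start, since $\pi \notin \Pi^{\text{MWM}}=\bigcap_{t\ge 1}\Pi_t$ and $\Pi_{t-1}\supseteq\Pi_t$, there is a smallest time slot $t_1$ with $\pi\notin\Pi_{t_1}$; hence $\pi\in\Pi_{t_1-1}$ (taking $\Pi_0$ to be the set of all policies) and $\pi\in\Pi_{t_1}^{h}$ with $h=h_{t_1}^{\pi}>0$. Applying Lemma \ref{l3} once gives a policy in $\Pi_{t_1}^{h-1}$ that dominates $\pi$; applying it $h$ times in succession and using transitivity of the relation ``dominates'' (which is immediate from transitivity of $\leq_{st}$) yields a policy $\pi^{(1)}\in\Pi_{t_1}^{0}=\Pi_{t_1}$ dominating $\pi$. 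Only finitely many reallocations are needed because $h\le H<\infty$ by the bound established from Lemmas \ref{l1} and \ref{l2}.

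Next I iterate. If $\pi^{(j)}\in\Pi^{\text{MWM}}$ I stop and set $\pi^\ast=\pi^{(j)}$. Otherwise let $t_{j+1}$ be the smallest time slot at which $\pi^{(j)}$ does not employ a maximum weighted matching; since $\pi^{(j)}\in\Pi_{t_j}$ we have $t_{j+1}>t_j$. As before, finitely many applications of Lemma \ref{l3} convert $\pi^{(j)}$ into a policy $\pi^{(j+1)}\in\Pi_{t_{j+1}}$ that dominates $\pi^{(j)}$, hence dominates $\pi$ by transitivity. The times $t_j$ strictly increase, so $t_j\to\infty$. Moreover, a balancing server reallocation performed at time slot $\tau$ leaves the employed matchings at all slots $<\tau$ untouched and keeps the policy in $\Pi_{\tau-1}$; therefore, for every fixed $t$, once $t_j>t$ the matchings $\mat{M}^{(\pi^{(i)})}(t)$ coincide for all $i\ge j$. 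I define $\pi^\ast$ to be the policy whose matching at each time slot $t$ equals this eventual common value. By construction $\pi^\ast$ employs a maximum weighted matching at every time slot, i.e., $\pi^\ast\in\Pi^{\text{MWM}}$.

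It remains to verify that $\pi^\ast$ dominates $\pi$. Fix a time slot $t$ and choose $j$ with $t_j>t$. By the evolution equation (\ref{evolution}), the queue-length vector at time $t$ under any policy is determined by the arrivals, the connectivities, and the employed matchings, all up to time $t$ only; since $\pi^\ast$ and $\pi^{(j)}$ share the same matchings on $\{1,\dots,t\}$, they have the same queue-length vector $\mat{X}(t)$ on the common probability space. Hence for every $r$ and every $f\in\mathcal{F}$ we get $\Pr\!\big(f(\mat{X}_{\pi^\ast}(t))>r\big)=\Pr\!\big(f(\mat{X}_{\pi^{(j)}}(t))>r\big)\le \Pr\!\big(f(\mat{X}_{\pi}(t))>r\big)$, the inequality because $\pi^{(j)}$ dominates $\pi$. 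Since $t$ and $r$ were arbitrary, $f(\mat{X}_{\pi^\ast})\leq_{st} f(\mat{X}_{\pi})$ for all $f\in\mathcal{F}$, i.e., $\pi^\ast$ dominates $\pi$, which proves the theorem.

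The main obstacle is the infinite nature of the construction: one must guarantee that the sequence of modified policies actually stabilizes at every fixed time slot, so that the limiting policy $\pi^\ast$ is well defined, and that the slotwise stochastic-dominance inequality survives the passage to the limit. Both points rest on the same structural fact used above, namely that a balancing server reallocation at time slot $\tau$ never alters earlier slots and keeps the policy in $\Pi_{\tau-1}$, combined with the finiteness bound $h_t^{\pi}\le H$; once this is in place the limit is harmless, since the cost process is compared slot by slot and each slot is frozen after finitely many steps. A minor point to state explicitly is the transitivity of the ``dominates'' relation, which is inherited directly from transitivity of $\leq_{st}$.
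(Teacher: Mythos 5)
Your proof is correct and follows essentially the same route as the paper: repeated application of Lemma \ref{l3} to move through $\Pi_{t_1},\Pi_{t_2},\dots$ and passage to a limiting policy $\pi^\ast$ that agrees with MWM at every slot. The only difference is that you spell out explicitly why the limit is well defined and why the slotwise stochastic-dominance inequality survives it (freezing of each slot after finitely many steps), details the paper's proof asserts without elaboration.
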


\begin{proof}
Let $\pi$ be any arbitrary non-MWM policy. Then $\pi \in \Pi_1^{H_1}$ where $H_1={h^{\pi}_1}$. By applying Lemma \ref{l3} repeatedly, we can construct a sequence of policies such that each policy dominates the previous one. Thus, we obtain policies that belong to $\Pi_1^{H_1},\Pi_1^{{H_1}-1},\Pi_1^{{H_1}-2},\dots,\Pi_1^0=\Pi_1$. The last policy is called $\pi_1$ for which we have $\pi_1 \in \Pi_2^{H_2}$ where $H_2=h^{\pi_1}_2$. By continuing such an argument, we obtain a sequence of policies $\pi_t \in \Pi_t$, $t=1,2,\dots$ such that $\pi_j $ dominates $\pi_i$ for $j>i$.
This sequence of policies defines a limiting policy $\pi^\ast$ that agrees
with MWM at all time slots. Thus, $\pi^\ast $ is an MWM policy that dominates all the
previous policies, including the starting policy $\pi$. This proves that the delay-optimal policy is an MWM policy in $\Pi^{\text{MWM}}$.
\end{proof}

As we mentioned before, the set $\Pi^{\text{MWM}}$ may contain an infinite number of policies. 
In the following, we show that \textit{any} MWM policy is delay-optimal.
To achieve this, we need to prove the following lemma\footnote{As part of the proof for this lemma, we need preliminary Lemma \ref{lperm} presented and proven in Appendix \ref{plperm}}. 




\begin{lem} \label{lnew}
The queue length costs of all the maximum weighted matching policies in $\Pi^\text{MWM}$ are equal in distribution, i.e., for any two MWM policies $\pi_1,\pi_2 \in \Pi^{\text{MWM}}$, we have $f({\mat{X}}^{(\pi_1)}) \buildrel \mathcal{D}\over = f({\mat{X}}^{(\pi_2)})$ where ${\mat{X}}^{(\pi_1)}$ and ${\mat{X}}^{(\pi_2)}$ are the queue length processes under $\pi_1$ and $\pi_2$, respectively.
\end{lem}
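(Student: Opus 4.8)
The plan is to reduce the claim to a \emph{coupling} that keeps the two queue length processes permutation-equivalent at every time slot, exploiting that every $f\in\mathcal{F}$ is invariant under permutations of its argument. The first step is this symmetry observation: if $\tilde{\mat{x}}$ and $\mat{x}$ have the same multiset of entries, then $\tilde{\mat{x}}\preceq_p\mat{x}$ and $\mat{x}\preceq_p\tilde{\mat{x}}$ (each direction is a composition of transpositions of type \textbf{D2}), so monotonicity of $f$ with respect to $\preceq_p$ forces $f(\tilde{\mat{x}})=f(\mat{x})$; i.e., functions in $\mathcal{F}$ are symmetric. Hence it is enough to exhibit, on a single probability space, coupled versions $\hat{\mat{X}}^{(\pi_1)}$ and $\hat{\mat{X}}^{(\pi_2)}$ of the two queue length processes, with the correct marginal laws, such that $\hat{\mat{X}}^{(\pi_1)}(t)$ is a permutation of $\hat{\mat{X}}^{(\pi_2)}(t)$ for all $t$ almost surely; then $f(\hat{\mat{X}}^{(\pi_1)}(t))=f(\hat{\mat{X}}^{(\pi_2)}(t))$ a.s.\ for every $t$, and equality in distribution of $f(\mat{X}^{(\pi_1)})$ and $f(\mat{X}^{(\pi_2)})$ follows.

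The coupling is built by induction on $t$, maintaining the invariant that there is a permutation $\sigma_{t}$ of $\mathcal{N}$, measurable with respect to the history up to time $t$, with $\hat{\mat{X}}^{(\pi_1)}(t)=\sigma_{t}(\hat{\mat{X}}^{(\pi_2)}(t))$; it holds trivially at $t=0$ since both queues start empty. Given the invariant at time $t-1$, draw the connectivity matrix $\hat{\mat{C}}^{(\pi_1)}(t)$ and let $\hat{\mat{C}}^{(\pi_2)}(t)$ be the matrix obtained by permuting its rows according to $\sigma_{t-1}$. Because the entries $C_{n,k}(t)$ are i.i.d.\ Bernoulli$(p)$, the row-permuted matrix again has i.i.d.\ Bernoulli$(p)$ entries and is independent of the past, so this is a legitimate coupling of the two connectivity processes; moreover the weighted bipartite graph faced by $\pi_2$ at time $t$ is then exactly the $\sigma_{t-1}$-relabelling of the one faced by $\pi_1$, so maximum weighted matchings on the two graphs correspond under $\sigma_{t-1}$. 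Since $\pi_2\in\Pi^{\text{MWM}}$ it employs \emph{some} maximum weighted matching; by Lemma \ref{lperm} its intermediate vector is permutation-equivalent to that of the $\sigma_{t-1}$-image of the matching employed by $\pi_1$ (which is also a maximum weighted matching on $\pi_2$'s graph). Composing with $\sigma_{t-1}$ yields a history-measurable permutation $\tau_t$ with $\mat{X}'^{(\pi_1)}(t)=\tau_t(\mat{X}'^{(\pi_2)}(t))$. Finally couple the arrivals by setting $\hat{\mat{A}}^{(\pi_2)}(t)=\tau_t^{-1}(\hat{\mat{A}}^{(\pi_1)}(t))$ — again legitimate, the $A_n(t)$ being i.i.d.\ Bernoulli$(\lambda)$ — so that adding arrivals preserves the permutation relation and the invariant holds at time $t$ with $\sigma_t=\tau_t$.

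The one nontrivial ingredient is the preliminary Lemma \ref{lperm}: that two distinct maximum weighted matchings on the same weighted bipartite graph always produce intermediate queue vectors with the same multiset of entries. I expect this to be the main obstacle and would prove it by graph-theoretic means — examine the connected components of the symmetric difference $\mat{M}_1\triangle\mat{M}_2$ (alternating paths and even cycles); maximality of both matchings forces the total $\mat{M}_1$-weight and $\mat{M}_2$-weight to coincide on each component, and from this one checks componentwise that the set of queues whose length is decremented by $\mat{M}_1$ and the one decremented by $\mat{M}_2$ differ only by a rearrangement of equal lengths. The remaining points — that permuting rows of an i.i.d.\ Bernoulli matrix leaves its law unchanged, and that the permutations $\sigma_{t-1}$ and $\tau_t$ compose consistently along the induction — are routine bookkeeping.
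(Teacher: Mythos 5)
Your reduction of Lemma \ref{lnew} to Lemma \ref{lperm} is sound and is essentially the paper's own argument in a cleaner package: the paper likewise couples the two systems by permuting connectivity rows, arrivals and server allocations of the ``permuted queues,'' and likewise relies on the permutation-invariance of every $f\in\mathcal{F}$ (which follows, as you note, from \textbf{D2} applied in both directions). The only structural difference is that the paper builds a sequence of intermediate policies $\tilde{\pi}_1,\tilde{\pi}_2,\dots$ agreeing with $\pi_2$ on longer and longer prefixes and passes to a limit, whereas you maintain a single history-measurable permutation $\sigma_t$ by induction on $t$; your version avoids the limiting argument and is, if anything, tidier. The legitimacy of the row-permutation coupling (applying a past-measurable permutation to an i.i.d.\ Bernoulli matrix that is independent of the past preserves its law) is exactly the exchangeability the paper exploits.

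The genuine gap is in your sketch of Lemma \ref{lperm}. It is true that on each component of $\mat{M}_1\triangle\mat{M}_2$ the two matchings must have equal total weight (otherwise swapping along that component would strictly improve one of them while remaining a matching). But equal component weight does not by itself imply that the multisets of lengths of the queues actually decremented coincide: the weight identity only gives $\sum_{i\in S_1}x_{n_i}=\sum_{i\in S_2}x_{n_i}$, which is consistent with, say, $3+2=5$, and in that situation the two intermediate vectors would \emph{not} be permutations of each other. Ruling this out is precisely the content of the paper's proof of Lemma \ref{lperm}: starting from a queue $n$ served by one matching but not the other (a \textbf{T2} edge pair), one traces around the alternating cycle to the first non-\textbf{T0} pair and shows by a local exchange argument that it must be a \textbf{T1} pair at a queue $m$ with $x_m(t-1)=x_n(t-1)$ exactly --- if $x_m(t-1)$ were strictly larger or strictly smaller, one of the two matchings could be rerouted along the intervening segment to gain weight, contradicting its maximality. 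So the step you describe as ``one checks componentwise'' is not routine bookkeeping; it is the heart of the auxiliary lemma and requires this finer case analysis.
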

The proof of this lemma is provided in Appendix \ref{plnew}. 


Using Theorem \ref{th1} and Lemma \ref{lnew}, we can conclude the main result of this section in the following theorem.
\begin{theor} \label{th2}
Any Maximum Weighted Matching policy dominates any server assignment policy, i.e., any MWM policy is delay-optimal.
\end{theor}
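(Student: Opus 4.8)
The plan is to combine the two results the section has already assembled: Theorem~\ref{th1}, which says that the delay-optimal policy lies in $\Pi^{\text{MWM}}$ (equivalently, every MWM policy dominates every non-MWM policy), and Lemma~\ref{lnew}, which says that all policies in $\Pi^{\text{MWM}}$ produce queue length costs that are equal in distribution. Together these immediately pin down the cost of an arbitrary MWM policy relative to an arbitrary competitor. First I would fix an arbitrary MWM policy $\pi^{\text{MWM}} \in \Pi^{\text{MWM}}$ and an arbitrary server assignment policy $\pi$, and split into the two cases $\pi \notin \Pi^{\text{MWM}}$ and $\pi \in \Pi^{\text{MWM}}$.

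In the first case, Theorem~\ref{th1} supplies \emph{some} MWM policy $\pi^\ast \in \Pi^{\text{MWM}}$ with $f(\mat{X}^{(\pi^\ast)}) \leq_{st} f(\mat{X}^{(\pi)})$ for every $f \in \mathcal{F}$. But $\pi^\ast$ need not be the particular policy $\pi^{\text{MWM}}$ we fixed, so I would then invoke Lemma~\ref{lnew} with the pair $\pi^\ast, \pi^{\text{MWM}} \in \Pi^{\text{MWM}}$ to get $f(\mat{X}^{(\pi^{\text{MWM}})}) \buildrel \mathcal{D}\over = f(\mat{X}^{(\pi^\ast)})$. Since equality in distribution of the cost processes certainly gives equality of the tail probabilities $\Pr(f(\mat{X}^{(\pi^{\text{MWM}})}(t)) > r) = \Pr(f(\mat{X}^{(\pi^\ast)}(t)) > r)$ for all $t$ and $r$, we may substitute and conclude $f(\mat{X}^{(\pi^{\text{MWM}})}) \leq_{st} f(\mat{X}^{(\pi)})$, i.e.\ $\pi^{\text{MWM}}$ dominates $\pi$. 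In the second case, $\pi \in \Pi^{\text{MWM}}$, Lemma~\ref{lnew} applies directly to the pair $\pi^{\text{MWM}}, \pi$ and gives $f(\mat{X}^{(\pi^{\text{MWM}})}) \buildrel \mathcal{D}\over = f(\mat{X}^{(\pi)})$, which in particular implies $\leq_{st}$ in both directions, so again $\pi^{\text{MWM}}$ dominates $\pi$. Combining the cases, $\pi^{\text{MWM}}$ dominates every policy $\pi$; since it was an arbitrary MWM policy and $f \in \mathcal{F}$ was arbitrary (in particular $f(\mat{x}) = \sum_n x_n$ is admissible), every MWM policy is delay-optimal.

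I would close by spelling out the delay interpretation: taking $f(\mat{x}) = \sum_{n=1}^N x_n$, property~(a) of stochastic ordering gives $E[\sum_n X_n^{(\pi^{\text{MWM}})}(t)] \le E[\sum_n X_n^{(\pi)}(t)]$ for every $t$, and by Little's law this is the statement that MWM minimizes average queueing delay. There is essentially no new technical obstacle here — all the work has been discharged into Theorem~\ref{th1} and Lemma~\ref{lnew}. The only point requiring a moment's care is the bookkeeping in the first case: Theorem~\ref{th1} only hands us \emph{an} MWM dominator, not the one we want, so the argument genuinely needs Lemma~\ref{lnew} to transfer dominance from $\pi^\ast$ to the fixed $\pi^{\text{MWM}}$, and one must note that $\buildrel \mathcal{D}\over =$ of the cost processes is exactly what licenses that transfer through the definition of $\leq_{st}$.
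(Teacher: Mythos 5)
Your proposal is correct and follows exactly the paper's intended argument: Theorem~\ref{th2} is obtained by combining Theorem~\ref{th1} (some MWM policy dominates any non-MWM policy) with Lemma~\ref{lnew} (all MWM policies have cost processes equal in distribution), using the latter to transfer dominance to an arbitrary fixed MWM policy. The paper states this combination without spelling out the case split or the tail-probability substitution, so your write-up is simply a more explicit version of the same proof.
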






\vspace{-3mm}
\subsection{Extensions}
\subsubsection{Imperfect Services}
We can extend Theorems \ref{th1} and \ref{th2} for the case where the service of a scheduled packet by a connected server fails randomly with a certain probability. This can model the operation of realistic wireless networks where service failures usually occur due to unexpected and unpredictable effects of noise, interference, etc. In the case of a packet service failure, the packet will be kept in the queue and will be rescheduled and retransmitted in future time slots.

By the random variable $Q_{n,k}(t) \in \{0,1\}$, we denote the successful/unsuccessful service of queue $n$ provided by server $k$ at time slot $t$; a value of $1$ (resp. $0$) denotes that the service is successful (resp. unsuccessful). We assume that $Q_{n,k}(t)$, $\forall n \in \mathcal{N}, \forall k \in \mathcal{K}$ are i.i.d. Bernoulli random variables with the same success probability $q$. The parameter $q$ (similar to parameters $\lambda$ and $p$) is not explicitly involved in our analysis other than the fact that 
 $E[Q_{n,k}(t)]=q, \forall n,k,t$. The queue lengths are then updated at the end of each time slot by the following rule. 
\vspace{-1mm}
\begin{eqnarray}
X_n(t)= \left (X_n(t-1) -\displaystyle\sum_{k=1}^{K}C_{n,k}(t)M_{n,k}^{(\pi)}(t) Q_{n,k}(t) \right)^+ + A_n(t)~~~~~\forall n \in \cal N
\end{eqnarray}
The network scheduler (that performs server assignment process) cannot observe the variables $Q_{n,k}(t)$ and from its perspective they are assumed to be random. The random vector $\mat{X}'(t)$ is defined similar to equation (\ref{xprim}). Hence, $\mat{X}'(t)$ represents the queue lengths before adding the new arrivals of time slot $t$ as if all the services at that time slot are successful. 
   
For such a system, we can verify that Lemmas \ref{l1} and \ref{l2} are valid. We can extend Lemma \ref{l3} for the system with service failures by considering the random variables $Q_{n,k}(t)$ in our dynamic coupling argument. 
The proof is followed by using the same approach as in Lemma \ref{l3}. The detailed analysis is brought in Appendix \ref{wsf}.
%
%
By applying the same approach as in the proof of Theorem \ref{th1} and Lemma \ref{lnew}, we can similarly prove the delay optimality of MWM policy for the system with imperfect services.

\subsubsection{Extensions for Connectivity and Arrival Processes}
The arguments in Lemmas \ref{l3} and \ref{lnew}  and Theorem \ref{th1} remain valid if the i.i.d. assumption for connectivity and arrival processes is relaxed as follows; we will consider connectivity and arrival processes which follow conditional permutation invariant distributions. Given event $\cal H$ (which is used to denote the history of the system), we define a conditional multivariate probability distribution $f(y_1,y_2,\dots,y_n \mid \mathcal{H})$ to be permutation invariant if for any permutation of the variables $y_1,y_2,\dots,y_n$ namely $y'_1,y'_2,\dots,y'_n$ we have $f(y_1,y_2,\dots,y_n \mid \mathcal{H})=f(y'_1,y'_2,\dots,y'_n \mid \mathcal{H})$.
We can readily see that for all the connectivity and arrival processes whose joint distributions at each time slot given the history of the system\footnote{By history of the system we mean all the channel states, arrivals and matchings of the previous time slots up to time slot $t$.} (i.e., $f_{\mat{A}(t)}(a_1,a_2,..,a_N\mid \mathcal{H})$ and $f_{\mat{C}(t)}(c_{1,1},c_{1,2},\dots,c_{N,K-1},c_{N,K}\mid \mathcal{H})$) are permutation invariant, Lemmas \ref{l3}, \ref{lnew} and Theorem \ref{th1} are still valid and therefore MWM is delay-optimal.

We also consider the generalization of Theorems \ref{th1} and \ref{th2} for non-Bernoulli arrival processes. 
Suppose that the number of arrivals to each queue can be represented by the summation of some i.i.d. Bernoulli random variables, i.e., has Binomial distribution. Also suppose that $A_n(t)\leq A_{\text{max}}$ \textit{for all} $n \in \cal N$ and \textit{all} $t$. In this case, we can create a new (virtual) system in which after each time slot we append $A_{\text{max}}-1$ virtual time slots and put the connectivities all equal to zero, i.e., for each virtual time slot $t$, $C_{n,k}(t)=0, \forall n \in \mathcal{N}, \forall k \in \mathcal{K}$. We then distribute the arrivals of the actual time slot among these $A_{\text{max}}$ time slots (one actual time slot and $A_{\text{max}}-1$ virtual time slots) randomly such that at each time slot at most one packet arrival occurs. Since the connectivities and the arrivals in both systems are \textit{permutation invariant}, we can still prove Theorems \ref{th1} and \ref{th2} for the virtual system. We observe that the operation of the two systems (the original system and the virtual system) are the same. Therefore, we can conclude that Theorem \ref{th1} is also valid for a multi-server system with Binomial arrival processes. 
\vspace{-2mm}
\section{Simulation Results} \label{sim}
We have compared the delay performance of MWM policy with two alternative server assignment policies described in the following.


\begin{itemize}
\item \textit{Maximum Matching} (MM) policy applies the maximum matching on matrix $\mat{C}(t)$. The maximum matching policy at each time slot $t$ employs a server assignment (or matching) $\mat{M}^{(\text{MM})}(t)$ which is obtained by solving the following problem (equivalent to finding the maximum matching in the connectivity matrix). 
\begin{eqnarray}\label{matching2}
\texttt{Maximize:}&~ \displaystyle\sum_{n=1}^{N} \displaystyle\sum_{k=1}^{K} M_{n,k}(t)C_{n,k}(t)~~~~~~~~~~~ \nonumber \\
\texttt{Subject to:}&~~~~~~~\displaystyle\sum_{k=1}^{K} M_{n,k}(t)\leq 1, ~~ (n=1,2,\dots,N), \nonumber\\
&~~~~~~~\displaystyle\sum_{n=1}^{N} M_{n,k}(t)\leq 1, ~~ (k=1,2,\dots,K).
\end{eqnarray} 

The MM maximizes the instantaneous throughput at each time slot without considering the queue length information in its server assignment decisions.

\item A heuristic policy that assigns the servers to the queues at each time slot according to the following rule: It selects a server randomly and assigns it to its longest connected queue. Then, updates the set of servers by removing the selected server from $\cal K$ and the set of queues (i.e., $\mathcal{N}$) by removing the queue to which the selected server was assigned. This procedure is repeated $K$ times. For some servers the updated set $\cal N$ may be empty (e.g., when $K>N$) and therefore those servers are not assigned to any queue.
\begin{algorithm}[H]
\caption{Heuristic Policy Pseudocode}
\noindent
{\bf input}: $\cal N$, $\cal K$, $\mat{c}(t)$ and $\mat{x}(t-1)$ \\
{\bf initialize}: $\mat{M}^{(H)}(t)=(\mat{0})_{N\times K}$ \\
{\bf for} $i=1$ {\bf to} $K$ {\bf do}\\
\indent\indent~~~~~ Choose a server $k^{\star} \in \cal K$ randomly \\
\indent~~~~~~{\bf if} $\mathcal{N}\neq \emptyset$\\
\indent\indent~~~~~~~~~~~ $n^{\star}\longleftarrow\arg\max_{n\in \mathcal{N}} c_{n,k^{\star}}(t)x_{n}(t-1)$ \\
\indent\indent~~~~~~~~~~~ $M_{n^{\star},k^{\star}}^{(H)}(t)\longleftarrow1$ \\
\indent\indent~~~~~~~~~~~ $\mathcal{N}\longleftarrow\mathcal{N}-\{n^{\star}\}$\\
\indent~~~~~~{\bf endif} \\
\indent~~~~~ $\mathcal{K}\longleftarrow\mathcal{K}-\{k^{\star}\}$\\
{\bf end}\\
{\bf Return $\mat{M}^{(H)}(t)$}
\end{algorithm}
\end{itemize}
The motivation for the heuristic policy is coming from Longest Connected Queue (LCQ) policy which was proven in \cite{Tassiulas93} to be optimal for a single-server system. For multi-server system, we will use the same principle for each server. However, the order in which servers are selected for assignment is random.


We have preformed a comprehensive set of simulations in which we investigate the effects of the number of servers $K$, the probability of connectivity $p$ and the probability of service success $q$ on the performance of the aforementioned policies. In all the simulations, we set $N=8$ and the arrivals are i.i.d. Binomial distributed which is the summation of 10 Bernoulli random variables. We use log-scale for the y-axis in the figures so that we can easily compare the performance of different policies in low arrival rates (where the average queue lengths are very close for different policies).
\begin{figure}[tp]
\begin{center}
\subfloat[$p=0.2$]{\includegraphics[width=.7\textwidth]{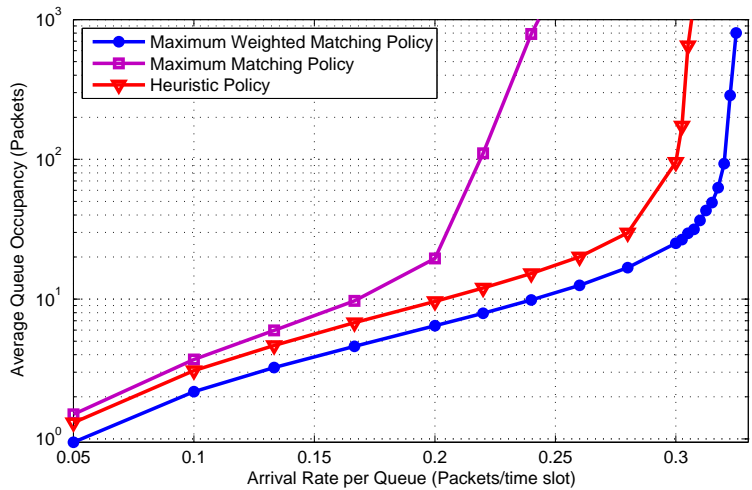}} \\
\subfloat[$p=0.5$]{\includegraphics[width=.7\textwidth]{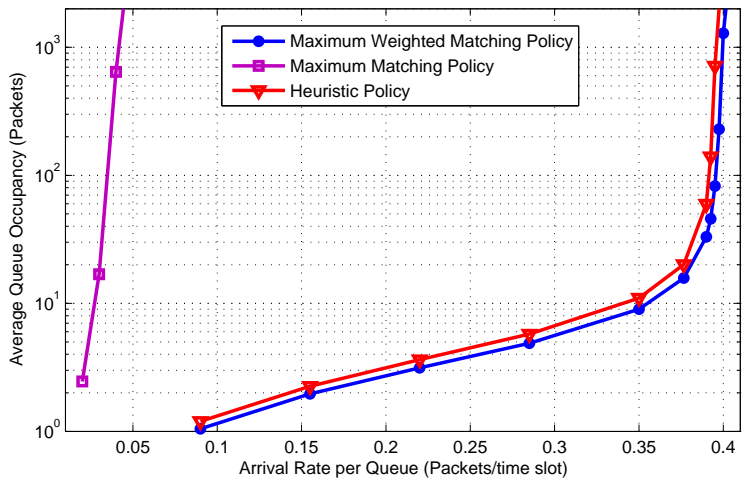}} 
\vspace{-1mm}\caption{Average total queue occupancy, $N=8$, $K=4$, $q=0.8$}\vspace{-8mm}  \label{4}
\end{center}
\end{figure}
\begin{figure}[tp]
\begin{center}
\subfloat[$p=0.2$]{\includegraphics[width=.7\textwidth]{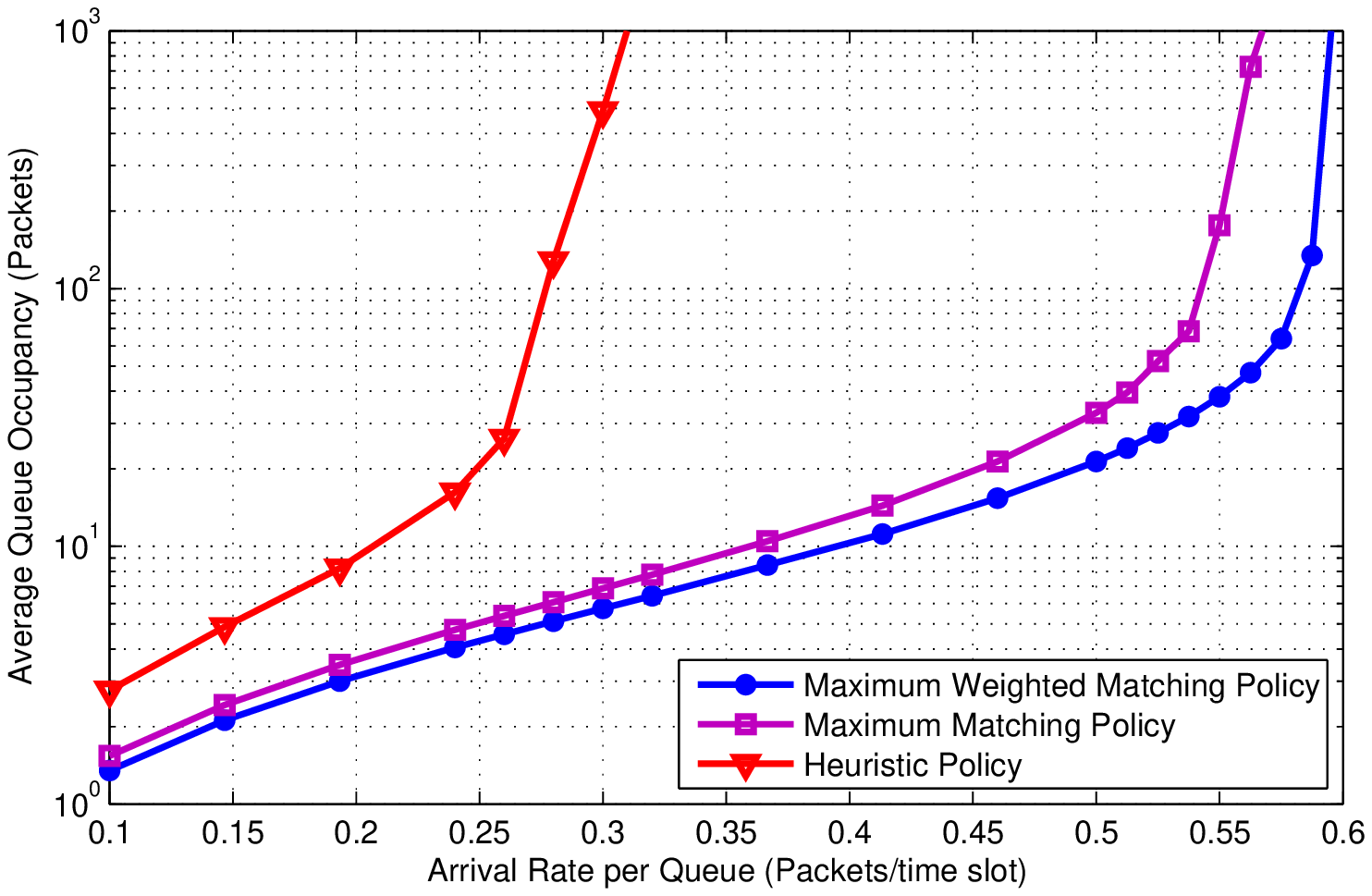}} \\
\subfloat[$p=0.5$]{\includegraphics[width=.7\textwidth]{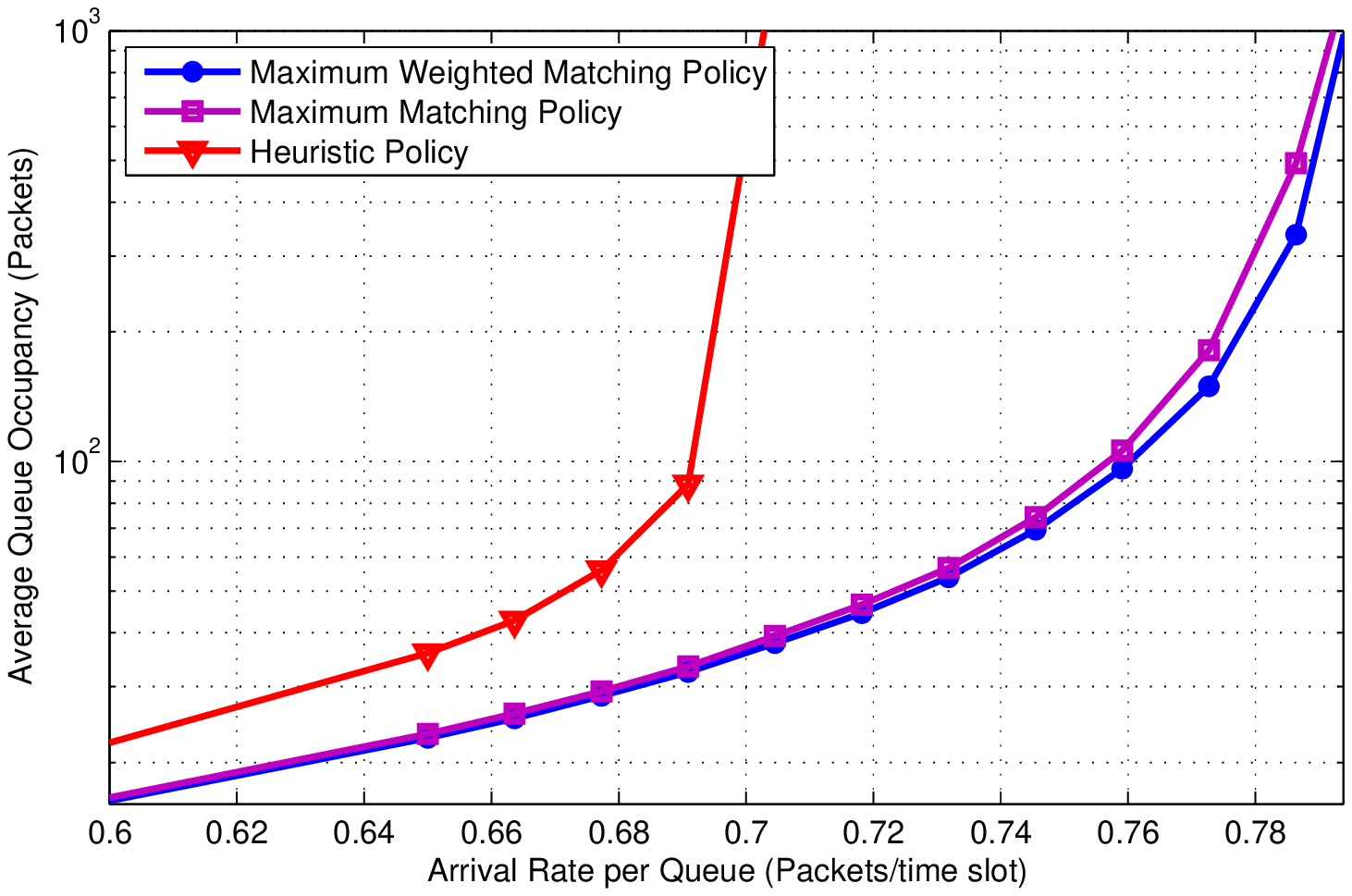}}
\vspace{-1mm}\caption{Average total queue occupancy, $N=8$, $K=8$, $q=0.8$}\vspace{-8mm} \label{8}
\end{center}
\end{figure}
\begin{figure}[tp]
\begin{center}
\subfloat[$q=0.8$]{\includegraphics[width=.7\textwidth]{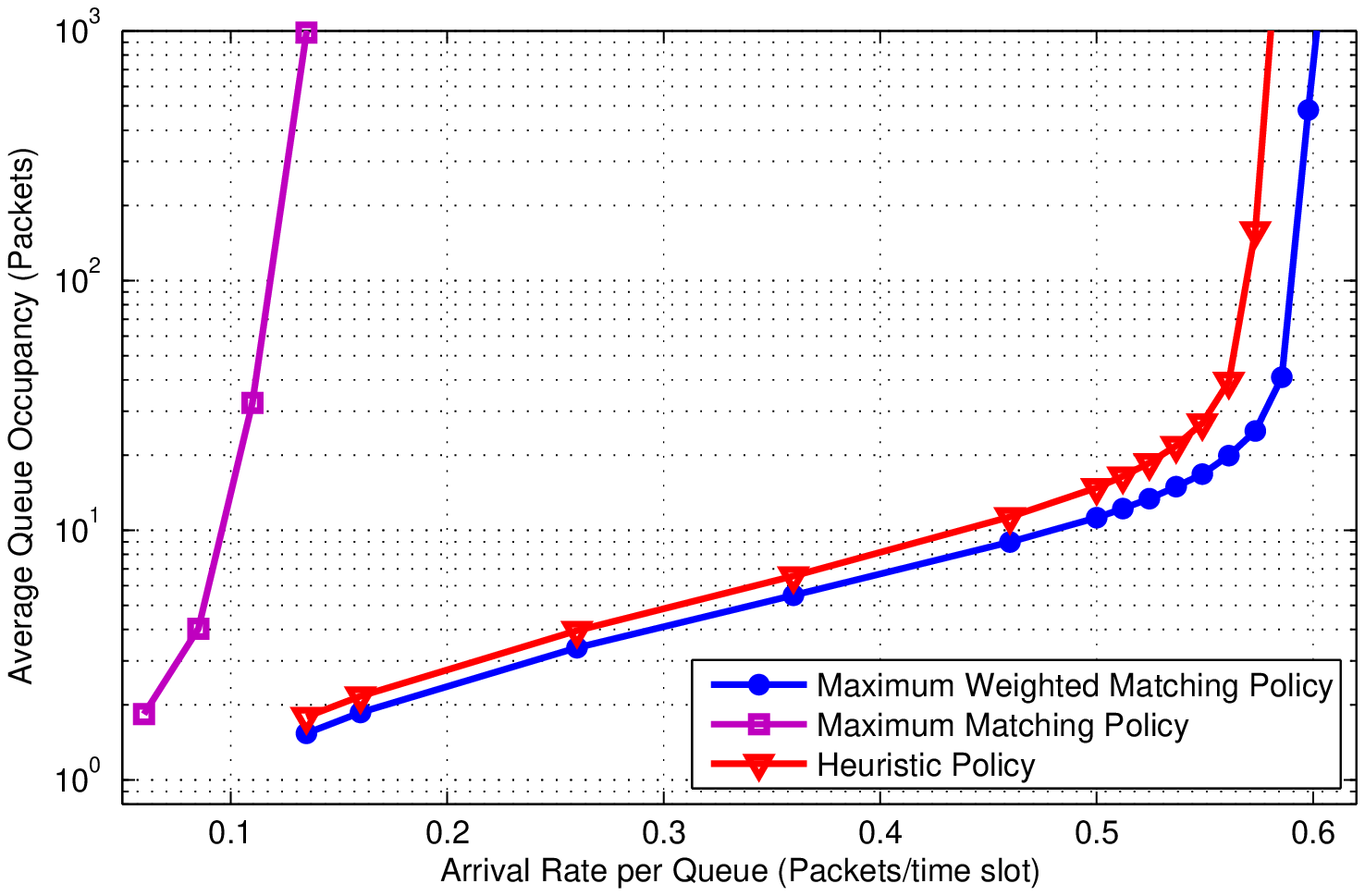}} \\
\subfloat[$q=0.2$]{\includegraphics[width=.7\textwidth]{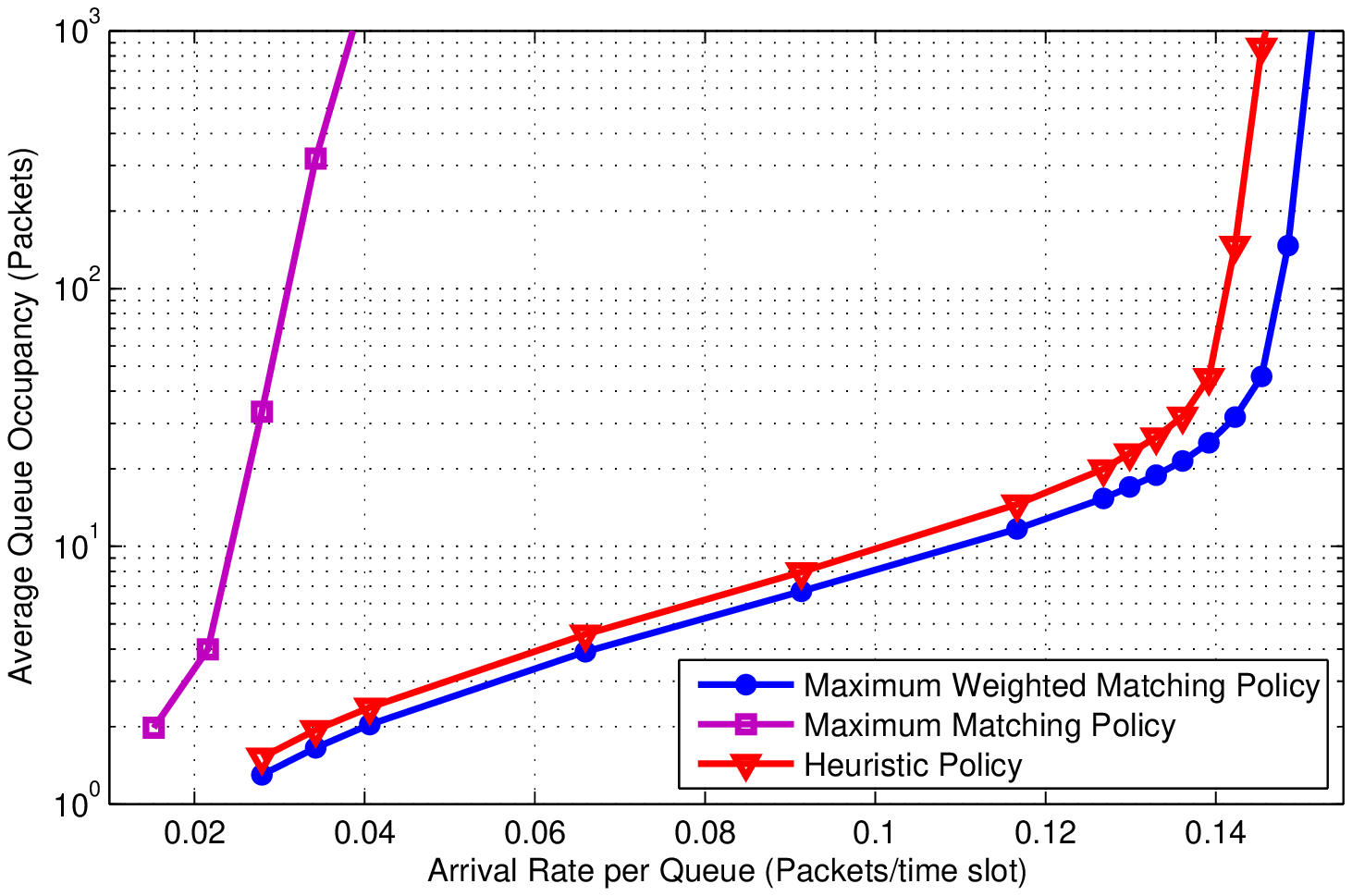}}
\vspace{-1mm}\caption{Average total queue occupancy, $N=8$, $K=6$, $p=0.5$}\vspace{-7mm} \label{6}
\end{center}
\end{figure}

Figures \ref{4}-\ref{6} illustrate the simulation results. In all the cases, the confidence interval is very small and is not visible in the graphs. 
 As we can see in all cases, MWM exhibits improved performance with respect to the other policies in terms of average queue occupancy or average queueing delay.
Figure \ref{4} shows the simulation results for $K=4$, $q=0.8$ and $p=0.2$, $0.5$. In these cases, since the number of servers is relatively low, server assignment will be more competitive. 
As shown earlier, the MWM minimizes the queue imbalance.
The heuristic policy follows the same principle. However, since the selection of servers for assignment is random, in certain cases it may happen that 
two or more servers have the same longest connected queue. In such cases, the order of selecting the servers for assignment does have an effect on the system performance. Maximum Matching policy however, does not try to balance the queues since by construction it does not consider the queue lengths in its assignments and that is why it performs worse than the other two policies. We observe that as the connectivity probability gets larger, the performances of MWM and the heuristic policy get closer. 
%
It is worth mentioning that the heuristic policy introduced here performs the same as MWM for $K=1$ (which is equivalent to LCQ whose optimality has been previously shown in \cite{Tassiulas93}).

Figure \ref{8} shows the results for $8$ servers. In this case, since the number of servers is relatively large and comparable to the number of queues, in MWM and MM policies each queue gets service with high probability when the probability of server connectivities increases. As the connectivity probability gets smaller, the difference in performance of MWM and MM becomes more apparent. In this case, the heuristic policy performs worse than the other two policies since it is more probable to lead to cases where two or more servers have the same longest connected queue. As the number of servers increases, we expect MM to perform the same as MWM as in this case the probability of serving all the queues increases. Therefore, in the limiting case where $K$ becomes very large, MM and MWM result in very close performance.

In Figure \ref{6} we have investigated the effect of service success probability. As we can see in the figures, the only effect of this parameter is to change the stability point (the arrival rate at which queue occupancy tends to infinity). In this case, again we can see that for both $q=0.2$, $0.8$, MWM policy outperforms the other policies.

\section{Conclusions}\label{conc}
In this paper, we considered the problem of assigning $K$ identical servers to a set of $N$ parallel queues in a time-slotted, multi-server queueing system with random connectivities. For such systems, it has been previously shown that MWM is throughput-optimal, i.e., has the maximum stability region. In this paper, we showed that for a system with i.i.d. Bernoulli arrival and connectivity processes, MWM is also optimal for minimizing a class of cost functions of queue lengths including the average queueing delay. 
We first proved that MWM and queue length balancing are equivalent. Then, using this result and by applying the notions of stochastic ordering and dynamic coupling techniques, we proved the delay optimality of MWM. 
Finally, we considered extensions of the model in which we have imperfect packet services or more general packet arrival and server connectivity processes. We have shown the optimality of MWM in these cases as well.

\appendices

\section{Proof of Lemma \ref{l1} and Lemma \ref{l2}} 
\subsection{Proof of Lemma \ref{l1}}\label{l12}
\begin{proof}
Let $\mat{M}^{(\tilde{\pi})}(t)$ denote the employed matching after applying the balancing server reallocation. According to the definition of balancing server reallocation, a server reallocation at time slot $t$ results in an intermediate queue length vector $\tilde{\mat{x}}'(t)$ that satisfies either condition \textbf{C1} or \textbf{C2}. Therefore, we consider the following two cases:

\textit{Case 1}: Condition \textbf{C1} is satisfied at time slot $t$. Thus, $\tilde{x}'_i(t) \leq x'_i(t)$ for all $i=1,2,\dots,N$ and there exists \textit{at least} one $m \in \{1,2,\dots,N\}$, such that $0\leq \tilde{x}'_m(t) < x'_m(t)$. We denote the (sub)set of queues for which we have $0\leq \tilde{x}'_i(t) < x'_i(t)$ by $\cal Q$.
Therefore, there exists no queue that was served by  policy $\pi$ but not by policy $\tilde{\pi}$. Also the queues in subset $\cal Q$ which were not receiving service by policy $\pi$ at time slot $t$, are now receiving service after applying the balancing server reallocation. Therefore, for all $i\in \mathcal{Q}$, $x_i(t-1) \sum_{k=1}^{K}c_{i,k}(t) M_{i,k}^{(\pi)}(t)=0 $, $x_i(t-1)\sum_{k=1}^{K}c_{i,k}(t) M_{i,k}^{(\tilde{\pi})}(t)=x_i(t-1)>0 $. Thus,
\begin{eqnarray}
\lefteqn{\mathsf{MW}_{\pi}(t)= \sum_{i\notin \mathcal{Q}} x_i(t-1) \sum_{k=1}^{K}c_{i,k}(t) M_{i,k}^{(\pi)}(t)+\sum_{i \in \mathcal{Q}} x_i(t-1) \sum_{k=1}^{K}c_{i,k}(t) M_{i,k}^{(\pi)}(t)} \nonumber \\
&&
<  \sum_{i\notin \mathcal{Q}} x_i(t-1) \sum_{k=1}^{K}c_{i,k}(t) M_{i,k}^{(\tilde{\pi})}(t)+\sum_{i \in \mathcal{Q}} x_i(t-1) \sum_{k=1}^{K}c_{i,k}(t) M_{i,k}^{(\tilde{\pi})}(t)=\mathsf{MW}_{\tilde{\pi}}(t),
\end{eqnarray} 
and the result follows.

%
%


\textit{Case 2}: Condition \textbf{C2} is satisfied at time $t$. In this case, by using policy $\pi$ at time slot $t$ queue $n$ is receiving service but queue $m$ is not. In contrast, by using policy $\tilde{\pi}$, at time slot $t$ queue $m$ is receiving service but queue $n$ is not. The service of other queues is not disturbed, i.e., the other queues which were receiving service by policy $\pi$ still receive a service by policy $\tilde{\pi}$ at time slot $t$ and the ones that were not receiving service under policy $\pi$ still do not get service under policy $\tilde{\pi}$.
Therefore, 
\begin{eqnarray}
\lefteqn{\mathsf{MW}_{\tilde{\pi}}(t) - \mathsf{MW}_{\pi}(t) =\sum_{\substack{i=1\\i\neq m,n}}^N x_i(t-1) \sum_{k=1}^{K}c_{i,k}(t) M_{i,k}^{(\tilde{\pi})}(t)+x_m(t-1)} \notag \\&& - \sum_{\substack{i=1\\i\neq m,n}}^N x_i(t-1) \sum_{k=1}^{K}c_{i,k}(t) M_{i,k}^{(\pi)}(t)-x_n(t-1)= x_m(t-1) - x_n(t-1) >0~~~~~
\end{eqnarray}
Therefore, $\mathsf{MW}_{\pi}(t) < \mathsf{MW}_{\tilde{\pi}}(t)$.
\end{proof}
\vspace{-3mm}
\subsection{Proof of Lemma \ref{l2}} \label{l13}
\begin{proof}
Without loss of generality, we may convert the bipartite graph $G_t$ to a complete weighted bipartite graph $G'_t$ with $\max\{N,K\}$ vertices in each part. This is done by adding some vertices and edges of zero weight as necessary. In particular, if $N>K$, we will add $N-K$ servers on the right hand side with edges of weight zero to each queue (each vertex on the left hand side). 
If $N<K$, we will add $K-N$ queues on the left hand side with edges of weight zero to each server (each vertex on the right hand side). 
This will not change the operation of the system since the added queues and servers are disconnected from the whole system. 
We denote the sets of vertices on each part of $G'_t$ by $\mathcal{N}'$ and $\mathcal{K}'$, respectively and the set of edges by $\mathcal{E}'$. Consequently, a policy $\pi$ is defined as $\pi=\{\mat{M}^{(\pi)}(t)\}_{t=1}^{\infty}$ where $\mat{M}^{(\pi)}(t)$ is a \textit{perfect matching}\footnote{A perfect matching is a matching that matches all vertices of the graph.} in the complete bipartite graph $G'_t$. We can easily verify that a maximum weighted perfect matching $ \mat{M}^{(\text{MWM})}(t) $ in the complete bipartite graph $G'_t$ is the same as the maximum weighted matching in graph $G_t$ if we remove the added edges of weight zero from matching $\mat{M}^{(\text{MWM})}(t)$.

Consider a policy $\pi$ which is employing perfect matching $\mat{M}^{(\pi)}(t)$ at time slot $t$. Suppose that $\mat{M}^{(\pi)}(t)$ is not a maximum weighted perfect matching on graph $G'_t$, i.e., $\mathsf{MW}_{\pi}(t) <\mathsf{MW}_{\text{MWM}}(t)$. Also, consider a maximum weighted perfect matching $\mat{M}^{(\text{MWM})}(t)$ at time slot $t$. Now, consider these two matchings on $G'_t=(\mathcal{N}',\mathcal{K}',\mathcal{E}')$. Each of $\mat{M}^{(\pi)}(t)$ and $\mat{M}^{({\text{MWM}})}(t)$ corresponds to a distinct sub-graph of $G'_t$ namely ${G'}_t^{(\pi)}=(\mathcal{N}',\mathcal{K}',\mathcal{E}'^{(\pi)})$ and ${G'}_t^{({\text{MWM}})}=(\mathcal{N}',\mathcal{K}',\mathcal{E}'^{({\text{MWM}})})$, respectively. We now build two \textit{\textbf{directed}}, weighted sub-graphs $D_t^{(\pi)}$ and $D_t^{({\text{MWM}})}$ as follows: $D_t^{(\pi)}$ is the same as  ${G'}_t^{(\pi)}$ with all the edges directed from $\cal N'$ to $ \cal K'$ with the same edge weights as ${G'}_t^{(\pi)}$. $D_t^{({\text{MWM}})}$ is the same as  ${G'}_t^{({\text{MWM}})}$ with all the edges directed from $\cal K'$ to $ \cal N'$ with edge weights equal to the \textit{negative} of edge weights of ${G'}_t^{({\text{MWM}})}$. 
Now, consider graph $ U=D_t^{(\pi)} \bigcup D_t^{({\text{MWM}})}$, i.e., the union of the sub-graphs $ D_t^{(\pi)}$ and $D_t^{({\text{MWM}})}$. The graph $U$ can be seen as the union of a number of even cycles\footnote{A cycle with even number of vertices.} denoted by $L$. This is directly concluded from the fact that $ D_t^{(\pi)}$ and $D_t^{({\text{MWM}})}$ are each perfect matchings of $G'_t$ and thus each vertex is incident to an incoming edge and an outgoing edge. Furthermore, for the weight of $U$ shown by $w(U)$, we have 
\begin{eqnarray} \label{weightU}
w(U)=\sum_{\ell \in L}w(\ell) = \mathsf{MW}_{\pi}(t) - \mathsf{MW}_{\text{MWM}}(t)<0.
\end{eqnarray}
In (\ref{weightU}), $w(\ell)$ is the weight of edge $\ell$ in $L$. Therefore, there must exist a negative cycle\footnote{A cycle whose total edge weight is negative.} in $U$. We denote this negative cycle by $\ell^{\star}$. The cycle $\ell^{\star}$ is an even cycle and contains an even number of nodes and edges. We assume that $\ell^{\star}$ contains $2W$ nodes ($W$ nodes from $\cal N'$ and $W$ nodes from $\cal K'$) and also $2W$ edges. Let us denote the nodes of sets $\cal N'$ and $\cal K'$ that form $\ell^\star$ by $n_1,n_2,\dots,n_W$ and $k_1,k_2,\dots,k_W$, respectively. Thus, the cycle $\ell^\star$ can be represented by the sequence of its edges as $\ell^\star=e_{n_1,k_1},e_{k_1,n_2},e_{n_2,k_2},e_{k_2,n_3},\dots,e_{k_{W-1},n_W},$ $e_{n_{W},k_W},
e_{k_{W},n_1} $ (see Figure \ref{cycle}).
\begin{figure}[tp]
    \centering
    \psfrag{a}[][][.9]{$n_1$}
    \psfrag{b}[][][.9]{$n_2$}
    \psfrag{c}[][][.9]{$n_W$}
    \psfrag{d}[][][.9]{$k_1$}
    \psfrag{e}[][][.9]{$k_2$}
    \psfrag{f}[][][.9]{$k_W$}
    \includegraphics[width=.26\textwidth]{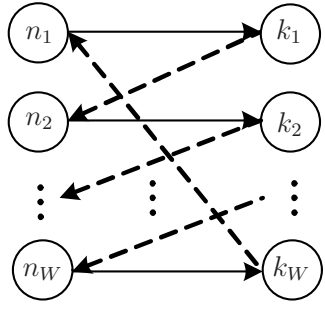}
    \caption{The negative cycle $\ell^\star$}\vspace{-7mm}
    \label{cycle}
\end{figure} 

The edges $e_{k_1,n_2},e_{k_2,n_3},\dots,e_{k_W,n_1}$ belonging to $ D_t^{({\text{MWM}})} $ have negative weights while the edges $e_{n_1,k_1},e_{n_2,k_2},\dots,e_{n_W,k_W}$ belonging  to $ D_t^{(\pi)} $ have positive weights. We can also represent the edges of $\ell^\star$ by pairing the edges incident to each node $n_1,n_2,\dots,n_W$ as follows;
$\ell^\star=(e_{k_W,n_1},e_{n_1,k_1}),$ $(e_{k_1,n_2},e_{n_2,k_2}),\dots,(e_{{k_{W-1},n_W}},e_{n_{W},k_{W}})$. 
We label each pair of edges as follows:
 $r_{n_1}=(e_{k_W,n_1},e_{n_1,k_1}),$ $r_{n_2}=(e_{k_1,n_2},e_{n_2,k_2}),\dots,r_{n_W}=
(e_{{k_{W-1},n_W}},e_{n_{W},k_{W}})$. All $ r_{n_1},r_{n_2},\dots,r_{n_W} $ pairs in $\ell^\star$ have the following property:
The weights associated to the edges of each pair $r_{n_i}$, $1\leq i \leq W$ is either $(0,0)$, $(0,x_{n_i}(t-1))$,$(-x_{n_i}(t-1),0)$ or $(-x_{n_i}(t-1),x_{n_i}(t-1))$. Accordingly, we will specify three types of edge pairs as follows:
\begin{itemize}
\item \textbf{Type 0 (T0)}: pairs with edge weights $(0,0)$ or $(-x_{n_i}(t-1),x_{n_i}(t-1))$, $1\leq i \leq W$ .
\item \textbf{Type 1 (T1)}: pairs with edge weights $(-x_{n_i}(t-1),0)$, $1\leq i \leq W$ and $x_{n_i}(t-1)>0$.
\item \textbf{Type 2 (T2)}: pairs with edge weights $(0,x_{n_i}(t-1))$, $1\leq i \leq W$ and $x_{n_i}(t-1)>0$.
\end{itemize}
%

Since $\ell^{\star}$ is a negative cycle, there must exist a node $n_j$ with edge pairs $(e_{k_{j-1},n_j},e_{n_j,k_j})=(-x_{n_j}(t-1),0)$ and $ x_{n_j}(t-1)>0 $ (if $j=1$, $(e_{k_{W},n_1},e_{n_1,k_1})=(-x_{n_1}(t-1),0)$ and $x_{n_1}(t-1)>0$). Obviously, the edge pair associated with node $n_j$ is of type \textbf{T1}.
We now consider the following three, exhaustive cases:
\begin{enumerate}[]
\item \textit{Case 1}: All the edge pairs other than $r_{n_j}$ are of type \textbf{T0}. In this case, by replacing the edges of $ D_t^{(\pi)} $ by the edges of $ D_t^{({\text{MWM}})} $ in the negative cycle $\ell^\star$ we obtain a server reallocation at time slot $t$. This server reallocation is balancing as in $\pi$ queue $n_{j}$ was not receiving service while after the server reallocation it is. Therefore, if $\tilde{\mat{x}}'(t)$ is the queue length vector after server reallocation we have $\tilde{\mat{x}}'(t) \leq \mat{x}'(t)$ and $\tilde{x}'_{n_j}(t)<{x}'_{n_j}(t)$ (satisfying condition \textbf{C1}).

\item \textit{Case 2}: If we trace backward the edge pairs of cycle $\ell^\star$ from $r_{n_{j}}$ and the first non-\textbf{T0} pair is a \textbf{T1} pair. Let $r_{n_{j'}}$ be such a \textbf{T1} edge pair. In other words, in cycle $\ell^{\star}$ the pairs $r_{n_{j'}}$ and $r_{n_{j}}$ are of type \textbf{T1} while other pairs \textit{between them} are of type \textbf{T0} (see Figure \ref{cases:2}).
In this case, by replacing the edges of $ D_t^{(\pi)} $ by the edges of $ D_t^{({\text{MWM}})} $ just for nodes $n_{j'+1}$ to $n_{j}$ of cycle $\ell^\star$ and allocating $k_j$ to $n_{j'}$ we obtain a server reallocation at time slot $t$. This server reallocation is balancing as in $\pi$ queue $n_{j}$ does not receive service while after the server reallocation it does. Furthermore, queue $n_{j'}$ was not being served in $\pi$ but 
under the new server reallocation it may get service (depending on the weight of edge $e_{k_{j},n_{j'}}$) and the service of other queues is not disturbed. Therefore, if $\tilde{\mat{x}}'(t)$ is the intermediate queue length vector after server reallocation, we have $\tilde{\mat{x}}'(t) \leq \mat{x}'(t)$ and $\tilde{x}'_{n_j}(t)<x'_{n_j}(t)$ (satisfying condition \textbf{C1}).
 
\begin{figure}
  \centering
  	\psfrag{l}[][][1]{$0$}
    \psfrag{m}[][][.9]{$x_{n_{j'}}(t-1)$}
    \psfrag{n}[][][.9]{$-x_{n_j}(t-1)$}
    \psfrag{o}[][][1]{$0$}
    \psfrag{p}[][][1]{$k_{j'}$}
    \psfrag{q}[][][1]{$k_j$}
    \psfrag{r}[][][1]{$n_{j'}$}
    \psfrag{s}[][][1]{$n_j$}
    \psfrag{t}[][][1]{$T_2$}
    \psfrag{u}[][][1]{$T_0$}
    \psfrag{v}[][][1]{$T_1$}
    \psfrag{a}[][][.9]{$-x_{n_{j'}}(t-1)$}
    \psfrag{b}[][][1]{$0$}
    \psfrag{c}[][][.9]{$-x_{n_{j}}(t-1)$}
    \psfrag{d}[][][1]{$0$}
    \psfrag{e}[][][1]{$k_{j'}$}
    \psfrag{f}[][][1]{$k_j$}
    \psfrag{g}[][][1]{$n_{j'}$}
    \psfrag{h}[][][1]{$n_j$}
    \psfrag{i}[][][1]{$T_1$}
    \psfrag{j}[][][1]{$T_0$}
    \psfrag{k}[][][1]{$T_1$}
  \subfloat[Case 2]{\label{cases:2}\includegraphics[width=0.33\textwidth]{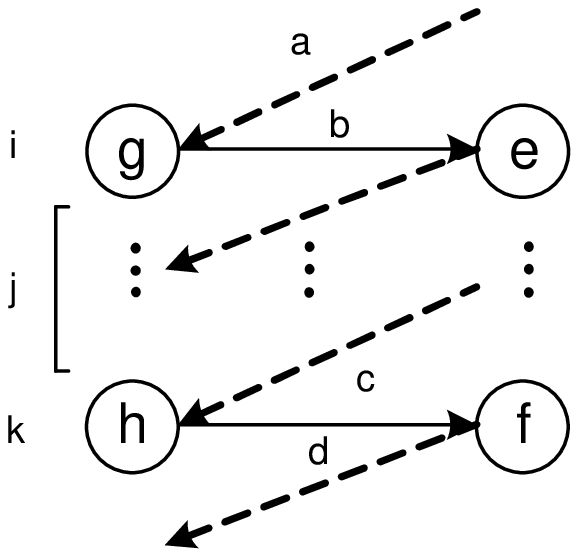}}              
  \subfloat[Case 3]{\label{cases:3}\includegraphics[width=0.33\textwidth]{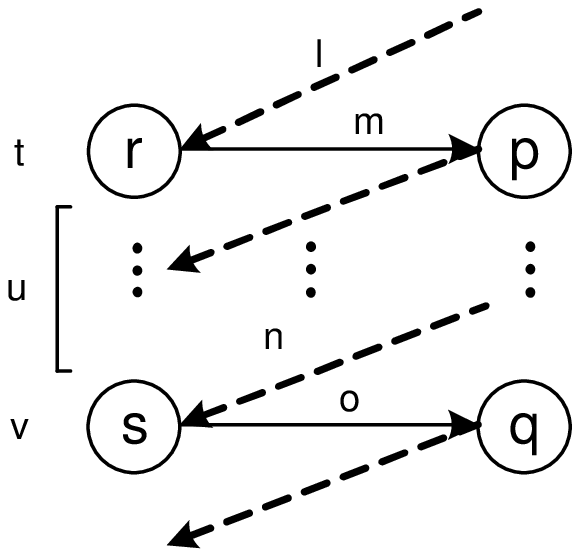}}
  \caption{Cases 2 and 3 in the proof of Lemma \ref{l2}}\vspace{-7mm}
  \label{examples_l2}
\end{figure}
\item \textit{Case 3}: If we go backward on the edge pairs of cycle $\ell^\star$ from $r_{n_{j}}$ and the first non-\textbf{T0} pair is a \textbf{T2} pair. Let $r_{n_{j'}}$ be such a \textbf{T2} edge pair. In other words, in cycle $\ell^{\star}$, the pair $r_{n_{j'}}$ is of type \textbf{T2} and $r_{n_{j}}$ is of type \textbf{T1} while other pairs between them are of type \textbf{T0} (see Figure \ref{cases:3}).
First of all, we claim that $x_{n_{j'}}(t-1)\leq x_{n_{j}}(t-1)$. If $x_{n_{j'}}(t-1)> x_{n_{j}}(t-1)$, by replacing the edges of $ D_t^{({\text{MWM}})} $ by the edges of $ D_t^{(\pi)} $ just for nodes $n_{j'}$ to $n_{j-1}$ of the cycle $\ell^\star$ and not serving queue $n_{j}$, without disturbing the service of other queues we obtain a server reallocation at time slot $t$ with larger $\mathsf{MW}$ index than $\mathsf{MW} _{\text{MWM}}(t)$ and this contradicts the fact that MWM has the maximum $\mathsf{MW}$ index at time slot $t$.
Accordingly, we consider the following two sub-cases:
\begin{enumerate}[]
\item \textit{Sub-case 3.1}: $x_{n_{j}}(t-1)>x_{n_{j'}}(t-1)$. In this case, we replace the edges of $ D_t^{(\pi)} $ by the edges of $ D_t^{({\text{MWM}})} $ just for queues $n_{j'+1}$ to $n_{j}$ of the cycle $\ell^\star$ and allocate server $k_{j}$ to queue $n_{j'}$. Server $k_{j}$ may or may not serve queue $n_{j'}$. We consider the worst case where it does not serve this queue ($w_{n_{j'},k_{j}}=0$). Thus, without disturbing the service of other queues we obtain a server reallocation with the following property: all the queues other than $n_{j}$ and $n_{j'}$ have the same service as before; queue $n_{j}$ which was receiving zero service in $\pi$, now receives one packet service and queue $n_{j'}$ which was receiving one packet service, now in the worst case loses its service. Therefore, this server reallocation is balancing as the queue length after server reallocation (denoted by $\tilde{\mat{x}}'(t)$) and the queue length after applying policy $\pi$ (denoted by $x'(t)$) satisfy condition \textbf{C2}. So, $\tilde{\mat{x}}'(t) \preceq \mat{x}'(t)$ and therefore the applied server reallocation is a balancing one.
\item \textit{Sub-case 3.2}: $x_{n_{j}}(t-1)=x_{n_{j'}}(t-1)$. In this case, the sequence of edge pairs $r_{n_{j'}},\dots,r_{n_{j}}$ contribute ``0'' in the calculation of $w(U)$. Therefore, we may treat them as a sequence of edge pairs of type \textbf{T0}. By doing so, we have a new negative cycle $\ell'^\star$ which is the same as $\ell^\star$ but the sequence of edge pairs $r_{n_{j'}},\dots,r_{n_{j}}$ are replaced by the same number of edge pairs of type \textbf{T0}. Cycle $\ell'^\star$ is a negative cycle with the same weight as $\ell^\star$.  Therefore, there must exist an edge pair of type \textbf{T1} in $\ell'^\star$ (In this case, it is not possible for $\ell'^\star$ to have just edge pairs of type \textbf{T0} as it results in zero weight for the cycle) and all the cases of 1, 2 and 3 apply for $\ell'^\star$ as well. As long as case 3.2 is true, we always obtain a new negative cycle for which we will have one of the cases 1,2 and 3 satisfied where we can determine a balancing server reallocation. 
\end{enumerate}
\end{enumerate}
Cases 1, 2 and 3 cover all the possible instances of negative cycle $\ell^\star$ for all of which we proved that there exists a balancing server reallocation. In summary, we proved that if the policy $\pi$ does not employ a maximum weighted matching at time slot $t$, there exists a negative cycle in $U$, the graph obtained by taking the union of the matchings of $\pi$ and MWM. We proved that by reallocation of the servers involved in the negative cycle, we always can find a balancing server reallocation for policy $\pi$.
\end{proof}


\section{Proof of Lemmas \ref{l3} and \ref{lnew}  } 


\subsection{Proof of Lemma \ref{l3}} \label{l14}
\begin{proof}
Fix any arbitrary policy $ \pi \in \Pi_t^h$ where $h=h_t^{\pi}>0$, and any arbitrary sample path $\mat{\omega}=(\mat{x}(0),\mat{c}(1),\mat{a}(1),$ $\mat{x}(1),\mat{c}(2),\mat{a}(2),\mat{x}(2),\dots)$ of the underlying random variables $(\mat{X}(0),\mat{C}(1),$ $\mat{A}(1),\mat{X}(1),\mat{C}(2),$ $\mat{A}(2),\mat{X}(2), \dots)$. We apply the coupling method to construct from $\mat{\omega}$ a new sample path $\tilde{\mat{\omega}} = (\tilde{\mat{x}}(0), \tilde{\mat{c}}(1), \tilde{\mat{a}}(1), \tilde{\mat{x}}(1),$ $\tilde{\mat{c}}(2), \tilde{\mat{a}}(2),\tilde{\mat{x}}(2), \dots)$ resulting in a new sequence of random variables  
$(\tilde{\mat{X}}(0),$ $\tilde{\mat{C}}(1), \tilde{\mat{A}}(1), \tilde{\mat{X}}(1), \tilde{\mat{C}}(2), \tilde{\mat{A}}(2),\tilde{\mat{X}}(2),\dots)$\hspace{1pt}with
$\mat{X}(0)=\tilde{\mat{X}}(0)$. Recall that $\mat{X}(0)$ is the queue length vector in which the system starts. We denote the policy defined on the new sample path $\tilde{\mat{\omega}}$ by $\tilde{\pi}$. In fact, we construct $\tilde{\mat{\omega}}$ and $\tilde{\pi}\in \Pi_t^{h-1}$ in such a fashion that \textit{for all} the sample paths we have $\tilde{\mat{x}}(t') \preceq_{p} \mat{x}(t')$ for all $t' = 1,2,\dots$.
Therefore, for any $f \in \cal F$ we have $f(\tilde{\mat{x}}(t')) \leq f(\mat{x}(t'))$ for all $t'$. As it will be shown, the processes $\{(\mat{C}(t'),\mat{A}(t'))\}_{t'=1}^{\infty}$ and $\{(\tilde{\mat{C}}(t'),\tilde{\mat{A}}(t'))\}_{t'=1}^{\infty}$ are the same in distribution (these processes are permutation invariant). Thus, the process $f(\tilde{\mat{X}})=\{f(\tilde{\mat{X}}(t'))\}_{t'=1}^{\infty} $ obtained by applying policy $\tilde{\pi}$ to the system is stochastically smaller than $f({\mat{X}})= \{f(\mat{X}(t'))\}_{t'=1}^{\infty}$, i.e., $f(\tilde{\mat{X}}) \leq_{st}  f(\mat{X})) $ and $\tilde{\pi}$ dominates $\pi$.
%
%
%

%
%
%

Therefore, in the following, our goal will be to construct $\tilde{\pi}$ and $\tilde{\mat{\omega}}$ such that $\tilde{\mat{x}}(t') \preceq _p \mat{x}(t')$ for all time slots.
In the proof, we always use the tilde notation for all random variables that belong to the new system. The construction of $\tilde{\pi}$ is done in two steps:

\textbf{Step 1: Construction of $\tilde{\pi}$ for $\tau \leq t$}: To construct the new sample path $\tilde{\mat{\omega}}$ we let the arrival, connectivity and the policy be the same as the first system until time slot $t-1$ , i.e., $\tilde{\mat{c}}(\tau) =\mat{c}(\tau)$, $\tilde{\mat{a}}(\tau) =\mat{a}(\tau)$ and $\mat{M}^{(\pi)}(\tau)=\mat{M}^{(\tilde{\pi})}(\tau)$ for $\tau \leq t-1$. Thus, the resulting queue lengths at the beginning of time slot $t$ (or at the end of time slot $t-1$) are equal, i.e., $\tilde{\mat{x}}(t-1)=\mat{x}(t-1)$. 

We now consider the construction of $\tilde{\mat{\omega}}$ and $\tilde{\pi}$ for time slot $t$. 
Since $\pi \in \Pi^{h}_{t}$ and $h>0$, according to Lemma \ref{l2} there exists a balancing server reallocation such that either \textbf{C1} or \textbf{C2} is satisfied. Thus, we consider two cases:

\textit{Case 1}: After applying the balancing server reallocation, condition \textbf{C1} is satisfied. In other words, there exists a matching such that if applied on the queue length $\tilde{\mat{x}}(t-1)=\mat{x}(t-1)$ at time slot $t$, we get $\tilde{\mat{x}}'(t)$ such that $\tilde{\mat{x}}'(t) \leq \mat{x}'(t)$. We denote such a matching by $\mat{M}^{(\tilde{\pi})}(t)$.
In this case, we let $\tilde{\mat{c}}(t)=\mat{c}(t)$ and $\tilde{\mat{a}}(t)=\mat{a}(t)$ and we apply $\mat{M}^{(\tilde{\pi})}(t)$ at time slot $t$, i.e., arrivals and connectivities are the same in both systems and policy $\tilde{\pi}$ acts at time slot $t$. So, we can easily check that $\tilde{\mat{x}}(t) \leq \mat{x}(t)$ and therefore $\tilde{\mat{x}}(t) \preceq \mat{x}(t)$.

\textit{Case 2}: After applying the balancing server reallocation, condition \textbf{C2} is satisfied. In other words, there exists a matching such that if applied on the system at time slot $t$, we get $\tilde{\mat{x}}'(t)$ which is different from $\mat{x}'(t)$ in two elements $m$ and $n$ such that $x'_n(t) <\tilde{x}'_n(t)\leq \tilde{x}'_m(t)< x'_m(t)$ and the following constraints are satisfied: $\tilde{x}'_n(t)=x'_n(t)+1$ and $\tilde{x}'_m(t)=x'_m(t)-1$.
We call such a matching by $\mat{M}^{(\tilde{\pi})}(t)$. In this case, we let $\tilde{\mat{c}}(t)=\mat{c}(t)$ and $\tilde{\mat{a}}(t)=\mat{a}(t)$ and we apply $\mat{M}^{(\tilde{\pi})}(t)$ at time slot $t$, i.e., arrivals and connectivities are the same in both systems and policy $\tilde{\pi}$ acts at time slot $t$. We consider all the following conditions for arrivals to queues $m$ and $n$ as follows:
\begin{itemize}
\item If there is no arrival or there is an arrival to both queues $m$ and $n$ (i.e., $a_m(t)=a_n(t)=0$ or $a_m(t)=a_n(t)=1$), we conclude that $\tilde{\mat{x}}(t)$ and $ \mat{x}(t)$ satisfy condition \textbf{D3}. Thus, $\tilde{\mat{x}}(t) \preceq \mat{x}(t)$.
\item  If there is an arrival to queue $m$ but not $n$ (i.e., $a_m(t)=1$, $a_n(t)=0$), we conclude that $\tilde{\mat{x}}(t)$ and $ \mat{x}(t)$ satisfy condition \textbf{D3}. Thus, $\tilde{\mat{x}}(t) \preceq \mat{x}(t)$.
\item If there is an arrival to queue $n$ but not $m$ (i.e., $a_m(t)=0$, $a_n(t)=1$) and $\tilde{x}_m(t)=\tilde{x}_n(t)$, we conclude that $\tilde{\mat{x}}(t)$ and $ \mat{x}(t)$ satisfy condition \textbf{D2}. Thus, $\tilde{\mat{x}}(t) \preceq \mat{x}(t)$.
\item  If there is an arrival to queue $n$ but not $m$ (i.e., $a_m(t)=0$, $a_n(t)=1$) and $\tilde{x}_n(t)<\tilde{x}_m(t)$, we conclude that $\tilde{\mat{x}}(t)$ and $ \mat{x}(t)$ satisfy condition \textbf{D3}. Thus, $\tilde{\mat{x}}(t) \preceq \mat{x}(t)$. 
\end{itemize} 
In all the cases we can see that $\tilde{\mat{x}}(t) \preceq \mat{x}(t)$.
The obtained policy $\tilde{\pi}$ belongs to $\Pi_t^{h-1}$ since we applied a balancing server reallocation to the matching employed in $\pi$ at time slot $t$.

\textbf{Step 2: Construction of $\tilde{\pi}$ for $\tau > t$}:
In this step, we focus on construction of $\tilde{\mat{\omega}}$ and $\tilde{\pi}$ for $\tau > t$. We will employ mathematical induction to achieve this goal. In particular, we assume that $\tilde{\mat{\omega}}$ and $\tilde{\pi}$ are constructed up to time slot $\tau$ ($\tau \geq t$) such that $\tilde{\mat{x}}(\tau) \preceq \mat{x}(\tau)$ i.e., one of the conditions \textbf{D1}, \textbf{D2} and \textbf{D3} is satisfied for $\mat{x}(\tau)$ and $\tilde{\mat{x}}(\tau)$. We will prove that policy $\tilde{\pi}$ and sample path $\tilde{\mat{\omega}}$ can be constructed such that $\tilde{\mat{x}}(\tau+1) \preceq \mat{x}(\tau+1)$ (i.e., one of the conditions \textbf{D1}, \textbf{D2} and \textbf{D3} is satisfied for $\mat{x}(\tau+1)$ and $\tilde{\mat{x}}(\tau+1)$). Accordingly, we consider three cases corresponding to each condition \textbf{D1}, \textbf{D2} or \textbf{D3} at time slot $\tau$: 

\textit{Case 1}: $\tilde{\mat{x}}(\tau) \leq \mat{x}(\tau)$. In this case, the construction of $\tilde{\mat{\omega}}$ and $\tilde{\pi}$ at time slot $\tau+1$ is straightforward. We let $\tilde{\mat{c}}(\tau+1)=\mat{c}(\tau+1)$ and $\tilde{\mat{a}}(\tau+1)=\mat{a}(\tau+1)$ and $\mat{M}^{(\tilde{\pi})}(\tau+1)=\mat{M}^{(\pi)}(\tau+1) $. Thus, $\tilde{\mat{x}}(\tau+1) \leq \mat{x}(\tau+1)$ and therefore $\tilde{\mat{x}}(\tau+1) \preceq \mat{x}(\tau+1)$.

\textit{Case 2}: $\tilde{\mat{x}}(\tau)$ is obtained from $\mat{x}(\tau)$ by permutation of two distinct elements $m$ and $n$. In this case, we let $\tilde{c}_{n,k}(\tau+1)=c_{m,k}(\tau+1)$ and $\tilde{c}_{m,k}(\tau+1)=c_{n,k}(\tau+1)$ for $k=1,2,\dots,K$; $\tilde{c}_{i,k}(\tau+1)=c_{i,k}(\tau+1)$ for all $i \in \cal N$, $i\neq n,m$ and $k=1,2,\dots,K$; $\tilde{a}_n(\tau+1)=a_m(\tau+1)$, $\tilde{a}_m(\tau+1)=a_n(\tau+1)$ and $\tilde{a}_i(\tau+1)=a_i(\tau+1)$ for $i \in \cal N$, $i\neq n,m$. Suppose that $\mat{M}^{(\pi)}(\tau+1)=(M^{(\pi)}_{n,k}(\tau+1))~\forall n \in \mathcal{N}, k \in \mathcal{K}$ be the employed matching by policy $\pi$ at time slot $\tau+1$. We construct $\mat{M}^{(\tilde{\pi})}(\tau+1)$ as follows: Let $M^{(\tilde{\pi})}_{i,k}(\tau+1)=M^{(\pi)}_{i,k}(\tau+1)$ for $i \in \cal N$, $i\neq n,m$ and also let $M^{(\tilde{\pi})}_{n,k}(\tau+1)=M^{(\pi)}_{m,k}(\tau+1)$ and $M^{(\tilde{\pi})}_{m,k}(\tau+1)=M^{(\pi)}_{n,k}(\tau+1)$ for $k=1,2,\dots,K$. As a result, $\tilde{\mat{x}}(\tau+1)$ and $\mat{x}(\tau+1)$ satisfy condition \textbf{D2} at time slot $\tau+1$ and therefore $\tilde{\mat{x}}(\tau+1) \preceq \mat{x}(\tau+1) $.

\textit{Case 3}: $\tilde{\mat{x}}(\tau)$ is obtained from $\mat{x}(\tau)$ by performing a balancing interchange of two distinct elements $m$ and $n$ as defined in condition \textbf{D3}. In particular, $\tilde{\mat{x}}(\tau)$ and $\mat{x}(\tau)$ are different in only two elements $n$ and $m$ such that $x_n(\tau) <\tilde{x}_n (\tau)\leq\tilde{x}_m (\tau)< x_m(\tau)$ and the following constraints are satisfied: $\tilde{x}_n(\tau)=x_n(\tau)+1$ and $\tilde{x}_m(\tau)=x_m(\tau)-1$.
In this case, we consider the following sub-cases:

\textit{Sub-case 3.1}: $\tilde{x}_n (\tau) < \tilde{x}_m (\tau)-1$: In this case, we let $\tilde{\mat{c}}(\tau+1)=\mat{c}(\tau+1)$ and $\tilde{\mat{a}}(\tau+1)=\mat{a}(\tau+1)$ and we let $\mat{M}^{(\tilde{\pi})}(\tau+1)=\mat{M}^{(\pi)}(\tau+1)$. Thus, if $x_n(\tau)=0$ and queue $n$ is served, condition \textbf{D1} is satisfied at $\tau+1$. Otherwise, $\tilde{\mat{x}}(\tau+1)$ is obtained from $\mat{x}(\tau+1)$ by performing a balancing interchange of elements $m$ and $n$. Therefore $\tilde{\mat{x}}(\tau+1) \preceq \mat{x}(\tau+1) $.

\textit{Sub-case 3.2:} $\tilde{x}_n (\tau) = \tilde{x}_m (\tau)-1$:  In this case again we let $\tilde{\mat{c}}(\tau+1)=\mat{c}(\tau+1)$ and $\tilde{\mat{a}}(\tau+1)=\mat{a}(\tau+1)$ and we let $\mat{M}^{(\tilde{\pi})}(\tau+1)=\mat{M}^{(\pi)}(\tau+1)$. Thus, if $x_n(\tau)=0$ and queue $n$ is served, condition \textbf{D1} is satisfied at $\tau+1$. If queue $m$ gets service, queue $n$ does not get service, there is an arrival to queue $n$ and no arrival to queue $m$, then $\tilde{x}_n (\tau+1)=x_m (\tau+1)$ and $\tilde{x}_m (\tau+1)=x_n (\tau+1)$. Therefore, $ \tilde{\mat{x}}(\tau+1)$ and $\mat{x}(\tau+1)$ satisfy condition \textbf{D2} and $\tilde{\mat{x}}(\tau+1) \preceq \mat{x}(\tau+1) $. Otherwise, $ \tilde{\mat{x}}(\tau+1)$ and $\mat{x}(\tau+1)$ satisfy condition \textbf{D3} and $\tilde{\mat{x}}(\tau+1) \preceq \mat{x}(\tau+1) $.

\textit{Sub-case 3.3}: $\tilde{x}_n (\tau) = \tilde{x}_m (\tau)$:
In this case, we let $\tilde{\mat{c}}(\tau+1)=\mat{c}(\tau+1)$ and $\mat{M}^{(\tilde{\pi})}(\tau+1)=\mat{M}^{(\pi)}(\tau+1)$. Now, we consider the following cases to determine the arrivals at time slot $\tau+1$.
\begin{itemize}
\item If $x_n(\tau)>0$ and both queues $m$ and $n$ or none of them get service at time slot $\tau+1$, we let $\tilde{\mat{a}}(\tau+1)=\mat{a}(\tau+1)$. Therefore, if $a_m(\tau+1)=0$ and $a_n(\tau+1)=1$, $\tilde{\mat{x}}(\tau+1)$ and $\mat{x}(\tau+1)$ satisfy condition \textbf{D2} and thus $\tilde{\mat{x}}(\tau+1) \preceq \mat{x}(\tau+1)$. Otherwise, $\tilde{\mat{x}}(\tau+1)$ and $\mat{x}(\tau+1)$ satisfy condition \textbf{D3} and thus $\tilde{\mat{x}}(\tau+1) \preceq \mat{x}(\tau+1)$.

\item If $x_n(\tau)>0$ and queue $n$ gets service at time slot $\tau+1$ and queue $m$ does not get service at time slot $\tau+1$, we let $\tilde{\mat{a}}(\tau+1)=\mat{a}(\tau+1)$. Therefore, $\tilde{\mat{x}}(\tau+1)$ and $\mat{x}(\tau+1)$ satisfy condition \textbf{D3} and thus $\tilde{\mat{x}}(\tau+1) \preceq \mat{x}(\tau+1)$.

\item If $x_n(\tau)>0$ and queue $m$ gets service at time slot $\tau+1$ and queue $n$ does not get service at time slot $\tau+1$, we let $\tilde{a}_m(\tau+1)=a_n(\tau+1)$ and $\tilde{a}_n(\tau+1)=a_m(\tau+1)$ and $\tilde{a}_i(\tau+1)=a_i(\tau+1)$ for $i\in\cal N$ and $i \neq m,n$. Therefore, $\tilde{\mat{x}}(\tau+1)$ and $\mat{x}(\tau+1)$ satisfy condition \textbf{D2} and thus $\tilde{\mat{x}}(\tau+1) \preceq \mat{x}(\tau+1)$.

\item If $x_n(\tau)=0$ and queue $n$ gets service at time slot $\tau+1$ (although it does not have any packet to be served), we let $\tilde{\mat{a}}(\tau+1)=\mat{a}(\tau+1)$. Therefore, $\tilde{\mat{x}}(\tau+1)$ and $\mat{x}(\tau+1)$ satisfy condition \textbf{D1} and thus $\tilde{\mat{x}}(\tau+1) \preceq \mat{x}(\tau+1)$.

\item If $x_n(\tau)=0$ and queue $m$ gets service at time slot $\tau+1$ and queue $n$ does not get service at time slot $\tau+1$, we let $\tilde{a}_m(\tau+1)=a_n(\tau+1)$ and $\tilde{a}_n(\tau+1)=a_m(\tau+1)$ and $\tilde{a}_i(\tau+1)=a_i(\tau+1)$ for $i\in\cal N$ and $i \neq m,n$. Therefore, $\tilde{\mat{x}}(\tau+1)$ and $\mat{x}(\tau+1)$ satisfy condition \textbf{D2} and thus $\tilde{\mat{x}}(\tau+1) \preceq \mat{x}(\tau+1)$.

\item If $x_n(\tau)=0$ and neither queue $m$ nor $n$ gets service at time slot $\tau+1$, we let $\tilde{\mat{a}}(\tau+1)=\mat{a}(\tau+1)$. If $a_m(\tau+1)=0$ and $a_n(\tau+1)=1$, $\tilde{\mat{x}}(\tau+1)$ and $\mat{x}(\tau+1)$ satisfy condition \textbf{D2} and thus $\tilde{\mat{x}}(\tau+1) \preceq \mat{x}(\tau+1)$. Otherwise, $\tilde{\mat{x}}(\tau+1)$ and $\mat{x}(\tau+1)$ satisfy condition \textbf{D3} and thus $\tilde{\mat{x}}(\tau+1) \preceq \mat{x}(\tau+1)$.
\end{itemize} 
The above cases cover all the possible cases for all of which we constructed $\tilde{\mat{\omega}}$ and $\tilde{\pi}$ such that $\tilde{\mat{x}}(\tau+1) \preceq \mat{x}(\tau+1)$.
 
According to steps 1 and 2, from any sample path $\mat{\omega}$ and any arbitrary policy $\pi \in \Pi_t^h $, $h=h_t^\pi>0$, we can construct a sample path $\tilde{\mat{\omega}}$ and a policy $\tilde{\pi} \in \Pi_t^{h-1}$ such that at all time slots we have $\tilde{\mat{x}}(t') \preceq_p \mat{x}(t')$. Therefore, $f(\tilde{\mat{x}}(t')) \leq f(\mat{x}(t'))$. Consequently, the process $f(\tilde{\mat{X}})=\{f(\tilde{\mat{X}}(t'))\}_{t'=1}^{\infty} $ obtained by applying policy $\tilde{\pi}$ to the system is stochastically smaller than $f({\mat{X}}) =\{f(\mat{X}(t'))\}_{t'=1}^{\infty}$, i.e., $f(\tilde{\mat{X}}) \leq_{st}  f(\mat{X}) $ and therefore $\tilde{\pi} \in \Pi_t^{h-1}$ dominates $\pi \in \Pi_t^{h}$.
\end{proof}

\subsection{Proof of Lemma \ref{l3} for the System with Random Service Failures} \label{wsf}
\begin{proof}
The only difference of the proof in this case from the proof of Lemma \ref{l3} is that we have to consider random variables $Q_{n,k}(t)~\forall n \in \mathcal{N},\forall k \in \mathcal{K}$ in our dynamic coupling argument. Therefore, for the arbitrary sample path 
$\mat{\omega}=(\mat{x}(0),\mat{c}(1),\mat{q}(1),\mat{a}(1),\mat{x}(1),\mat{c}(2),\mat{q}(2),\mat{a}(2),$ $\mat{x}(2),...)$,
 we apply the coupling method to construct from $\mat{\omega}$ a new sample path $\tilde{\mat{\omega}} = (\tilde{\mat{x}}(0), $ $\tilde{\mat{c}}(1), \tilde{\mat{q}}(1), \tilde{\mat{a}}(1), \tilde{\mat{x}}(1), \tilde{\mat{c}}(2),\tilde{\mat{q}}(2), \tilde{\mat{a}}(2),\tilde{\mat{x}}(2), ...)$ resulting in a new sequence of random variables $(\tilde{\mat{X}}(0), $ $ \tilde{\mat{C}}(1),\tilde{\mat{Q}}(1), \tilde{\mat{A}}(1), \tilde{\mat{X}}(1), \tilde{\mat{C}}(2),\tilde{\mat{Q}}(2), \tilde{\mat{A}}(2),\tilde{\mat{X}}(2), ...)$ with $\mat{X}(0)=\tilde{\mat{X}}(0)$. We will follow the same approach and consider the same cases. Before we proceed to the details we introduce the following complementary notation. 
Suppose that $s^{(\pi)}_n(t)$ denotes the index of the server assigned to queue $n$ at time slot $t$ by policy $\pi$. Note that $s^{(\pi)}_n(t)\in \cal K$ or it is empty.

\textbf{Step 1: Construction of $\tilde{\pi}$ for $\tau \leq t$}: In this case, as in the proof of Lemma \ref{l3} we first consider $\tau \leq t-1$. For those slots, we let all $\tilde{\mat{c}}(\tau)$, $\tilde{\mat{q}}(\tau)$, $\tilde{\mat{a}}(\tau)$ and $\mat{M}^{(\tilde{\pi})}(\tau)$ to be the same in both systems i.e., $\tilde{\mat{c}}(\tau) =\mat{c}(\tau)$, $\tilde{\mat{q}}(\tau) =\mat{q}(\tau)$, $\tilde{\mat{a}}(\tau) =\mat{a}(\tau)$ and $\mat{M}^{(\tilde{\pi})}(\tau)=\mat{M}^{(\pi)}(\tau)$ and therefore we have $\tilde{\mat{x}}(t-1)=\mat{x}(t-1)$. 

At time slot $t$, 
the distance of policy $\pi$ to $\Pi_{t}$ is $h$ balancing server reallocations. Since $\pi \in \Pi^{h}_{t}$ and $h>0$, according to Lemma \ref{l2} there exists a balancing server reallocation such that either \textbf{C1} or \textbf{C2} is satisfied. Thus, we consider two cases:

\textit{Case 1}: After applying the balancing server reallocation, condition \textbf{C1} is satisfied. In other words, there exists a matching $\mat{M}^{(\tilde{\pi})}(t)$ such that if applied on the queue lengths $\tilde{\mat{x}}(t-1)=\mat{x}(t-1)$, we get $\tilde{\mat{x}}'(t)$ such that $\tilde{\mat{x}}'(t) \leq \mat{x}'(t)$. 
Note that if a non-empty queue is served under $\mat{M}^{(\pi)}(t)$, it should also get service under $\tilde{\pi}$. Otherwise, the condition $\tilde{\mat{x}}'(t) \leq \mat{x}'(t)$ will be violated. 
In this case, we let $\tilde{\mat{c}}(t)=\mat{c}(t)$ and $\tilde{\mat{a}}(t)=\mat{a}(t)$ and we apply $\mat{M}^{(\tilde{\pi})}(t)$ at time slot $t$.
For any non-empty queue $n$ that is served under both policies $\pi$ and $\tilde{\pi}$ we let $\tilde{q}_{n,s^{(\tilde{\pi})}_n(t)}(t)=q_{n,s^{(\pi)}_n(t)}(t)$ and $\tilde{q}_{n,s^{({\pi})}_n(t)}(t)=q_{n,s^{(\tilde{\pi})}_n(t)}(t)$. In other words, we let each non-empty queue $n$ which was being served in both systems experience the same service failure. For other variables $\tilde{q}_{n,k}(t)$ we let $\tilde{q}_{n,k}(t)=q_{n,k}(t)$.
 Then, we can easily check that $\tilde{\mat{x}}(t) \leq \mat{x}(t)$ and therefore $\tilde{\mat{x}}(t) \preceq \mat{x}(t)$.

\textit{Case 2}: After applying the balancing server reallocation, condition \textbf{C2} is satisfied. In other words, there exists a matching such that if applied on the system at time slot $t$, we get $\tilde{\mat{x}}'(t)$ which is different from $\mat{x}'(t)$ in two elements $m$ and $n$ such that $x'_n(t) <\tilde{x}'_n(t)\leq \tilde{x}'_m(t)< x'_m(t)$ and the following constraints are satisfied; $\tilde{x}'_n(t)=x'_n(t)+1$ and $\tilde{x}'_m(t)=x'_m(t)-1$.
We call such a matching by $\mat{M}^{(\tilde{\pi})}(t)$. Note that in this case, each queue (other than queue $m$ and queue $n$) which is (resp. is not) receiving service under $\pi$ is (resp. is not) also receiving service under $\tilde{\pi}$. Queue $n$ is receiving service under $\pi$ and queue $m$ is not. Queue $n$ is not receiving service under $\tilde{\pi}$ and queue $m$ is. In this case, we let $\tilde{\mat{c}}(t)=\mat{c}(t)$ and $\tilde{\mat{a}}(t)=\mat{a}(t)$ and we apply $\mat{M}^{(\tilde{\pi})}(t)$ at time slot $t$. We let $\tilde{q}_{i,s^{(\tilde{\pi})}_i(t)}(t)=q_{i,s^{(\pi)}_i(t)}(t)$ and $\tilde{q}_{i,s^{({\pi})}_i(t)}(t)=q_{i,s^{(\tilde{\pi})}_i(t)}(t)$ for any queue $i$ which is receiving service under both matchings $\mat{M}^{({\pi})}(t)$ and $\mat{M}^{(\tilde{\pi})}(t)$. We also let $\tilde{q}_{m,s^{(\tilde{\pi})}_m(t)}(t)=q_{n,s^{(\pi)}_n(t)}(t)$, $\tilde{q}_{n,s^{({\pi})}_n(t)}(t)=q_{m,s^{(\tilde{\pi})}_m(t)}(t)$ and for other failure variables we let $\tilde{q}_{n,k}(t)=q_{n,k}(t)$. By such coupling of the service success/failure random variables we will consider the following cases for arrivals and service failures:
\begin{itemize}
\item If $q_{n,s^{(\pi)}_n(t)}(t)=0$, we let $\tilde{\mat{a}}(t)=\mat{a}(t)$ and therefore, $\tilde{\mat{x}}(t)=\mat{x}(t)$ which implies $\tilde{\mat{x}}(t)\preceq \mat{x}(t)$.
\item If $q_{n,s^{(\pi)}_n(t)}(t)=1$, then we can use the same coupling argument on the arrival processes as what we did in the proof of Lemma \ref{l3} and conclude that $\tilde{\mat{x}}(t) \preceq \mat{x}(t)$. We omit the argument to avoid redundant discussion.
\end{itemize}

Note that the obtained policy $\tilde{\pi}$ belongs to $\Pi_t^{h-1}$ as we applied a balancing server reallocation to the matching employed in $\pi$ at time slot $t$.

\textbf{Step 2: Construction of $\tilde{\pi}$ for $\tau > t$}:
In this step, the same as the proof of Lemma \ref{l3} we use mathematical induction to construct policy $\tilde{\pi}$ for $\tau > t$. Therefore, suppose that $\tilde{\mat{\omega}}$ and $\tilde{\pi}$ are constructed up to time slot $\tau$ ($\tau \geq t$) such that $\tilde{\mat{x}}(\tau) \preceq \mat{x}(\tau)$. Therefore, one of the conditions \textbf{D1}, \textbf{D2} and \textbf{D3} is satisfied for $\mat{x}(\tau)$ and $\tilde{\mat{x}}(\tau)$ as follows.


\textit{Case 1}: $\tilde{\mat{x}}(\tau) \leq \mat{x}(\tau)$. In this case, the construction of $\tilde{\mat{\omega}}$ and $\tilde{\pi}$ at time slot $\tau+1$ is straightforward. We let $\tilde{\mat{c}}(\tau+1)=\mat{c}(\tau+1)$, $\tilde{\mat{q}}(\tau+1)=\mat{q}(\tau+1)$, $\tilde{\mat{a}}(\tau+1)=\mat{a}(\tau+1)$ and policy $\mat{M}^{(\tilde{\pi})}(\tau+1)=\mat{M}^{(\pi)}(\tau+1) $. Thus, $\tilde{\mat{x}}(\tau+1) \leq \mat{x}(\tau+1)$ and therefore $\tilde{\mat{x}}(\tau+1) \preceq \mat{x}(\tau+1)$.

\textit{Case 2}: $\tilde{\mat{x}}(\tau)$ is obtained from $\mat{x}(\tau)$ by permutation of two distinct elements $m$ and $n$. In this case, we couple the random variables $\mat{c}(\tau+1)$, $\mat{a}(\tau+1)$ and construct matching $\mat{M}^{(\tilde{\pi})}(\tau+1)$ the same as what we did in Case $2$ of step $2$ in the proof of Lemma \ref{l3}. In addition to these settings, we let $\tilde{q}_{m,k}(\tau+1)=q_{n,k}(\tau+1)$ and $\tilde{q}_{n,k}(\tau+1)=q_{m,k}(\tau+1)$ for $k=1,2,...,K$ and also $\tilde{q}_{i,k}(\tau+1)=q_{i,k}(\tau+1)$ for all $i \in \cal N$, $i\neq n,m$ and $k=1,2,...,K$. By doing such a coupling, we conclude that $\tilde{\mat{x}}(\tau+1)$ and $\mat{x}(\tau+1)$ satisfy condition \textbf{D2} at time slot $\tau+1$ and therefore $\tilde{\mat{x}}(\tau+1) \preceq \mat{x}(\tau+1) $.

\textit{Case 3}: $\tilde{\mat{x}}(\tau)$ is obtained from $\mat{x}(\tau)$ by performing a balancing interchange of two distinct elements $m$ and $n$ as defined in condition \textbf{D3}. In particular, $\tilde{\mat{x}}(\tau)$ and $\mat{x}(\tau)$ are different in only two elements $n$ and $m$ such that $x_n(\tau) <\tilde{x}_n (\tau)\leq\tilde{x}_m (\tau)< x_m(\tau)$ and the following constraints are satisfied; $\tilde{x}_n(\tau)=x_n(\tau)+1$ and $\tilde{x}_m(\tau)=x_m(\tau)-1$.
In this case, with the same argument as what we did in the proof of Lemma \ref{l3}, we can check that $ \tilde{\mat{x}}(\tau+1)$ and $\mat{x}(\tau+1)$ satisfy one of the conditions \textbf{D1}-\textbf{D3}. We omit the details to avoid redundant discussion.

%
%
%
%

\end{proof}

\vspace{-2mm}
\subsection{Statement and Proof of Lemma \ref{lperm}} \label{plperm}
This lemma will be used in the proof of Lemma \ref{lnew}.
\begin{lem} \label{lperm}
Suppose that, at a given time slot $t$, multiple, distinct maximum weighted matchings exist in the graph of Figure \ref{matching-graph}. Any two of them will result in two intermediate queue length vectors which are \textit{equal in permutation}, i.e., one is a permutation of the other.
\end{lem}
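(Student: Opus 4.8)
The plan is to pass to the complete-bipartite, perfect-matching picture of Appendix \ref{l13} (pad $\mathcal{N}$ and $\mathcal{K}$ with dummy vertices joined to the other side by zero-weight edges), so that for two maximum weighted matchings $M_1,M_2$ the symmetric difference $M_1\triangle M_2$ is a disjoint union of even cycles. The intermediate length of queue $n$ under a matching $M$ is $\bigl(X_n(t-1)-\mathbf 1[\,n\text{ is matched by }M\text{ to a connected server}\,]\bigr)^+$, so the intermediate vector is determined, up to permutation, by the multiset $\{X_n(t-1): n\text{ served, }X_n(t-1)\ge 1\}$ of \emph{served backlogs}; moreover a queue lying on no cycle of $M_1\triangle M_2$ has the same incident edge in $M_1$ and $M_2$, hence is served the same way by both. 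Therefore it suffices to fix one cycle $C=n_1k_1n_2k_2\cdots n_Wk_W$ (read cyclically), with $M_1$ using the edges $n_\ell k_\ell$ and $M_2$ using $n_\ell k_{\ell-1}$, put $x_\ell=X_{n_\ell}(t-1)$, $\alpha_\ell=C_{n_\ell,k_\ell}(t)$, $\beta_\ell=C_{n_\ell,k_{\ell-1}}(t)\in\{0,1\}$, and show that $\{x_\ell:\alpha_\ell=1,x_\ell\ge1\}=\{x_\ell:\beta_\ell=1,x_\ell\ge1\}$ as multisets.

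Optimality of $M_1$ and $M_2$ immediately gives $w(M_1\cap C)=w(M_2\cap C)$, i.e. $\sum_\ell x_\ell\alpha_\ell=\sum_\ell x_\ell\beta_\ell$, but this by itself is too weak (it admits $\alpha,\beta$ with equal weighted sums yet different served multisets), so the core of the argument is a finer partial-rotation estimate. For any cyclic arc $n_i,\dots,n_j$ of $C$, rotating $M_1$ \emph{only along that arc} --- delete $n_\ell k_\ell$ for $\ell\in[i,j]$ and add $n_\ell k_{\ell-1}$ for $\ell\in[i+1,j]$ --- again produces a matching, and likewise for $M_2$; comparing weights against $M_1$ and $M_2$ yields
\begin{align*}
\sum_{\ell=i+1}^{j}x_\ell\beta_\ell \ \le\ \sum_{\ell=i}^{j}x_\ell\alpha_\ell,
\qquad
\sum_{\ell=i}^{j-1}x_\ell\alpha_\ell \ \le\ \sum_{\ell=i}^{j}x_\ell\beta_\ell .
\end{align*}
Equivalently, writing $T_\ell=\sum_{m\le\ell}x_m(\alpha_m-\beta_m)$ (so $T_0=T_W=0$), one has $-x_i\beta_i\le T_j-T_{i-1}\le x_j\alpha_j$ for all $i\ne j$.

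Finally I would decode these inequalities. Let $m^\ast=\min_\ell T_\ell\ (\le 0)$; evaluating the bound with $i-1$ (resp. $j$) at a minimizing index shows $\beta_i=0\Rightarrow T_{i-1}=m^\ast$ and $\alpha_j=0\Rightarrow T_j=m^\ast$. Together with $T_\ell-T_{\ell-1}=x_\ell(\alpha_\ell-\beta_\ell)$ this forces $(T_\ell)$ to consist of excursions above level $m^\ast$: each excursion is opened by a single step with $(\alpha_\ell,\beta_\ell)=(1,0)$ at some backlog $x_\ell=x>0$ (raising $T$ from $m^\ast$ to $m^\ast+x$), contains only $(1,1)$-steps at the constant level $m^\ast+x$, and is closed by a single step with $(\alpha_\ell,\beta_\ell)=(0,1)$ whose backlog is again exactly $x$; all queues with $(\alpha_\ell,\beta_\ell)\in\{(0,0),(1,1)\}$ are served identically by $M_1$ and $M_2$. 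Hence per excursion $M_1$ and $M_2$ each serve one extra queue, of the same backlog, so the served-backlog multisets coincide on $C$, and summing over all cycles of $M_1\triangle M_2$ (plus the queues on none of them) finishes the proof. The hard part is precisely this middle step: recognizing that per-component weight equality does not suffice, extracting the sub-arc exchange inequalities, and reading off the excursion structure from them.
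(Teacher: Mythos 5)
Your proof is correct, and it is worth recording how it relates to the one in Appendix~\ref{plperm}. The skeleton is the same: pass to the padded complete bipartite graph, note that two maximum weighted matchings differ on a disjoint union of even cycles, and play sub-arc exchanges against maximality. Where you diverge is in how the exchange inequalities are organized and decoded. The paper classifies the two edges incident to each queue of a cycle into types \textbf{T0}/\textbf{T1}/\textbf{T2}, starts from a queue served only by MWM1, traces forward to the \emph{first} non-\textbf{T0} pair, and eliminates all possibilities except a \textbf{T1} pair of equal backlog by exhibiting a strictly heavier matching; the fact that this pairing is a bijection (needed to pass from a one-way correspondence to equality of multisets) is left implicit, though it does follow since the "first non-\textbf{T0} successor" map is injective. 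You instead collect \emph{all} partial-rotation inequalities simultaneously, encode them as the two-sided bounds $-x_i\beta_i\le T_j-T_{i-1}\le x_j\alpha_j$ on the prefix sums $T_\ell=\sum_{m\le\ell}x_m(\alpha_m-\beta_m)$, and read off the excursion structure of the walk above its minimum $m^\ast$. I checked the index bookkeeping: both rotations do yield valid matchings, and both displayed inequalities follow. This buys an explicit bijection between the $M_1$-only and $M_2$-only served queues of equal backlog, and it cleanly disposes of the interleaved $(0,0)$ and $(1,1)$ pairs that the paper's trace-forward argument handles more informally; the price is the prefix-sum formalism and a couple of degenerate-arc checks (single-edge and full-cycle arcs, and minimizers coinciding with the index under consideration), all of which resolve trivially. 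One point you should state rather than assume: queues with $x_\ell=0$ contribute nothing to either the edge weights or the intermediate vector, so excluding them from the served-backlog multiset is harmless, and equality of the served sub-multisets together with the common backlog vector $\mat{x}(t-1)$ is exactly what produces the permutation asserted in the lemma.
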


\begin{proof}
We need to show that given the queue length vector $\mat{x}(t-1)$ at the beginning of time slot $t$, if we apply two distinct maximum weighted matchings $\mat{M}^{(\text{MWM}1)}(t)$ and $\mat{M}^{(\text{MWM}2)}(t)$ (i.e., when $\mat{M}^{(\text{MWM}1)}(t) \neq \mat{M}^{(\text{MWM}2)}(t)$), then the intermediate queue length vectors $\mat{x}'^{(1)}(t)$ and $\mat{x}'^{(2)}(t)$ resulting from these two matchings are permutations of each other, i.e., for each queue $n \in \mathcal{N}$

 
\begin{itemize}
\item[\textbf{a)}] either queue $n$ is being served by both matchings $\mat{M}^{(\text{MWM}1)}(t)$ and $\mat{M}^{(\text{MWM}2)}(t)$.
\item[\textbf{b)}] or if queue $n$ is being served under $\mat{M}^{(\text{MWM}1)}(t)$ but not under $\mat{M}^{(\text{MWM}2)}(t)$, then there exists a queue $m$ with $x_n(t)=x_{m}(t)$ which is being served under $\mat{M}^{(\text{MWM}2)}(t)$ but not under $\mat{M}^{(\text{MWM}1)}(t)$.
\end{itemize}

We invoke the graph theory analysis we used in the proof of Lemma \ref{l2}. Similarly we define a perfect graph $G'_t$ of size $\max\{N,K\}$ over which we define sub-graphs $G'^{(\text{MWM}1)}_t$ and $G'^{(\text{MWM}2)}_t$ corresponding to $\mat{M}^{(\text{MWM}1)}(t)$ and $\mat{M}^{(\text{MWM}2)}(t)$. We build two directed sub-graphs $D_t^{\text{MWM}1}$ and $D_t^{\text{MWM}2}$ using sub-graphs $G'^{(\text{MWM}1)}_t$ and $G'^{(\text{MWM}2)}_t$ as follows: $D_t^{\text{MWM}1}$ is the same as $G'^{(\text{MWM}1)}_t$ with \textit{positive} edges directed from queues to the servers with positive weights. $D_t^{\text{MWM}2}$ is the same as $G'^{(\text{MWM}2)}_t$ with \textit{negative} edges directed from servers to the queues. Similarly we define graph $U$ as the union of these two sub-graphs, i.e., $U=D^{(\text{MWM}1)}_t \bigcup D^{(\text{MWM}2)}_t$.  

In case ``\textbf{a)}" above, if queue $n$ is being served in both MWM1 and MWM2 then $x'^{(1)}_{n}(t)=x'^{(2)}_n(t)$. Therefore, we consider only case ``\textbf{b)}" and we show that if queue $n\in\cal N$ is being served under $\mat{M}^{(\text{MWM}1)}(t)$ but not under $\mat{M}^{(\text{MWM}2)}(t)$, then there exists a queue $m$ with $x_n(t)=x_{m}(t)$ which is being served under $\mat{M}^{(\text{MWM}2)}(t)$ but not under $\mat{M}^{(\text{MWM}1)}(t)$. We recall the definition of \textbf{Type 0 (T0)}, \textbf{Type 1 (T1)} and \textbf{Type 2 (T2)} edges pairs we had in the proof of Lemma \ref{l2}. If queue $n$ is being served under $\mat{M}^{(\text{MWM}1)}(t)$ but not under $\mat{M}^{(\text{MWM}2)}(t)$, then the edges incident to queue $n$ make a \textbf{T2} pair $r_{n}=(0,x_n(t-1))$. Recall that the graph $U$ is the union of a number of even cycles. Assume that queue $n$ belongs to cycle $\ell$ in graph $U$.
We now trace forward over cycle $\ell$, as shown in Figures \ref{c:1} and \ref{c:2}. The edge pairs after queue $n$ cannot be all \textbf{T0} pairs, since by using the allocations of MWM1 in MWM2 for the queues in $\ell$, we can increase the matching weight index of MWM2 which contradicts the fact that MWM2 is a maximum weighted matching. Thus, we consider the following two cases:
\begin{figure}
  \centering
  	\psfrag{l}[][][1]{$0$}
    \psfrag{m}[][][.9]{$x_n(t-1)$}
    \psfrag{n}[][][.9]{$-x_{m}(t-1)$}
    \psfrag{o}[][][1]{$0$}
    \psfrag{p}[][][1]{$ $}
    \psfrag{q}[][][1]{$ $}
    \psfrag{r}[][][1]{$n$}
    \psfrag{s}[][][1]{$m$}
    \psfrag{t}[][][1]{$T_2$}
    \psfrag{u}[][][1]{$T_0$}
    \psfrag{v}[][][1]{$T_1$}
    \psfrag{a}[][][1]{$0$}
    \psfrag{b}[][][.9]{$x_n(t-1)$}
    \psfrag{d}[][][.9]{$x_{m}(t-1)$}
    \psfrag{c}[][][1]{$0$}
    \psfrag{e}[][][1]{$ $}
    \psfrag{f}[][][1]{$ $}
    \psfrag{g}[][][1]{$n$}
    \psfrag{h}[][][1]{$m$}
    \psfrag{i}[][][1]{$T_2$}
    \psfrag{j}[][][1]{$T_0$}
    \psfrag{k}[][][1]{$T_2$}
  \subfloat[Case 1]{\label{c:1}\includegraphics[width=0.33\textwidth]{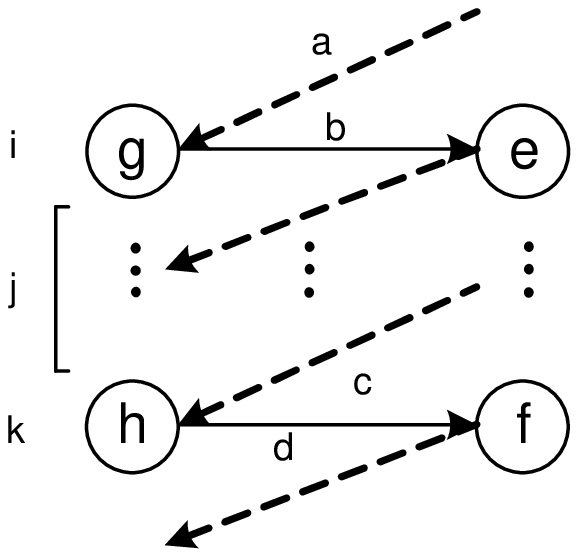}}              
  \subfloat[Case 2]{\label{c:2}\includegraphics[width=0.33\textwidth]{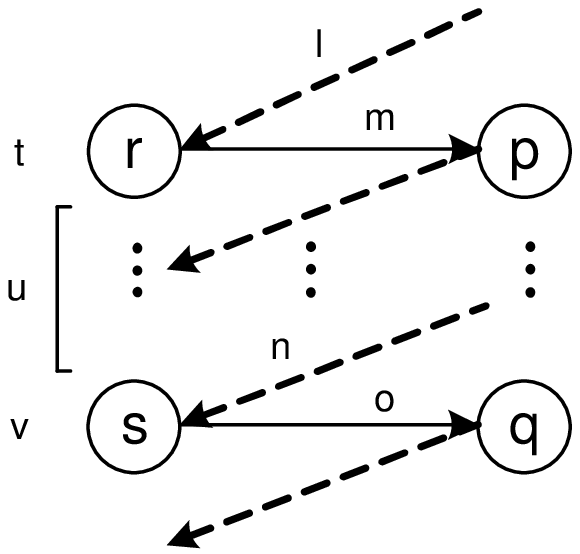}}
  \caption{Cases 1 and 2 in the proof of Lemma \ref{lperm}}\vspace{-7mm}
  \label{examples_lnew}
\end{figure}


\textit{Case 1}: Assume that the first non-\textbf{T0} pair after $r_{n}$ is a \textbf{T2} pair denoted by $r_{m}$. 
This case is shown in Figure \ref{c:1}. In this case, by using the allocations used in MWM1 for queues $n$ to the one right before queue $m$ in cycle $\ell$ in MWM2 and not serving queue $m$, we will obtain a matching whose matching weight index is larger than that of MWM2. This contradicts the fact that MWM2 is a maximum weighted matching.

\textit{Case 2}: Assume that the first non-\textbf{T0} pair after $r_{n}$ is a \textbf{T1} pair denoted by $r_{m}$. This case is shown in Figure \ref{c:2}. In this case, if $x_{m}(t-1)>x_{n}(t-1)$, by using the allocations used in MWM2 for the queues right after $n$ in the cycle $\ell$ to queue $m$ in MWM1 and not serving queue $n$, we obtain a matching whose matching weight index is larger than that of MWM1. This contradicts the fact that MWM1 is a maximum weighted matching. If $x_{m}(t-1)<x_{n}(t-1)$, by using the allocations used in MWM1 for the queues $n$ to the queue right before queue $m$ in cycle $\ell$ in MWM2 and not serving queue $m$, we will obtain a matching whose matching weight index is larger than that of MWM2. This contradicts the fact that MWM2 is a maximum weighted matching. Thus, the only valid case is $x_{m}(t-1)=x_{n}(t-1)$. In this case, either queue $n$ and $m$ are the same (we have covered this case where queue $n$ is served in both MWM1 and MWM2) or queue $n$ and queue $m$ are different in which case the result follows.
\end{proof}

%
%
%
%

\vspace{-2mm}
\subsection{Proof of Lemma \ref{lnew}} \label{plnew}
To show this lemma, we show that any two MWM policy dominate each other, i.e., for any $\pi_1, {\pi}_2 \in \Pi^{\text{MWM}}$, we have $f({\mat{X}}^{(\pi_1)}) \leq_{st} f({\mat{X}}^{(\pi_2)})$ and $f({\mat{X}}^{(\pi_2)}) \leq_{st} f({\mat{X}}^{(\pi_1)}) $. Therefore, according to the definition of ``$\leq_{st}$'', we can conclude that $f({\mat{X}}^{(\pi_1)})$ and $f({\mat{X}}^{(\pi_2)})$ are equal in distribution, i.e., $f({\mat{X}}^{(\pi_1)}) \buildrel \mathcal{D}\over = f({\mat{X}}^{(\pi_2)})$. In order to do so, we will construct a sequence of policies $\tilde{\pi}_1,\tilde{\pi}_2,\dots, \tilde{\pi}_k $ such that $\lim_{k\rightarrow \infty} \tilde{\pi}_k=\pi_2$ and \textit{for all} $k=1,2,\dots$, $f({\mat{X}}^{(\tilde{\pi}_k)}) \buildrel \mathcal{D}\over = f({\mat{X}}^{(\pi_1)})$.

Consider any arbitrary policies $\pi_1, \pi_2 \in \Pi^{\text{MWM}}$. Assume that time slot $t_0$ is the first time slot at which the two policies employ different matching matrices, i.e., we have $\mat{M}^{(\pi_1)}(\tau)=\mat{M}^{(\pi_2)}(\tau)$, $\forall \tau<t_0$ and $\mat{M}^{(\pi_1)}(t_0)\neq \mat{M}^{({\pi}_2)}(t_0)$.
For the system using policy $\pi_1$, for any sample path $\mat{\omega}=(\mat{x}^{(\pi_1)}(0),\mat{c}(1),\mat{a}(1),$ $\mat{x}^{(\pi_1)}(1),\mat{c}(2),\mat{a}(2),\mat{x}^{(\pi_1)}(2),\dots)$ of the underlying random variables $(\mat{X}^{(\pi_1)}(0),\mat{C}(1),\mat{A}(1),\mat{X}^{(\pi_1)}(1),$ $\mat{C}(2),$ $\mat{A}(2),\mat{X}^{(\pi_1)}(2), \dots)$, we can construct a new sample path $\tilde{\mat{\omega}} = ({\mat{x}}^{(\tilde{\pi}_1)}(0), \tilde{\mat{c}}(1), \tilde{\mat{a}}(1), {\mat{x}}^{(\tilde{\pi}_1)}(1),$ $\tilde{\mat{c}}(2), \tilde{\mat{a}}(2),{\mat{x}}^{(\tilde{\pi}_1)}(2), \dots)$ and a new MWM policy $\tilde{\pi}_1$ such that ${\mat{x}}^{(\tilde{\pi}_1)}(t)\buildrel p\over = \mat{x}^{(\pi_1)}(t)$ for all $t= 1,2,\dots$ and $\mat{M}^{(\tilde{\pi}_1)}(\tau)=\mat{M}^{(\pi_2)}(\tau)$, $\forall \tau<t_0+1$. It is important to observe here that policy $\tilde\pi_1$, unlike policy $\pi_1$, uses the same matchings as $\pi_2$ until time $t_0$. The construction of $\tilde\pi_1$ is done as follows:
%
%

\textbf{Construction of $\tilde{\pi}_1$ for $\tau<t_0$}:
We let the arrival, connectivity and the matchings of the new system (which is working under $\tilde{\pi}_1$) be the same as those under $\pi_1$ until time slot $t_0-1$, i.e., $\tilde{\mat{c}}(\tau) =\mat{c}(\tau)$, $\tilde{\mat{a}}(\tau) =\mat{a}(\tau)$ and $\mat{M}^{(\tilde{\pi}_1)}(\tau)=\mat{M}^{(\pi_1)}(\tau)$ for $\tau < t_0$. 
Thus, we have $\mat{x}^{(\tilde{\pi}_1)}(\tau)=\mat{x}^{(\pi_1)}(\tau)$ for $\tau<t_0$. 

\textbf{Construction of $\tilde{\pi}_1$ for $\tau=t_0$}:
At time slot $t_0$, we let $\tilde{\mat{c}}(t_0) =\mat{c}(t_0)$ and $\mat{M}^{(\tilde{\pi}_1)}(t_0) = \mat{M}^{(\pi_2)}(t_0)$. Since $\mat{M}^{(\pi_1)}$ and $\mat{M}^{(\pi_2)}$ are both maximum weighted matchings, according to Lemma \ref{lperm}, the intermediate queue lengths resulted from $\mat{M}^{(\pi_1)}$ and $\mat{M}^{(\pi_2)}$ are permutation of each other, i.e.,
 ${\mat{x}'}^{(\tilde{\pi}_1)}(t_0)=({x'}_1^{(\tilde{\pi}_1)}(t_0), {x'}_2^{(\tilde{\pi}_1)}(t_0),\dots,{x'}_N^{(\tilde{\pi}_1)}(t_0))\buildrel {p}\over = ({x'}_1^{({\pi}_1)}(t_0), {x'}_2^{({\pi}_1)}(t_0),\dots,{x'}_N^{({\pi}_1)}(t_0)) ={\mat{x}'}^{(\pi_1)}(t_0)$. In other words, for each element ${x'}_n^{(\tilde{\pi}_1)}(t_0)$ in ${\mat{x}'}^{(\tilde{\pi}_1)}(t_0)$ there exists an element ${x'}_{m}^{({\pi}_1)}(t_0)$ in ${\mat{x}'}^{({\pi}_1)}(t_0)$ such that ${x'}_n^{(\tilde{\pi}_1)}(t_0)={x'}_{m}^{({\pi}_1)}(t_0)$.
We call queue $m$ and queue $n$ permuted queues. 
In the new system (which is working under $\tilde{\pi}_1$), we let the arrivals of all the permuted queues to be the same at time slot $t_0$, i.e., if queues $n$ and $m$ are two permuted queues we let $\tilde{a}_n(t_0)=a_m(t_0)$.
Thus, for the queue length state of the system at time slot $t_0$, we can easily observe that $\mat{x}^{(\tilde{\pi}_1)}(t_0)\buildrel p\over = \mat{x}^{(\pi_1)}(t_0)$. 

\textbf{Construction of $\tilde{\pi}_1$ for $\tau>t_0$}:
For time slots $\tau > t_0$, we let the connectivities, arrivals and the allocation variables of all the permuted queues to be the same. Thus, if queues $n$ and $m$ are two permuted queues, we let $\tilde{c}_{n,k}(\tau)=c_{m,k}(\tau)$ $\forall k=1,2,\dots,K$, $\tilde{a}_{n}(\tau)=a_{m}(\tau)$ and ${M}_{n,k}^{(\tilde{\pi}_1)}(\tau) ={M}_{m,k}^{({\pi}_1)}(\tau) $ $\forall k=1,2,\dots,K$. Therefore, the service and the evolution of all the permuted queues in the original system and the new system are the same. Since the employed matchings (allocation variables) in policy $\pi_1$ are all maximum weighted matchings, the allocation variables in the new system define a maximum weighted matching.
We can now conclude that the queue length vectors ${\mat{x}}^{(\tilde{\pi}_1)}(\tau)$ and $\mat{x}^{({\pi}_1)}(\tau)$, $\tau>t_0$, are permutations of each other and therefore, $\mat{x}^{(\tilde{\pi}_1)}(\tau)\buildrel p\over = \mat{x}^{(\pi_1)}(\tau)$.

According to Definition \ref{def_order}, we conclude that $f(\mat{x}^{(\pi_1)}(t))=f(\mat{x}^{(\tilde{\pi}_1)}(t))$ for all $t$. Thus, the two policies $\pi_1$ and $\tilde{\pi}_1$ are dominating each other, i.e., $f(\mat{X}^{(\pi_1)})\leq_{st}f(\mat{X}^{(\tilde{\pi}_1)})$ and $ f(\mat{X}^{(\tilde{\pi}_1)})\leq_{st}f(\mat{X}^{(\pi_1)})$ or $f(\mat{X}^{(\pi_1)}) \buildrel \mathcal{D}\over = f(\mat{X}^{(\tilde{\pi}_1)})$. Recall that $\pi_1$ and $\pi_2$ are using similar matchings \textit{until time slot} $t_0-1$. 
We constructed a new MWM policy $\tilde{\pi}_1$ that 
agrees with $\pi_2$ \textit{until time slot} $t_0$ and it still results in the same queue length cost distribution as $\pi_1$, i.e., $f(\mat{X}^{(\tilde{\pi}_1)})\buildrel \mathcal{D} \over = f(\mat{X}^{(\pi_1)})$


By using a mathematical induction approach, we can construct a sequence of policies $\tilde{\pi}_2,$ $\tilde{\pi}_3,\dots,$ whose queue length cost $f(\mat{X})$ is equal in distribution to that of policy $\pi_1$ and are using similar maximum weighted matchings as $\pi_2$ is using at time slots $t_0+1, t_0+2, \dots$. The limiting policy for this sequence of policies is policy $\pi_2$. Therefore, $f(\mat{X}^{(\pi_1)}) \buildrel \mathcal{D}\over = f(\mat{X}^{(\tilde{\pi}_1)})  \buildrel \mathcal{D}\over = f(\mat{X}^{(\tilde{\pi}_2)}) \buildrel \mathcal{D}\over = \dots \buildrel \mathcal{D}\over = f(\mat{X}^{(\pi_2)})$.

\vspace{-5mm}
\bibliographystyle{ieeetran}

\bibliography{Ref}

\begin{thebibliography}{10}
\providecommand{\url}[1]{#1}
\csname url@samestyle\endcsname
\providecommand{\newblock}{\relax}
\providecommand{\bibinfo}[2]{#2}
\providecommand{\BIBentrySTDinterwordspacing}{\spaceskip=0pt\relax}
\providecommand{\BIBentryALTinterwordstretchfactor}{4}
\providecommand{\BIBentryALTinterwordspacing}{\spaceskip=\fontdimen2\font plus
\BIBentryALTinterwordstretchfactor\fontdimen3\font minus
  \fontdimen4\font\relax}
\providecommand{\BIBforeignlanguage}[2]{{%
\expandafter\ifx\csname l@#1\endcsname\relax
\typeout{** WARNING: IEEEtran.bst: No hyphenation pattern has been}%
\typeout{** loaded for the language `#1'. Using the pattern for}%
\typeout{** the default language instead.}%
\else
\language=\csname l@#1\endcsname
\fi
#2}}
\providecommand{\BIBdecl}{\relax}
\BIBdecl

\bibitem{Tassiulas92}
L.~Tassiulas and A.~Ephremides, ``Stability properties of constrained queueing
  systems and scheduling policies for maximum throughput in multihop radio
  networks,'' \emph{IEEE Trans.\ Auto.\ Control}, vol.~37, no.~12, pp.
  1936--1949, Dec. 1992.

\bibitem{Tassiulas93}
------, ``Dynamic server allocation to parallel queues with randomly varying
  connectivity,'' \emph{IEEE Trans.\ Inform.\ Theory}, vol.~39, no.~2, pp.
  466--478, Mar. 1993.

\bibitem{sarkar}
K.~Kar, X.~Luo, and S.~Sarkar, ``Throughput-optimal scheduling in multichannel
  access point networks under infrequent channel measurements,'' \emph{IEEE
  Trans.\ Wireless Comm.}, vol.~7, no.~7, pp. 2619--2629, July 2008.

\bibitem{Now}
L.~Georgiadis, M.~J. Neely, and L.~Tassiulas, \emph{Resource Allocation and
  Cross Layer Control in Wireless Networks}.\hskip 1em plus 0.5em minus
  0.4em\relax Now Publisher, 2006.

\bibitem{mckeown}
N.~McKeown, A.~Mekkittikul, V.~Anantharam, and J.~Walrand, ``On achieving 100\%
  throughput in an input-queued switch,'' \emph{IEEE Trans.\ Commun.}, vol.~47,
  no.~8, pp. 1260--1272, Aug. 1999.

\bibitem{Tassiulas97}
L.~Tassiulas, ``Scheduling and performance limits of networks with constantly
  changing topology,'' \emph{IEEE Trans.\ Inform.\ Theory}, vol.~43, no.~3, pp.
  1067--1073, May 1997.

\bibitem{Tassiulas98}
------, ``Linear complexity algorithms for maximum throughput in radio networks
  and input queued switches,'' in \emph{Proc. of IEEE INFOCOM}, San Francisco,
  CA, USA, Apr. 1998.

\bibitem{Leonardi}
E.~Leonardi, M.~Mellia, F.~Neri, and M.~A. Marsan, ``Bounds on average delays
  and queue size averages and variances in input-queued cell-based switches,''
  in \emph{Proc. of IEEE INFOCOM}, Anchorage, AK, USA, Apr. 2001.

\bibitem{Lin06atutorial}
X.~Lin, N.~B. Shroff, and R.~Srikant, ``A tutorial on cross-layer optimization
  in wireless networks,'' \emph{IEEE Journal on Selected Areas in
  Communications}, vol.~24, no.~8, pp. 1452--1463, August 2006.

\bibitem{bambos2002}
N.~Bambos and G.~Michailidis, ``On parallel queueing with random server
  connectivity and routing constraints,'' \emph{Probability in Engineering and
  Informational Sciences}, vol.~16, pp. 185--203, 2002.

\bibitem{Tsibonis2004}
V.~Tsibonis, L.~Georgiadis, and L.~Tassiulas, ``Exploiting wireless channel
  state information for throughput maximization,'' \emph{IEEE Transactions on
  Information Theory}, vol.~50, no.~11, pp. 2566--2582, November 2004.

\bibitem{Luo1999}
W.~Luo and A.~Ephremides, ``Stability of n interacting queues in random-access
  systems,'' \emph{IEEE Transactions on Information Theory}, vol.~45, no.~5,
  pp. 1579--1587, July 1999.

\bibitem{Pappas2010}
N.~Pappas, A.~Traganitis, and A.~Ephremides, ``Stability and performance issues
  of a relay assisted multiple access scheme,'' in \emph{Proc. of GLOBECOM
  2010}, Miami, FL, USA, Dec. 2010.

\bibitem{Naware2005}
V.~Naware, G.~Mergen, and L.~Tong, ``Stability and delay of finite-user slotted
  aloha with multipacket reception,'' \emph{IEEE Transactions on Information
  Theory}, vol.~51, no.~7, pp. 2636--2656, July 2005.

\bibitem{hassan_isit10}
H.~Halabian, I.~Lambadaris, and C.-H. Lung, ``Network capacity region of
  multi-queue multi-server queueing system with time varying connectivities,''
  in \emph{Proc. of IEEE Int. Symp. on Inform. Theory (ISIT'10)}, Austin, TX,
  USA, June 2010.

\bibitem{hassan_isit11}
------, ``On the stability region of multi-queue multi-server queueing systems
  with stationary channel distribution,'' in \emph{Proc. of IEEE Int. Symp. on
  Inform. Theory (ISIT'11)}, Saint Petersburg, Russia, Aug. 2011.

\bibitem{hassan-milcom2010}
H.~Halabian, I.~Lambadaris, C.-H. Lung, and A.~Srinivasan, ``Throughput-optimal
  relay selection in multiuser cooperative relaying networks,'' in \emph{IEEE
  MILCOM 2010}, San Jose, CA, USA, Nov. 2010.

\bibitem{v3}
V.~Bhandari and N.~H. Vaidya, ``Scheduling in multi-channel wireless
  networks,'' in \emph{11th International Conference on Distributed Computing
  \& Networking (ICDNC 2010)}, Kolkata, India, Jan. 2010.

\bibitem{v4}
S.~Merlin, N.~Vaidya, and M.~Zorzi, ``Resource allocation in multi-radio
  multi-channel multi-hop wireless networks,'' in \emph{IEEE INFOCOM'08},
  Phoenix, AZ, USA, Apr. 2008.

\bibitem{bambos95}
N.~Bambos and G.~Michailidis, ``On the stationary dynamics of parallel queues
  with random server connectivities,'' in \emph{Proc. of 34th Conference on
  Decision and Control (CDC'95)}, New Orleans, LA , USA, Dec. 1995.

\bibitem{Koole2001}
G.~Koole, Z.~Liu, and R.~Righter, ``Optimal transmission policies for noisy
  channels,'' \emph{Operations Research}, vol.~49, no.~6, pp. 892--899, Nov.
  2001.

\bibitem{javidi2}
T.~Javidi, ``Rate stable resource allocation in ofdm systems: from waterfilling
  to queue-balancing,'' in \emph{Proc. Allerton Conference on Communication,
  Control, and Computing}, Oct. 2004.

\bibitem{ganti}
A.~Ganti, E.~Modiano, and J.~N. Tsitsiklis, ``Optimal transmission scheduling
  in symmetric communication models with intermittent connectivity,''
  \emph{IEEE Trans.\ Inform.\ Theory}, vol.~53, no.~3, pp. 998--1008, Mar.
  2007.

\bibitem{javidi}
S.~Kittipiyakul and T.~Javidi, ``Delay-optimal server allocation in multi-queue
  multi-server systems with time-varying connectivities,'' \emph{IEEE Trans.\
  Inform.\ Theory}, vol.~55, no.~5, pp. 2319--2333, May 2009.

\bibitem{hussein}
H.~Al-Zubaidy, I.~Lambadaris, and I.~Viniotis, ``Optimal resource scheduling in
  wireless multi-service systems with random channel connectivity,'' in
  \emph{Proc. of IEEE GLOBECOM 2009}, Honolulu, HI, USA, Nov. 2009.

\bibitem{Liu2003}
X.~Liu, E.~Chong, and N.~Shroff, ``Optimal opportunistic scheduling in wireless
  networks,'' in \emph{Proc. of IEEE 58th Vehicular Technology Conference},
  Orlando, FL, USA, Oct. 2003.

\bibitem{Agrawal2002}
R.~Agrawal and V.~Subramanian, ``Optimality of certain channel aware scheduling
  policies,'' in \emph{Proc. of 40th Annual Allerton Conference on
  Communication, Control, and Computing}, Monticello, Illinois, USA, Oct. 2002.

\bibitem{atila}
A.~Eryilmaz and R.~Srikant, ``Fair resource allocation in wireless networks
  using queue-length based scheduling and congestion control,'' in \emph{Proc.
  of IEEE INFOCOM'05}, Miami, FL, USA, Mar. 2005.

\bibitem{Stolyar2005}
A.~Stolyar, ``On the asymptotic optimality of the gradient scheduling algorithm
  for multiuser throughput allocation,'' \emph{Operations Research}, vol.~53,
  no.~1, pp. 12--25, Jan. 2005.

\bibitem{Lu1999}
S.~Lu, V.~Bharghavan, and R.~Srikant, ``Fair scheduling in wireless packet
  networks,'' \emph{IEEE/ACM Transactions on Networking}, vol.~7, no.~4, pp.
  473--489, 1999.

\bibitem{hungarian}
H.~W. Kuhn, ``The hungarian method for the assignment problem,'' \emph{Naval
  Research Logistic Quarterly}, pp. 2:83--97, 1955.

\bibitem{stoyan}
D.~Stoyan, \emph{Comparison Methods for Queues and other Stochastic
  Models}.\hskip 1em plus 0.5em minus 0.4em\relax Chichester: J. Wiley and
  Sons, 1983.

\bibitem{ross}
S.~M. Ross, \emph{Stochastic Processes, 2nd ed.}\hskip 1em plus 0.5em minus
  0.4em\relax New York: J. Wiley and Sons, 1996.

\bibitem{Lindvall}
T.~Lindvall, \emph{Lectures on the coupling method}.\hskip 1em plus 0.5em minus
  0.4em\relax New York: Wiley, 1992.

\bibitem{lidl}
R.~Lidl and G.~Pilz, \emph{Applied abstract algebra, 2nd edition}.\hskip 1em
  plus 0.5em minus 0.4em\relax New York: Springer, 1998.

\end{thebibliography}

\end{document}